\documentclass[11pt,letterpaper, reqno]{amsart}
\usepackage{graphicx} 
\usepackage{xcolor}
\usepackage{amsmath, amssymb, amsthm, amsrefs}
\usepackage{soul}
\usepackage{tikz}
\usepackage[colorlinks=true, pdfstartview=FitV, linkcolor=magenta, citecolor=blue, urlcolor=blue]{hyperref}
\usepackage[capitalise]{cleveref}
\usepackage{booktabs}
\usepackage[normalem]{ulem}
\usepackage[foot]{amsaddr}

\newtheorem{thm}{Theorem}[section]   
\newtheorem{cor}[thm]{Corollary}     
\newtheorem{lem}[thm]{Lemma}         
\newtheorem{prop}[thm]{Proposition}  

\theoremstyle{definition} 
\newtheorem{defn}[thm]{Definition}   
\newtheorem{conj}[thm]{Conjecture}        
\newtheorem{ex}[thm]{Example}        
\newtheorem{rmk}[thm]{Remark}
\newtheorem{question}[thm]{Question}

\Crefname{lem}{Lemma}{Lemmas}
\Crefname{defn}{Definition}{Definitions}
\Crefname{thm}{Theorem}{Theorems}
\Crefname{prop}{Proposition}{Propositions}

\DeclareMathOperator{\im}{im}

\DeclareMathOperator{\tk}{Tk}

\DeclareMathOperator{\gens}{gens}

\DeclareMathOperator{\sep}{sep}
\DeclareMathOperator{\brank}{brank}
\DeclareMathOperator{\mrank}{mrank}
\DeclareMathOperator{\supp}{supp}

\def\B{\mathbb{B}}
\def\F{\mathbb{F}}
\def\N{\mathbb{N}}

\def\bc{\mathbf{c}}
\def\bd{\mathbf{d}}

\def\bx{\mathbf{x}}
\def\by{\mathbf{y}}

\def\bA{\mathbf{A}}
\def\bB{\mathbf{B}}
\def\bC{\mathbf{C}}
\def\bD{\mathbf{D}}
\def\bH{\mathbf{H}}
\def\bI{\mathbf{I}}

\def\bP{\mathbf{P}}
\def\bT{\mathbf{T}}

\def\bV{\mathbf{V}}
\def\bX{\mathbf{X}}

\def\cC{\mathcal{C}}
\def\cD{\mathcal{D}}
\def\cE{\mathcal{E}}

\def\cV{\mathcal{V}}

\def\pcode{\mathbf{P_{Code}}}
\def\code{\mathbf{Code}}

\title{Adjoints of Morphisms of Neural Codes}
\author{Juliann Geraci$^1$}
\author{Alexander B. Kunin$^{2}$}
\author{Alexandra Seceleanu$^1$}
\address[1]{Department of Mathematics, University of Nebraska -- Lincoln}
\address[2]{Department of Mathematics, Creighton University}
\email{jgeraci2@huskers.unl.edu}
\email{alexkunin@creighton.edu}
\email{aseceleanu@unl.edu}
\date{March 10, 2026}

\begin{document}

\begin{abstract} 
A combinatorial code $\cC$ is a collection of subsets of $[n]$, or equivalently a set of points in $\{0,1\}^n$.
    A morphism of codes is a map from one combinatorial code to another such that the coordinates of points in the image can be expressed as products of coordinates in the domain.
    By representing morphisms of codes as binary matrices, we show that any morphism of codes is part of a Galois connection where its adjoint is boolean multiplication by the representative matrix. We use this to characterize those morphisms of codes which allow to factor a boolean matrix, with applications to estimating boolean matrix rank. 
    
    Morphisms also induce a partial order on (isomorphism classes of) codes.
    We determine the covering relations in this partial order for which the two adjoint maps are mutual inverses in terms of \emph{free} neurons, a combinatorial condition on the index corresponding to the covering maps.
    We introduce the \emph{defect} of a code as a new tool to study this poset and show that defect decreases by exactly 0 or 1 under a covering map.
\end{abstract}
\maketitle

\setcounter{tocdepth}{1}
\tableofcontents

\section{Introduction}

The neural ring was introduced by Curto, Itskov, Veliz-Cuba, and Youngs to study the intrinsic structure of combinatorial neural codes~\cite{neuralring13}.
A combinatorial neural code is an affine variety the field of order 2, which we denote $\F$, and the neural ring is the coordinate ring of this variety.
The ``intrinsic structure'' is encoded by a specific set of minimal generators of the vanishing ideal of the variety. The goal of the authors was to use this intrinsic structure to identify which combinatorial codes arise as intersection patterns of convex sets in some Euclidean space (see, e.g.~\cites{curto2017whatconvex, curto2019algebraic}).

Towards this goal, Jeffs introduced a class of maps between codes, dubbed morphisms of neural codes, such that the image of a convex code is convex~\cite{morphisms}. This induces a graded partial order on isomorphism classes of codes, denoted $\pcode$, such that convex codes form a down-set.
In this article, we present some advancements in the understanding of this partial order and apply it to the problem of Boolean matrix factorization.

Boolean matrix factorization takes as input a given $m\times n$ binary matrix $\bC$ and asks for a pair of matrices $\bV, \bH$ such that $\bC = \bV\bH$, where addition and multiplication are carried out over the Boolean semiring $\B$ (that is, $\B = \{0,1\}$ equipped with the operations $\vee$ and $\wedge$ of Boolean logic). We observed that, in some but not all cases, reading the map from rows of $\bC$ to rows of $\bV$ yielded a morphism of codes.
Indeed, the form of $\bV$ in those cases had been described as early as the 1950s \cites{luce1952note,plemmons1971generalized} in the context of solving the equation $\bC = \bV\bH$ for $\bV$.
Conversely, writing the domain and image of a morphism of codes in matrix form sometimes, but not always, results in a Boolean matrix factoriation.

We set out to characterize those factorizations which correspond to morphisms of codes. To this end, we find two complementary interpretations of a given $r \times n$ matrix $\bH$: on the one hand, boolean multiplication by $\bH$ on the right yields a map $\B^r \to \B^n$. On the other hand, interpreting $\bH$ as a monomial map between neural rings, it is the pullback of a morphism of codes $\F^n \to \F^r$.
These maps are adjoint in the sense that they form a Galois connection, and this forms the basis for the present work.

We present four main results.
First, we show that the canonical form of the intersection completion of a code is a subset of the canonical form of the original code (\cref{thm:intersectioncompletionviaCF}).
A code is intersection-complete if for any pair of codewords their coordinate-wise product is also a codeword.
Intersection-complete codes have held a distinguished place in the study of combinatorial codes: they were among the first class of codes to be known to be convex~\cite{cruz2019open}, every intersection-complete code is image of a simplicial complex under a morphism of codes~\cite{morphisms}, and the vanishing ideal of an intersection-complete code is generated by monomials and binomials~\cite{curto2019algebraic}. Building on recent results of Geller and R.G.~\cite{Geller2024}, we show that the subset of monomial and binomial generators in the ideal of an arbitrary code generates the vanishing ideal of the intersection completion of that code.

Second and third, we show that the pair of maps defined by a matrix $\bH$ form a Galois connection (\cref{lem:galoisconnection}) and characterize for which covering relations in $\pcode$ this connection is a bijection (\cref{thm:coveringBMF}).
These are morphisms for which the inverse can be expressed by matrix multiplication, partially resolving our goal. The characterization we give is in terms of \emph{free} neurons, a combinatorial property which can be checked in time linear in the size of a matrix representing a code (\cref{lem:redundancylemma}).

Fourth and finally, we introduce the \emph{defect} of a code, an isomorphism invariant which measures deviation from intersection completeness and induces a weak bigrading on $\pcode$ (\cref{thm:bigrading}).
Further, bijective morphisms of codes (other than isomorphisms) strictly decrease defect.

The organization of the paper is as follows:
In \cref{sec:preliminaries} we establish the terminology, notation, and basic results we will use regarding combinatorial neural codes, with several technical lemmas regarding trunks of combinatorial codes deferred to \cref{sec:trunklemmas}.
\Cref{sec:canonicalforms} is a brief digression to prove \cref{thm:intersectioncompletionviaCF} which is of independent significance.
\Cref{sec:morphismsasmatrices} establishes the Galois connection between morphisms of codes and matrix multiplication. The properties of this connection are expanded on in \cref{sec:adjoints};
in particular in \cref{sec:adjointsofcovering} we show which covering maps in $\pcode$ correspond to matrix factorizations.
In \cref{sec:defect} we define defect and show it is weakly order-preserving on $\pcode$.
\Cref{sec:booleanrank} applies our results to the problem of computing Boolean rank. \Cref{sec:conclusion} concludes with a brief discussion of our results and several open questions.

\section{Preliminaries}\label{sec:preliminaries}
    In this section we establish our terminology and notation, and review the relevant results on combinatorial codes.

    By a \emph{combinatorial code} we mean a collection $\cC \subseteq 2^{[n]}$ of subsets of $[n] := \{1,\dots,n\}$. We refer to elements of a code as \emph{codewords} and the elements of the ground set $[n]$ as \emph{neurons}, as our work builds on that of Curto et al.~\cite{neuralring13} and Jeffs~\cites{morphisms,jeffs2019sunflowers}.
    We can also view a code as a set of binary vectors $\cC \subseteq \{0,1\}^n$ by encoding a vector via its \emph{support}: for $\bc = (c_1,\dots,c_n) \in \{0,1\}^n$, $\supp(\bc) := \{i \in [n] \mid c_i = 1\}$.
    Note that $2^{[n]}$ comes equipped with the partial order $\subseteq$ of set containment. 
    We can order $\{0,1\}^n$ by setting $0 \leq 1$ and then setting $\bc \leq \bd$ if $c_i \leq d_i$ for all $i$. Under this order, $\bc \leq \bd$ if and only if $\supp(\bc) \subseteq \supp(\bd)$.

    We translate between codes and matrices by writing each codeword as a row of the matrix. Thus, a code $\cC = \{\sigma_1,\dots,\sigma_m\}$ consisting of $m$ subsets of $[n]$ specifies an $m \times n$ matrix
    \[ \bC = [c_{ij}] \quad\text{with}\quad c_{ij} = \begin{cases}
        1 & j \in \sigma_i\\
        0 & j \notin \sigma_i.
    \end{cases}\]
    Since a given matrix may have repeated rows, however, an $m \times n$ matrix $\bC$ might specify a code with fewer than $m$ codewords.

    We adopt the following notational conventions: When discussing a code as a set system, it is denoted with calligraphic capital letters, e.g.\ $\cC = \{\sigma_1,\dots,\sigma_m\} \subseteq 2^{[n]}$. As here, lowercase Greek letters will denote subsets of $[n]$. When clear from context, we will drop the set braces when discussing sets of one or two elements, i.e.\ we write $i$ for $\{i\}$ or $ij$ for $\{i,j\}$ in order to reduce notational clutter.
        
    Binary matrices and vectors are denoted with bold letters, e.g.\ $\bC \in \{0,1\}^{m\times n}$.
    Given a vector $\bc = (c_1,\dots,c_n)$ and an index set $\sigma \subseteq [n]$, we denote the product of elements indexed by $\sigma$ as $\bc^\sigma := \prod_{i\in\sigma} c_i$. For a set of variables in a polynomial ring we denote monomials the same way, $x^\sigma = \prod_{i\in\sigma} x_i$, and in addition we use the notation $(1 - x)^\sigma := \prod_{i\in\sigma} (1- x_i)$.

    In what follows, much of what we write is most succinctly presented by viewing $\cC \subseteq2^{[n]}$ as a collection of finite sets; however, when it is not too cumbersome, we will also state results and formulas in terms of binary vectors. The remainder of this section is devoted to review of the relevant definitions and theorems from \cites{neuralring13, morphisms, jeffs2019sunflowers}.

    \subsection{The coordinate ring and canonical form of a code}
    The set $\{0,1\}$ may be endowed with algebraic structure in which $0$ is the additive identity in two ways. We denote by $\B$ the semiring $\{0,1\}$ with the operations of Boolean logic (that is, $1 \vee 1 = 1$),
    and by $\F$ the field $\{0,1\}$ with arithmetic mod 2 (that is, $1 + 1 = 0$). Thus we may also view codes as subsets of $\B^n$ or $\F^n$ if we wish to endow them with additional structure.
        
        In particular, 
        it can be fruitful to think of $\cC \subseteq \F^n$ as an affine variety. Letting $R = \F[x_1,\dots,x_n]$ denote the polynomial ring in $n$ variables, the vanishing ideal of $\cC$ is denoted $I_\cC$,
            \[ I_\cC = \{ f \in R : f(\bc) = 0 \text{ for all } \bc \in \cC\}. \]
        The ideal $I_\cC$ is generated by \emph{pseudomonomials}, polynomials of the form $x^\sigma(1-x)^\tau$. We say a pseudomonomial is \emph{square-free} if $\sigma \cap \tau = \varnothing$. The \emph{canonical form} of $I_\cC$, denoted $CF(I_\cC)$, is the set of minimal (by divisibility) squarefree pseudomonomials in $I_\cC$.
        We sometimes abuse terminology and refer to $CF(I_\cC)$ as the \emph{canonical form of $\cC$}.

        We will call the quotient ring $R_\cC := R / I_\cC$ the \emph{neural ring of $\cC$} following \cite{neuralring13}.
        The neural ring is the ring of $\F$-valued functions on $\cC$.
        This ring comes equipped with a distinguished set of functions, the coordinate functions $x_1,\dots,x_n$, defined by
            \[ x_i(\bc) = c_i \quad \text{or, in terms of sets,}\quad x_i(\sigma) = \begin{cases}
                1 & i\in\sigma\\
                0 & i\notin\sigma.
            \end{cases} \]

        \begin{ex}\label{ex:standard1}
            Let $\cC = \{\varnothing, 12,23,34,123,234,1234\}$ (to avoid notational clutter, we write subsets of $[n]$ without set brackets or commas; that is, $12 = \{1,2\}$). In matrix form,
                \[ \bC = \begin{bmatrix}
                    0 & 0 & 0 & 0\\
                    1 & 1 & 0 & 0\\
                    0 & 1 & 1 & 0\\
                    0 & 0 & 1 & 1\\
                    1 & 1 & 1 & 0\\
                    0 & 1 & 1 & 1\\
                    1 & 1 & 1 & 1\\
                \end{bmatrix}
                .
                \]
            The vanishing ideal $I_\cC$ is generated by
            \begin{align*}
                I_\cC = \langle & x_1(1-x_1), x_2(1-x_2), x_3(1-x_3), x_4(1-x_4),\\
                    & x_4 (1 - x_3), x_1 (1 - x_2), \\
                    & x_2 (1 - x_1) (1 - x_3), x_3 (1 - x_2) (1 - x_4) \rangle
            \end{align*}
            The canonical form of $I_\cC$ is comprised of the last two rows of generators.
        \end{ex}
    
    \subsection{Morphisms of codes}
        For a code $\cC \subseteq 2^{[n]}$ and any subset $\tau \subseteq \N$, the \emph{trunk} of $\tau$ in $\cC$ is the subset of $\cC$ specified by
            \[ \tk_\cC(\tau) = \{\sigma \in \cC : \sigma \supseteq \tau\} \quad \text{or, in binary,} \quad \tk_\cC(\tau) = \{\bc \in \cC : \bc^\tau = 1\}. \]
        Note, we do not require $\tau \in \cC$, and allow, e.g.\ $\tk_\cC([n+1]) = \varnothing$.
        A trunk of a code is any subset of the above form. In particular, the empty set is a trunk of any code, and a code is a trunk of itself as $\cC = \tk_\cC(\varnothing)$.
        If $T = \tk_\cC(i)$ for some $i \in [n]$, we say $T$ is a \emph{simple trunk}.

        A \emph{morphism of codes} is a function $f: \cC \subseteq 2^{[n]} \to \cD \subseteq 2^{[r]}$ such that the preimage of a trunk (in $\cD$) is a trunk (in $\cC$).
        Any morphism of codes is completely specified by the preimages of simple trunks; in other words, a morphism can be specified by listing an ordered tuple $(T_1,\dots,T_r)$ of trunks of $\cC$, where $T_i = f^{-1}(\tk_\cD(i))$ for each $i \in [r]$. The morphism is then defined by 
        \begin{align}
            f(\sigma) = \{ j \in [r] : \sigma \in T_j\}\label{eq:morphismtrunks}
        \end{align}
        (see \cite{morphisms}*{Definition 2.10 and Propositions 2.11 and 2.12}).
        Two codes $\cC, \cD$ are isomorphic, written $\cC \cong \cD$, if there is a bijective morphism of codes $\cC \to \cD$ whose inverse is also a morphism.
        
        If $\cC \to \cD$ is a surjective morphism of codes, we say $\cD \leq \cC$.
        This defines a preorder on all combinatorial codes, which induces a partial order on isomorphism classes of codes.
        We denote this poset $\pcode$.
        
        \begin{rmk}\label{rmk:pcode}
            This definition of $\pcode$ is presented in \cite{jeffs2021phd}, and differs slightly from that defined in \cite{morphisms}.
            In the latter, we also have $T \leq \cC$ for any trunk $T$ of $\cC$.
            As noted in \cite{jeffs2025coveringrelationsposetcombinatorial}*{Proposition 12}, we always have $T \cup \{\varnothing\} \leq \cC$: the map that sends elements of $T$ to themselves and elements outside of $T$ to $\varnothing$ is a morphism of codes.
            Therefore, qualitatively at least, these differing notions of $\pcode$ capture the same relationships among codes.
        \end{rmk}
    
    \subsection{Triviality and redundancy}
        We will need the following technical notions.
                
        \begin{defn}
            Let $\cC \subseteq 2^{[n]}$ be a code, and $i \in [n]$ a neuron. We say $i$ is \emph{trivial} if $\tk_\cC(i) = \varnothing$. If there is a set $\sigma \subseteq[n]\setminus i$ such that $\tk_\cC(i) = \tk_\cC(\sigma)$, we say \emph{$i$ is redundant to $\sigma$}. We simply say $i$ is \emph{redundant} if there exists such a $\sigma$.
            If $\cC$ has no trivial or redundant neurons, we say $\cC$ is \emph{reduced}.
        \end{defn}
        The following two lemmas follow immediately from this definition.
        \begin{lem}
            Let $\cC$ be a code. The following are equivalent:
            \begin{enumerate}
                \item Neuron $i$ is trivial.
                \item In a matrix representing $\cC$, the $i$-th column is all 0s.
                \item $x_i \in I_\cC$, that is, $x_i = 0$ in $R_\cC$.
            \end{enumerate}
        \end{lem}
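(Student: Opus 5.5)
The plan is to prove the cycle (1) $\Rightarrow$ (2) $\Rightarrow$ (3) $\Rightarrow$ (1), unwinding the definitions of trunk, matrix representation, and vanishing ideal in turn.

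For (1) $\Rightarrow$ (2): suppose $\tk_\cC(i) = \varnothing$. Then no codeword $\sigma \in \cC$ contains $i$. Each row of a matrix representing $\cC$ is the indicator vector of some codeword $\sigma_k$, and its $i$-th entry is $c_{ki} = 1$ exactly when $i \in \sigma_k$. Hence every entry of the $i$-th column is $0$. This holds whichever ordering of the codewords is used and whether or not rows repeat, since the matrix has no other rows.

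For (2) $\Rightarrow$ (3): if the $i$-th column is all zeros, then every $\bc \in \cC$ (a row of the matrix) has $c_i = 0$. So the coordinate function $x_i$ evaluates to $0$ at every point of $\cC \subseteq \F^n$. By the definition of the vanishing ideal, $x_i \in I_\cC$, which is equivalent to $x_i = 0$ in $R_\cC = R/I_\cC$.

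For (3) $\Rightarrow$ (1): if $x_i \in I_\cC$, then $x_i(\bc) = c_i = 0$ for all $\bc \in \cC$. So no codeword contains $i$, and $\tk_\cC(i) = \{\bc \in \cC : \bc^{\{i\}} = 1\} = \varnothing$, i.e.\ $i$ is trivial.

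There is no real obstacle here; the argument is pure bookkeeping. The only point that needs care is the identification of $\cC$ with the set of rows of $\bC$: the rows are exactly the codewords, possibly with repetition, and repetition does not change which column entries can equal $1$.
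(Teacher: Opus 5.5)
Your proof is correct and is exactly the definitional unwinding the paper has in mind: the paper gives no proof, saying only that the lemma ``follows immediately from this definition.'' Your cycle (1)$\Rightarrow$(2)$\Rightarrow$(3)$\Rightarrow$(1) spells out that bookkeeping, including the harmless point about repeated rows.
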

        \begin{lem}\label{lem:characterizeredundant}
            Let $\cC$ be a code. The following are equivalent:
            \begin{enumerate}
                \item Neuron $i$ is redundant to $\sigma$.
                \item Column $i$ is the entry-wise product of columns indexed by $\sigma$.
                \item $x_i - x^\sigma \in I_\cC$, that is, $x_i = x^\sigma$ in $R_\cC$.
            \end{enumerate}
        \end{lem}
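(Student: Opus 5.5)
The plan is to prove the cycle of implications (1)$\Rightarrow$(2)$\Rightarrow$(3)$\Rightarrow$(1). Each step only unwinds definitions: the trunk of a set of neurons, the matrix $\bC$ whose rows are the codewords, and the fact that $R_\cC$ is the ring of $\F$-valued functions on $\cC$. I will use the dictionary between these three languages: for a codeword $\bc \in \cC$ (a row of $\bC$) and any $\tau \subseteq [n]$,
\[ \bc \in \tk_\cC(\tau) \iff \bc^\tau = 1 \iff x^\tau(\bc) = 1. \]
With $\tau = \{i\}$ this says $\bc \in \tk_\cC(i)$ if and only if $c_i = 1$.

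\emph{(1)$\Rightarrow$(2).} Assume $\tk_\cC(i) = \tk_\cC(\sigma)$ with $i \notin \sigma$. For each row $\bc$ of $\bC$, the dictionary gives $c_i = 1$ iff $\bc \in \tk_\cC(i)$ iff $\bc \in \tk_\cC(\sigma)$ iff $\bc^\sigma = 1$. Since all entries are $0/1$, this means $c_i = \bc^\sigma$ for every row. That is exactly the statement that column $i$ is the entry-wise product of the columns indexed by $\sigma$. When $\sigma = \varnothing$ we read the empty product as the all-ones column, consistent with $\tk_\cC(\varnothing) = \cC$.

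\emph{(2)$\Rightarrow$(3).} The identity $c_i = \bc^\sigma$ holds for every row, and so for every $\bc \in \cC$. Hence the polynomial $x_i - x^\sigma$ evaluates to $c_i - \bc^\sigma = 0$ at every point of $\cC$, so $x_i - x^\sigma \in I_\cC$. Equivalently, $x_i = x^\sigma$ in $R_\cC$.

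\emph{(3)$\Rightarrow$(1).} If $x_i - x^\sigma \in I_\cC$, evaluating at each $\bc \in \cC$ gives $c_i = \bc^\sigma$. By the dictionary, $\bc \in \tk_\cC(i)$ if and only if $\bc \in \tk_\cC(\sigma)$, so $\tk_\cC(i) = \tk_\cC(\sigma)$. Therefore $i$ is redundant to $\sigma$.

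I expect no step to be a real obstacle. The only points needing care are these:
\begin{enumerate}
\item The condition $i \notin \sigma$ is part of the hypothesis in all three items and is carried through unchanged.
\item A matrix representing $\cC$ may have repeated rows, but this does not matter because every condition is checked row by row.
\item The empty-product convention covers the case $\sigma = \varnothing$.
\end{enumerate}
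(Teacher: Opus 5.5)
Your proof is correct and takes the same approach as the paper, which simply notes that this lemma follows immediately from the definitions. Your cycle (1)$\Rightarrow$(2)$\Rightarrow$(3)$\Rightarrow$(1), using the dictionary $\bc \in \tk_\cC(\tau) \iff \bc^\tau = 1 \iff x^\tau(\bc) = 1$, is exactly that unwinding, and you were right to read $i \notin \sigma$ as a standing assumption, since without it (2) and (3) hold trivially for $\sigma = \{i\}$ while (1) fails.
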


        \begin{rmk}\label{rmk:reductionmorphism}
            If two codes are reduced and isomorphic, the isomorphism is a permutation of neurons \cite{morphisms}*{Corollary 3.8}. 
            If a code is not reduced, then up to permutation there is a unique reduced representative \cite{morphisms}*{Theorem 1.4}, and it can be found by first deleting all trivial neurons, then deleting redundant neurons one at a time until no redundant neurons are left.
            In light of this, we refer to the composition of this sequence of deletions as \emph{the} reduction morphism and denote it $\rho$.
        \end{rmk}
        From this remark, we get the following.
        \begin{lem}\label{lem:rhoisprojection}
            Let $\cC$ be a code and $\bC$ a matrix representing it. Then there is a projection matrix $\bP$ such that $\rho(\cC)$ is represented by $\bC\bP$.
        \end{lem}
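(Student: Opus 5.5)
The plan is to use the description of $\rho$ in \cref{rmk:reductionmorphism} as a finite composition of single-neuron deletions. I will show that each deletion acts on a representing matrix as right multiplication by a column-selection matrix, and then compose these.

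First, define the matrices. For $S = \{s_1 < \dots < s_k\} \subseteq [n]$, let $\bP_S$ be the $n \times k$ binary matrix with $(\bP_S)_{s_j, j} = 1$ and all other entries $0$. Each column of $\bP_S$ has exactly one $1$, and each row has at most one $1$. So for any $m \times n$ matrix $\bC$, the $j$-th column of the Boolean product $\bC\bP_S$ is
\[ \bigvee_{i} c_{\ell i}\,(\bP_S)_{i j} = c_{\ell s_j}, \]
which is exactly column $s_j$ of $\bC$. In Boolean arithmetic no sum has more than one nonzero term, so there is no difference from ordinary arithmetic. Thus $\bC\bP_S$ is the submatrix of $\bC$ on the columns indexed by $S$. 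This is what I mean by a projection matrix.

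Next, check that a single deletion is such a projection. Deleting a trivial neuron $i$, or a neuron $i$ redundant to some $\sigma \subseteq [n]\setminus i$, is the map $\cC \to 2^{[n]\setminus i}$ given by $\tau \mapsto \tau \setminus i$. On matrices, this removes column $i$ from each row, so the image code is represented by $\bC\bP_{[n]\setminus i}$. Two points need care here.
\begin{itemize}
\item Rows of $\bC$ may become equal after the column is deleted. This is harmless, because a matrix with repeated rows still represents its set of distinct rows.
\item Each step is indeed a morphism of codes. This is already part of \cref{rmk:reductionmorphism} and \cite{morphisms}, so I only need the matrix description.
\end{itemize}

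Finally, compose. By \cref{rmk:reductionmorphism}, $\rho$ is a sequence of deletions of neurons $i_1, \dots, i_t$, each applied to the current code. Every intermediate code is represented by the previous matrix times a column-selection matrix. By associativity of Boolean matrix multiplication,
\[ \bC\,\bP^{(1)}\bP^{(2)}\cdots\bP^{(t)} \]
represents $\rho(\cC)$. A product of column-selection matrices is again a column-selection matrix; explicitly it is $\bP_S$ with $S = [n]\setminus\{i_1,\dots,i_t\}$, up to the relabeling of indices at each stage. So $\bP = \bP_S$ works.

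The only real obstacle is bookkeeping. The neuron labels shift after each deletion, and the paper says $\rho$ is defined "up to permutation". To avoid shifting labels, I will work throughout with surviving index sets $S \subseteq [n]$ rather than relabeled neurons. If a permuted reduced representative is wanted, it is obtained by right multiplication by a permutation matrix, and $\bP_S$ times a permutation matrix is still a column-selection matrix.
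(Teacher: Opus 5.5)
Your proposal is correct and takes the paper's approach: the paper states this lemma as an immediate consequence of \cref{rmk:reductionmorphism}, reading $\rho$ as a sequence of deletions of trivial and redundant neurons, and each deletion is the removal of a column, i.e.\ right multiplication by a column-selection matrix. Your explicit computation of the Boolean product $\bC\bP_S$, the composition of selection matrices, and the permutation bookkeeping just supply the details the paper leaves implicit.
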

        \noindent The intended reading of this lemma is in terms of Boolean matrix operations, but we note that this statement holds over $\F$ as well.

\section{Canonical forms of neural ideals}\label{sec:canonicalforms}

    We now take a brief digression to state a pair of theorems that may be of independent interest to those studying the convex code problem \cites{curto2017whatconvex,curto2019algebraic}.
    Intersection-complete and union-complete codes will play an important role throughout our work.

\subsection{Intersection- and Union-complete codes}
    A code $\cC \subseteq 2^{[n]}$ is called \emph{intersection-complete} if for any pair of codewords $\sigma, \tau \in \cC$, their intersection $\sigma\cap\tau$ is a codeword, $\sigma\cap\tau \in \cC$.
Likewise, a code is called \emph{union-complete} if $\sigma\cup\tau \in\cC$ for every pair of codewords $\sigma,\tau \in \cC$.

    The authors of~\cite{curto2019algebraic} proved that the canonical form of the vanishing ideal of an intersection-complete code is at most linear in the $(1-x_i)$s. We state their theorem below together with a dual statement about union-complete codes, and provide a slightly different proof.

    \begin{prop}[\cite{curto2019algebraic}*{Proposition 3.7}]\label{lem:intgens}
        A code $\cC$ is intersection-closed if and only if the canonical form of $\cC$ is at most linear in the $(1-x_i)$ terms. That is, if $x^\sigma(1-x)^\tau \in CF(I_\cC)$, then $|\tau| \leq 1$.
    \end{prop}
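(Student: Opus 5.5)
The key tool is the standard interpretation of pseudomonomials in $I_\cC$ as forbidden patterns. A squarefree $x^\sigma(1-x)^\tau$ lies in $I_\cC$ exactly when no codeword $\rho\in\cC$ satisfies $\sigma\subseteq\rho$ and $\rho\cap\tau=\varnothing$. Equivalently, $\tk_\cC(\sigma)\subseteq\bigcup_{j\in\tau}\tk_\cC(j)$. Membership in $CF(I_\cC)$ is minimality under divisibility: no proper sub-pseudomonomial $x^{\sigma'}(1-x)^{\tau'}$ with $\sigma'\subseteq\sigma$, $\tau'\subseteq\tau$ lies in $I_\cC$. I would prove the two directions separately in this language.

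\emph{Intersection-closed implies $|\tau|\le 1$.} Suppose $x^\sigma(1-x)^\tau\in CF(I_\cC)$ with $|\tau|\ge 2$. Pick distinct $j,k\in\tau$ and write $\tau=\tau_0\sqcup\{j,k\}$.
\begin{enumerate}
\item By minimality, $x^\sigma(1-x)^{\tau\setminus j}\notin I_\cC$. Hence there is a codeword $\alpha$ with $\sigma\subseteq\alpha$ and $\alpha\cap(\tau\setminus j)=\varnothing$. Since the full pseudomonomial vanishes, $j\in\alpha$ is forced.
\item Symmetrically, there is a codeword $\beta\supseteq\sigma$ with $\beta\cap(\tau\setminus k)=\varnothing$ and $k\in\beta$.
\item By intersection-closure, $\gamma=\alpha\cap\beta\in\cC$. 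It contains $\sigma$, and it avoids $\tau_0$, $j$ (since $j\notin\beta$) and $k$ (since $k\notin\alpha$).
\item So $\gamma$ witnesses $x^\sigma(1-x)^\tau\notin I_\cC$, a contradiction.
\end{enumerate}
This direction is routine.

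\emph{$|\tau|\le 1$ implies intersection-closed.} Suppose $\alpha,\beta\in\cC$ but $\gamma=\alpha\cap\beta\notin\cC$. Then the indicator pseudomonomial $x^\gamma(1-x)^{[n]\setminus\gamma}$ vanishes on $\cC$. Every element of $I_\cC$ is a multiple of some canonical form element, since $CF$ generates $I_\cC$ and, more directly, every squarefree pseudomonomial in $I_\cC$ is divisible by a minimal one. So some $x^{\sigma}(1-x)^{\tau}\in CF(I_\cC)$ has $\sigma\subseteq\gamma$ and $\tau\subseteq[n]\setminus\gamma$.
\begin{enumerate}
\item By hypothesis, $|\tau|\le 1$.
\item If $\tau=\varnothing$, then $x^\sigma$ vanishes on $\alpha\supseteq\gamma\supseteq\sigma$, a contradiction.
\item If $\tau=\{j\}$ with $j\notin\gamma$, then $j\notin\alpha$ or $j\notin\beta$; say $j\notin\alpha$. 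Then $\alpha$ contains $\sigma$ and avoids $\tau$, so the pseudomonomial does not vanish at $\alpha$, again a contradiction.
\end{enumerate}
Hence $\gamma\in\cC$.

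The main obstacle is the divisibility fact used in the second direction: every squarefree pseudomonomial in $I_\cC$ is divisible by an element of $CF(I_\cC)$. This holds simply because divisibility among squarefree pseudomonomials is a finite partial order, so one can descend to a minimal element, which still lies in $I_\cC$. I would note this explicitly and otherwise keep the argument purely combinatorial.
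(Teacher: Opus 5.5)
Your proof is correct, but your forward direction takes a different route from the paper's.

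\textbf{The paper's route.} It takes $x^\sigma(1-x)^\tau\in I_\cC$ and sets $s=\sqrt[\cC]{\sigma}$, the intersection of all codewords containing $\sigma$. Intersection-closure gives $s\in\cC$. Vanishing at $s$ then forces some $j\in s\cap\tau$. This yields the explicit divisor $x^\sigma(1-x_j)\in I_\cC$, which is proper whenever $|\tau|>1$.

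\textbf{Your route.} You argue by contradiction from minimality. Removing $j$, resp.\ $k$, from $\tau$ gives codewords $\alpha,\beta\supseteq\sigma$ with $\alpha\cap\tau=\{j\}$ and $\beta\cap\tau=\{k\}$. The single pairwise intersection $\alpha\cap\beta$ then contains $\sigma$ and misses $\tau$.

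\textbf{What each buys.} Your argument uses the intersection-closure axiom exactly once and never needs to intersect an entire trunk. The paper's argument is constructive: it shows $x^\sigma(1-x_j)\in I_\cC$ for every $j\in\sqrt[\cC]{\sigma}\cap\tau$. That fact is reused in the proof of \cref{thm:intersectioncompletionviaCF}.

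\textbf{The converse.} Your argument via the indicator pseudomonomial $x^\gamma(1-x)^{[n]\setminus\gamma}$ is also correct. It supplies something the paper's proof does not. The paper only proves that intersection-closed codes have canonical forms at most linear in the $(1-x_i)$, and leaves the reverse implication to the citation.

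\textbf{One small correction.} The sentence ``every element of $I_\cC$ is a multiple of some canonical form element, since $CF$ generates $I_\cC$'' is false as written, for two reasons. First, $CF(I_\cC)$ excludes the Boolean relations $x_i(1-x_i)$ and need not generate $I_\cC$. For example, $\cC=\{\varnothing,1\}\subseteq 2^{[1]}$ has empty canonical form. Second, generating an ideal does not make every element a multiple of a single generator. What you actually use is that every squarefree pseudomonomial in $I_\cC$ is divisible by a minimal one, via descent in the finite divisibility order. You justify that correctly, so simply drop the first justification.
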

    \begin{prop}\label{lem:uniongens}
        A code $\cC$ is union-closed if and only if the canonical form of $\cC$ is at most linear in the $x_i$ terms. That is, if $x^\sigma(1-x)^\tau \in CF(I_\cC)$ then $|\sigma| \leq 1$.
    \end{prop}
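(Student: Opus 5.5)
The quickest route is to derive \cref{lem:uniongens} from \cref{lem:intgens} by complementing codewords. Let $\overline{\cC} = \{[n]\setminus \sigma : \sigma \in \cC\}$, or in binary, $\overline{\cC} = \{\mathbf{1} - \bc : \bc \in \cC\}$. By De Morgan's laws, $\cC$ is union-closed exactly when $\overline{\cC}$ is intersection-closed, since $([n]\setminus\sigma)\cap([n]\setminus\tau) = [n]\setminus(\sigma\cup\tau)$.

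Next, relate the two vanishing ideals. Let $\phi: R \to R$ be the $\F$-algebra automorphism sending $x_i \mapsto 1 - x_i$; it is an involution. For any $f$ and $\bc$ we have $(\phi f)(\bc) = f(\mathbf{1}-\bc)$, so $f \in I_{\cC}$ if and only if $\phi f \in I_{\overline{\cC}}$. Hence $\phi(I_\cC) = I_{\overline{\cC}}$. On pseudomonomials, $\phi$ acts by swapping roles:
\[ \phi\bigl(x^\sigma(1-x)^\tau\bigr) = (1-x)^\sigma x^\tau. \]
So $\phi$ is a bijection on squarefree pseudomonomials that preserves the condition $\sigma\cap\tau=\varnothing$. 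It also respects divisibility, because it is a ring automorphism and $p \mid q$ implies $\phi p \mid \phi q$. Therefore $\phi$ maps the set of minimal squarefree pseudomonomials in $I_\cC$ bijectively onto those in $I_{\overline{\cC}}$, which gives
\[ CF(I_{\overline{\cC}}) = \{ x^\tau(1-x)^\sigma : x^\sigma(1-x)^\tau \in CF(I_\cC)\}. \]

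Now apply \cref{lem:intgens} to $\overline{\cC}$. The code $\cC$ is union-closed iff $\overline{\cC}$ is intersection-closed. That holds iff every element $x^{\tau}(1-x)^{\sigma}$ of $CF(I_{\overline{\cC}})$ has $|\sigma|\le 1$. By the displayed bijection, this is the same as every $x^\sigma(1-x)^\tau \in CF(I_\cC)$ having $|\sigma|\le 1$, which is the claim.

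The main point needing care is that minimality by divisibility transfers under $\phi$. One should confirm that divisibility among squarefree pseudomonomials is the combinatorial condition $\sigma\subseteq\sigma'$, $\tau\subseteq\tau'$. This follows from the standard fact in \cite{neuralring13} that a pseudomonomial $x^\sigma(1-x)^\tau$ divides $x^{\sigma'}(1-x)^{\tau'}$ exactly under that inclusion, and the condition is visibly symmetric under swapping $\sigma$ and $\tau$. If one prefers to avoid the automorphism argument, that inclusion criterion alone suffices to finish the proof.
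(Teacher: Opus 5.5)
Your argument is correct, but it takes a different route from the paper's. The paper does not reduce to \cref{lem:intgens}; it reruns the argument directly. Assume $\cC$ is union-closed and $x^\sigma(1-x)^\tau\in I_\cC$. Either every codeword meets $\tau$, so $(1-x)^\tau\in I_\cC$ is a proper divisor unless $\sigma=\varnothing$. Or the union $t$ of all codewords disjoint from $\tau$ is itself a codeword, which forces some $j\in\sigma\setminus t$; then $x_j(1-x)^\tau\in I_\cC$ properly divides $x^\sigma(1-x)^\tau$ whenever $|\sigma|>1$.

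Your complementation automorphism $x_i\mapsto 1-x_i$, with $\phi(I_\cC)=I_{\overline{\cC}}$, turns the word ``dual'' into a formal derivation and is shorter. Because \cref{lem:intgens} is stated (with citation to \cite{curto2019algebraic}) as an equivalence, you also get both implications. The paper's direct proof only shows that union-closure forces $|\sigma|\le 1$ and never addresses the converse. The same $\phi$ would likewise give the union-completion theorem from \cref{thm:intersectioncompletionviaCF} without appealing to ``mutatis mutandis.'' The price is that your proof rests entirely on \cref{lem:intgens} as a black box, including a converse the paper does not prove itself. The paper's argument, by contrast, is self-contained and produces explicit witnesses such as $x_j(1-x)^\tau$, a technique it reuses in proving \cref{thm:intersectioncompletionviaCF}.

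One small wording issue: your last paragraph suggests the inclusion criterion for divisibility could ``alone'' replace the automorphism. You would still need $(\phi f)(\bc)=f(\mathbf{1}-\bc)$ to match membership in $I_\cC$ with membership in $I_{\overline{\cC}}$. Since ring automorphisms preserve divisibility anyway, that paragraph is superfluous rather than a gap.
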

    \begin{proof}[Proof of \cref{lem:intgens,lem:uniongens}]
		(\ref{lem:intgens}) We will show that any pseudomonomial $x^\sigma(1-x)^\tau \in I_\cC$ with $|\tau| > 1$ cannot be in the canonical form.
        
		Suppose $p(x) = x^\sigma (1-x)^\tau \in I_\cC$, so $p(c) = 0$ for all $c \in \cC$.
        If there are no codewords of $\cC$ that contain $\sigma$, then we would have $x^\sigma \in I_\cC$, and so $p(x)$ cannot be minimal unless $\tau = \varnothing$.
        So, suppose $\sigma$ is contained in some codeword of $\cC$, and let $s$ be the intersection of all codewords that contain $\sigma$.
        Since $\cC$ is intersection-closed, $s\in\cC$ and so $p(s) = 0$.
        Since $x_i = 1$ for $i \in s$, we must have $s \cap \tau \neq \varnothing$.
        In particular, this shows that any codeword that contains $\sigma$ must contain $\sigma \cup j$ for each $j \in s \cap \tau$, and therefore $x^\sigma(1-x_j) \in I_\cC$ for each $j \in s\cap \tau$.
        This shows that $x^\sigma(1-x)^\tau$ is not minimal and hence not in the canonical form if $|\tau| > 1$.
        
		(\ref{lem:uniongens}) We will show that any pseudomonomial $x^\sigma(1-x)^\tau \in I_\cC$ with $|\sigma| > 1$ cannot be in the canonical form.
		
		Suppose $q(x) = x^\sigma (1-x)^\tau \in I_\cC$, so $q(c) = 0$ for all $c \in \cC$.
        If $\tau$ is not disjoint from any codewords of $\sigma$, then $(1-x)^\tau \in I_\cC$ and so $q(x)$ cannot be minimal unless $\sigma = \varnothing$.
        So, suppose $\tau$ is disjoint from some codeword of $\cC$ and let $t$ be the union of all codewords disjoint from $\tau$.
        Since $\cC$ is union-closed, $t \in \cC$ and so $q(t) = 0$. But since $t$ is disjoint from $\tau$, we need one of the $x_i$ factors to evaluate to 0 at $t$; in other words, $\sigma \setminus \tau \neq \varnothing$.
        Therefore, any codeword which is disjoint from $\tau$ must not contain $j$ for each $j\in \sigma \setminus \tau$, and so $x_j(1-x)^\tau \in I_\cC$ for each $j \in \sigma\setminus \tau$.
        But in particular, this shows that $x^\sigma (1-x)^\tau$ is not minimal when $|\sigma| > 1$.
	\end{proof}

\subsection{Computing completions}
    \Cref{lem:intgens,lem:uniongens} can be extended: Given a code $\cC$, discarding the generators of $I_\cC$ which do not meet the conditions of \cref{lem:intgens} (respectively, \cref{lem:uniongens}) yields the canonical form of the intersection completion (resp., union completion) of $\cC$.

    To prove this, we will make use of Geller and R.~G.'s characterization of canonical forms of neural ideals\cite{Geller2024}.
    For $p,q$ two squarefree pseudomonomials in $\F[x_1,\dots,x_n]$, define the \emph{separator} of $p,q$ to be $\sep(p,q) = \{i : x_i(1-x_i) \mid pq\} \subseteq [n]$.

    \begin{thm}[Restatement of \cite{Geller2024}*{Theorem 1.1}]\label{thm:whencanonical}
		Let $\Gamma = \{p_1,\dots,p_m\}$ be a set of squarefree pseudomonomials such that $p_i \nmid p_j$ for all $i \neq j$. Then $\Gamma$ is in canonical form if and only if, for any pair $p \neq q \in \Gamma$, if $\sep(p,q) = \{i\}$, there is some $r \in \Gamma$ such that $r \mid\left( pq / x_i(1-x_i) \right)$.
	\end{thm}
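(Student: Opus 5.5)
The plan is to translate everything into the geometry of subcubes of $\{0,1\}^n$ and then run the classical consensus (Blake--Quine) argument for prime implicants. To a squarefree pseudomonomial $f = x^\sigma(1-x)^\tau$ I associate the subcube
\[ Q_f = \{\bc \in \{0,1\}^n : \bc \geq \sigma,\ \supp(\bc)\cap\tau = \varnothing\}, \]
which is exactly the set of points where $f$ evaluates to $1$. Two facts are immediate from this description: $p \mid f$ if and only if $Q_f \subseteq Q_p$, and for the code $\cC = V(\Gamma)\subseteq\F^n$ one has $f \in I_\cC$ if and only if $Q_f \subseteq U := \bigcup_{p\in\Gamma} Q_p$. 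I read ``$\Gamma$ is in canonical form'' as $\Gamma = CF(I_\cC)$ for this $\cC$. Under the translation, $CF(I_\cC)$ is the set of pseudomonomials whose cubes are the maximal subcubes contained in $U$. So the statement becomes: the cubes $Q_p$ are exactly the maximal subcubes of $U$ if and only if $\Gamma$ is closed under consensus up to absorption. For $p,q$ with $\sep(p,q)=\{i\}$, the cube of $pq/x_i(1-x_i)$ fixes every coordinate fixed by $p$ or $q$ except $i$, and it is contained in $Q_p\cup Q_q\subseteq U$.

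For the forward direction, I assume $\Gamma = CF(I_\cC)$ and take $p,q$ with $\sep(p,q)=\{i\}$. The consensus cube is a subcube of $U$, so it lies in some maximal subcube of $U$, which is $Q_r$ for some $r\in\Gamma$. The containment then gives $r \mid pq/x_i(1-x_i)$.

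For the converse, the key claim is that \emph{every} subcube $Q\subseteq U$ lies in some $Q_p$. I would prove this by induction on the number of free coordinates of $Q$. If $Q$ is a single point, it lies in $U$ and hence in some $Q_p$. Otherwise, I pick a free coordinate $i$ and split $Q = Q^0\sqcup Q^1$ according to $c_i$. By induction, $Q^0\subseteq Q_{p_0}$ and $Q^1\subseteq Q_{p_1}$. If $p_0$ or $p_1$ does not involve variable $i$, then its cube already contains all of $Q$ and we are done. Otherwise $p_0$ contains $(1-x_i)$ and $p_1$ contains $x_i$, so $p_0 \neq p_1$. Taking points $\mathbf a\in Q^0$ and $\mathbf b\in Q^1$ that differ only in coordinate $i$, with $p_0(\mathbf a)=p_1(\mathbf b)=1$, shows that $p_0,p_1$ cannot conflict in any coordinate $j\neq i$, so $\sep(p_0,p_1)=\{i\}$. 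The hypothesis then yields $r\in\Gamma$ dividing $p_0p_1/x_i(1-x_i)$. Every coordinate fixed by that consensus term is fixed the same way on $Q$, since it is fixed by $p_0$ or $p_1$ and $Q^0$, $Q^1$ share all of $Q$'s fixed values. Hence $Q\subseteq Q_r$.

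Given the claim, I finish as follows. Each maximal subcube of $U$ is contained in some $Q_p\subseteq U$, so by maximality it equals $Q_p$. Conversely, each $Q_p$ is maximal: if $Q_p\subsetneq Q$ with $Q$ a maximal subcube of $U$, then $Q = Q_r$ for some $r\in\Gamma$, so $r\mid p$ with $r\neq p$, contradicting the assumption that no element of $\Gamma$ divides another. Thus $\Gamma = CF(I_\cC)$. I expect the main obstacle to be the inductive step, specifically verifying that $\sep(p_0,p_1)$ is exactly $\{i\}$ and that the consensus cube really contains $Q$. This bookkeeping is where the choice of adjacent witness points $\mathbf a,\mathbf b$ does the work. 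A secondary point to handle carefully is the convention that $\Gamma$ generates $I_\cC$ only together with the Boolean relations $x_j(1-x_j)$, which the geometric translation absorbs automatically.
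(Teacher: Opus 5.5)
The paper gives no proof of this statement: it is quoted from Geller and R.G.\ and used as a black box in the proof of Theorem~\ref{thm:intersectioncompletionviaCF}. So your argument is a genuinely different route, replacing a citation with a self-contained proof, and it is correct.

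Under your dictionary $p \mapsto Q_p$:
\begin{itemize}
\item squarefree pseudomonomials in $I_\cC$ are the nonempty subcubes of $U = \{0,1\}^n \setminus V(\Gamma)$;
\item divisibility corresponds to reverse inclusion;
\item $CF(I_\cC)$ is the set of maximal subcubes of $U$, i.e.\ the prime implicants of the indicator function of the non-codewords;
\item the separator condition is exactly the consensus rule with absorption.
\end{itemize}
Your inductive lemma, that every subcube of $U$ lies in some $Q_p$, is the classical Blake--Quine argument: an absorption-free, consensus-closed family of implicants is the complete set of prime implicants.

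Both directions and the final maximality argument check out. This includes the use of the adjacent points $\mathbf a,\mathbf b$ to force $\sep(p_0,p_1)=\{i\}$. Your reading of ``in canonical form'' as $\Gamma = CF(I_{V(\Gamma)})$ also matches how the paper uses the result when it passes from $\hat\Gamma$ to the zero set $\hat\cC$.

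Two spots are slightly compressed and deserve a sentence each in a write-up:
\begin{itemize}
\item The consensus term $pq/x_i(1-x_i)$ can contain squared factors when $p$ and $q$ share a literal. Divisibility by the squarefree $r$ should therefore be read through its squarefree part, which is a genuine squarefree pseudomonomial precisely because $\sep(p,q)=\{i\}$.
\item For $Q \subseteq Q_r$, the clean reason is that $Q^1$ is the reflection of $Q^0$ in coordinate $i$. Hence any coordinate $j \neq i$ that is constant on $Q^0$ (or on $Q^1$) takes that same constant value on all of $Q$. The phrase that $Q^0,Q^1$ ``share $Q$'s fixed values'' does not by itself rule out $j$ being free in $Q$.
\end{itemize}

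The citation buys the paper brevity. Your route buys independence from the external source, and it explains the criterion conceptually as the classical consensus theorem written in the language of neural ideals.
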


   This leads to our first main results determining the canonical forms of intersection and union completions of codes.

    \begin{thm}\label{thm:intersectioncompletionviaCF}
        Let $\cC \subseteq 2^{[n]}$ be a code and $\Gamma = \{p_1,\dots,p_t\}$ the canonical form of $I_\cC$. Set $\hat\Gamma = \{x^\sigma (1-x)^\tau \in \Gamma : |\tau| \leq 1\}$. Then $\hat\Gamma$ is the canonical form of the vanishing ideal of the intersection completion of $\cC$.
    \end{thm}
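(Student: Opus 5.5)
The plan is to write $\cC'$ for the intersection completion of $\cC$ and $J$ for the ideal generated by $\hat\Gamma$ together with the Boolean relations $x_i(1-x_i)$. There are two things to prove: that $\hat\Gamma$ is a valid canonical form, i.e.\ $\hat\Gamma = CF(J)$, and that $J = I_{\cC'}$.

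\emph{Step 1: $\hat\Gamma$ is in canonical form.} I will use \cref{thm:whencanonical}.
\begin{enumerate}
\item Pairwise non-divisibility of the elements of $\hat\Gamma$ is inherited from $\Gamma$.
\item Take $p = x^{\sigma}(1-x)^{\tau}$ and $q = x^{\sigma'}(1-x)^{\tau'}$ in $\hat\Gamma$ with $\sep(p,q) = \{i\}$.
\item Since $\Gamma$ is in canonical form, \cref{thm:whencanonical} gives some $r \in \Gamma$ with $r \mid pq/x_i(1-x_i)$.
\item The factor $(1-x_i)$ appears in exactly one of $\tau, \tau'$, and each of these has size at most $1$. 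So after removing $x_i(1-x_i)$, the $(1-x)$-part of $pq/x_i(1-x_i)$ has at most one factor.
\item Any divisor $r$ then also has at most one $(1-x_j)$ factor, so $r \in \hat\Gamma$.
\end{enumerate}
By \cref{thm:whencanonical} again, $\hat\Gamma$ is in canonical form, hence $\hat\Gamma = CF(J)$.

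\emph{Step 2: $J \subseteq I_{\cC'}$.} Each $p \in \hat\Gamma$ has the form $x^\sigma$ or $x^\sigma(1-x_j)$ and vanishes on $\cC$. I will show that the zero set of such a $p$ is closed under intersection.
\begin{itemize}
\item If $p = x^\sigma(1-x_j)$ and $p(c\cap d) = 1$, then $\sigma \subseteq c\cap d$ and $j \notin c\cap d$. So $j$ is missing from $c$ or from $d$, and $p$ equals $1$ at that codeword.
\item The case $p = x^\sigma$ is the same argument without the $j$.
\end{itemize}
Iterating pairwise intersections, every $p\in\hat\Gamma$ vanishes on $\cC'$, so $J \subseteq I_{\cC'}$.

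\emph{Step 3: $I_{\cC'} \subseteq J$.} This is where \cref{lem:intgens} enters.
\begin{enumerate}
\item Since $\cC'$ is intersection-closed, every $q \in CF(I_{\cC'})$ has at most one $(1-x_j)$ factor.
\item Since $\cC \subseteq \cC'$, we have $q \in I_{\cC'} \subseteq I_\cC$.
\item Every squarefree pseudomonomial in $I_\cC$ is divisible by an element of $CF(I_\cC) = \Gamma$. Any such divisor of $q$ has at most one $(1-x_j)$ factor, so it lies in $\hat\Gamma$.
\item Hence $q \in J$. Since $CF(I_{\cC'})$ together with the Boolean relations generates $I_{\cC'}$, we get $I_{\cC'} \subseteq J$.
\end{enumerate}
Combining Steps 2 and 3 gives $J = I_{\cC'}$, so $\hat\Gamma = CF(I_{\cC'})$ by Step 1.

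The main obstacle is bookkeeping rather than ideas, in two places. First, the canonical form generates the neural ideal only together with the Boolean relations, so $J$ must be defined to include them and equality is taken as ideals containing them. Second, the fact that every squarefree pseudomonomial of $I_\cC$ is a multiple of an element of $CF(I_\cC)$ is the standard property of canonical forms from \cite{neuralring13}, which I would cite rather than reprove.
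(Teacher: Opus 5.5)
Your proof is correct. Step~1 is the paper's own use of \cref{thm:whencanonical}, but the rest of your argument takes a different route.

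The paper works with points. It lets $\hat\cC$ be the zero set of $\hat\Gamma$. It gets $\hat\cC \supseteq \cC'$ from the converse direction of \cref{lem:intgens}; the paper's proof of that proposition only covers the other direction. It then shows $\hat\cC \subseteq \cC'$ point by point, rerunning the root argument. A point $\sigma \in \hat\cC\setminus\cC$ must lie in some codeword and must equal $\sqrt[\cC]{\sigma}$. Otherwise some $x^\sigma$ or $x^\sigma(1-x_j)$ in $I_\cC$, and hence a divisor of it in $\hat\Gamma$, would exclude $\sigma$.

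You work with ideals instead. Your Step~2 proves the closure property directly. Your Step~3 applies the forward direction of \cref{lem:intgens}, which the paper does prove, to $\cC'$ itself, and transfers generators through $I_{\cC'} \subseteq I_\cC$ by divisibility. This is more self-contained.

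It also makes Step~1 unnecessary:
\begin{enumerate}
\item If $q \in CF(I_{\cC'})$, the divisor $r \in \hat\Gamma$ from Step~3 lies in $I_{\cC'}$ by Step~2, so $r = q$ by minimality of $q$.
\item If $p \in \hat\Gamma$, then $p \in I_{\cC'}$ by Step~2, so some $q \in CF(I_{\cC'})$ divides $p$. Since $q \in I_\cC$, minimality of $p$ in $\Gamma$ forces $q = p$.
\end{enumerate}
So your route needs neither \cite{Geller2024} nor the fact that the canonical form plus the Boolean relations generates the ideal.

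The divisibility fact you planned to cite needs no citation. It follows from the definition of $CF$ as the divisibility-minimal squarefree pseudomonomials, of which there are finitely many. In exchange, the paper's version identifies the zero set of $\hat\Gamma$ explicitly via roots, in the language used throughout the rest of the paper.
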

    \begin{proof}
        First, we will verify the two conditions of \cref{thm:whencanonical}.
		Since we started with $\Gamma$ in canonical form and removed some elements, none of the $p_i$ divide each other, so it only remains to check the condition on separators.
		
		Suppose $p, q \in \hat{\Gamma}$ with $\sep(p,q) = \{i\}$.
		Without loss of generality, $p = x^\alpha (1-x_i)$ for some $\alpha \subseteq [n] \setminus \{i\}$ and $q = x^{\sigma \cup i}(1-x)^\tau$ with $\sigma, \tau \subseteq [n] \setminus i$ and $|\tau| \leq 1$.
		By \cref{thm:whencanonical}, there must be some $r \in \Gamma$ such that 
        \[
        r \mid \frac{pq}{x_i(1-x_i)} = x^{\alpha \cup \sigma}(1-x)^\tau.
        \]
        Such an $r$ would also be in $\hat{\Gamma}$. Therefore, by \cref{thm:whencanonical}, $\hat{\Gamma}$ is in canonical form.

        Now, let $\hat\cC \subseteq \F^n$ denote the zero set of the ideal generated by $\hat\Gamma$, and $\hat{I}$ the vanishing ideal of $\hat\cC$. By the ideal-variety correspondence, since $\hat{I} \subseteq I_\cC$ we have $\hat\cC \supseteq \cC$, and by  \cref{lem:intgens}, $\hat\cC$ is intersection-complete and thus contains the intersection-completion of $\cC$.

        Now consider $\sigma \in \hat\cC \setminus \cC$. First, we claim that $\sigma$ must be a contained in at least one codeword of $\cC$: if not, then $x^\sigma \in I_\cC$, so some monomial in the canonical form of $I_\cC$ divides $x^\sigma$. This monomial would be in $\hat\Gamma$ and therefore in $\hat I$, so $\sigma \notin \hat\cC$. Thus $\sigma$ is contained in at least one codeword of $\cC$.
        
        Let $\Sigma$ be the intersection of all codewords in $\cC$ that contain $\sigma$. If $\sigma \subsetneq \Sigma$, then the argument used to prove  \cref{lem:intgens} shows that for each $i \in \Sigma \setminus \sigma$, we must have $x^\sigma(1-x_i) \in I_\cC$. However, each of these terms is also in $\hat{I}$ and therefore $\sigma \notin \hat\cC$. Therefore, we must have $\sigma = \Sigma$ and so every word in $\hat\cC$ is the intersection of words in $\cC$; that is, $\hat\cC$ is the intersection completion of $\cC$.
    \end{proof}

    This argument can be applied, \emph{mutatis mutandis}, to prove the same result applies to the union-completion:
    \begin{thm}
        Let $\cC \subseteq 2^{[n]}$ be a code, and $\Gamma = \{p_1,\dots,p_t\}$ the canonical form of $I_\cC$. Set $\Gamma' = \{x^\sigma (1-x)^\tau \in \Gamma : |\sigma| \leq 1\}$. Then $\Gamma'$ is the canonical form of the vanishing ideal of the union completion of $\cC$.
    \end{thm}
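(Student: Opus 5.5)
I will mirror the proof of \cref{thm:intersectioncompletionviaCF}, interchanging the roles of $x_i$ and $(1-x_i)$ and of intersections and unions. The argument has three steps: show $\Gamma'$ is in canonical form, show its zero set contains the union completion of $\cC$, and show it contains nothing more. A slicker alternative is to apply \cref{thm:intersectioncompletionviaCF} to the complement code $\bar\cC = \{[n]\setminus\sigma : \sigma\in\cC\}$. The substitution $x_i \mapsto 1-x_i$ is a ring automorphism of $R$. It sends $I_\cC$ to $I_{\bar\cC}$, preserves divisibility and squarefreeness, and hence sends $CF(I_\cC)$ to $CF(I_{\bar\cC})$, exchanging the exponent sets $\sigma$ and $\tau$. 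Complementation also sends unions to intersections, so the union completion of $\cC$ corresponds to the intersection completion of $\bar\cC$. Under these identifications $\Gamma'$ maps exactly to $\widehat{CF(I_{\bar\cC})}$, and the result follows. I would present this duality argument as the main proof and keep the direct argument as a check.

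For the direct route, first verify the conditions of \cref{thm:whencanonical}. Non-divisibility is inherited from $\Gamma$. Now suppose $p,q\in\Gamma'$ have $\sep(p,q)=\{i\}$. Since each has at most one positive variable, we may take $p = x_i(1-x)^\alpha$ and $q = x^\sigma(1-x)^{\tau\cup i}$ with $|\sigma|\le 1$. \Cref{thm:whencanonical} applied to $\Gamma$ gives $r\in\Gamma$ dividing $x^\sigma(1-x)^{\alpha\cup\tau}$. This $r$ has at most one positive variable, so $r\in\Gamma'$, and hence $\Gamma'$ is in canonical form.

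Next, let $\cC'$ be the zero set of $\langle\Gamma'\rangle$. Since $\Gamma'\subseteq I_\cC$, we have $\cC'\supseteq\cC$. Because $\Gamma'$ is the canonical form of the vanishing ideal of $\cC'$, \cref{lem:uniongens} makes $\cC'$ union-closed, so $\cC'$ contains the union completion.

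For the reverse inclusion, take $\sigma\in\cC'$ and set $\tau=[n]\setminus\sigma$. If no codeword of $\cC$ is disjoint from $\tau$, then $(1-x)^\tau\in I_\cC$. Some element of $\Gamma$ dividing it is a pure $(1-x)$-product, hence lies in $\Gamma'$ and vanishes at $\sigma$, which is a contradiction. Otherwise, let $t$ be the union of the codewords of $\cC$ contained in $\sigma$. If $t\subsetneq\sigma$, then for $j\in\sigma\setminus t$ the argument of \cref{lem:uniongens} gives $x_j(1-x)^\tau\in I_\cC$. A divisor of this in $\Gamma$ lies in $\Gamma'$ and vanishes at $\sigma$, again a contradiction. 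Hence $\sigma=t$ is a union of codewords.

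The main obstacle is bookkeeping. In the direct route, one must check that every divisor produced in $\Gamma$ has at most one $x$-factor. In the duality route, one must confirm that the automorphism carries minimal squarefree pseudomonomials to minimal ones. Both checks are straightforward.
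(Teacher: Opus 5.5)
Your proposal is correct, and your direct route is the paper's approach: its proof of this theorem is just the remark that the argument for \cref{thm:intersectioncompletionviaCF} applies \emph{mutatis mutandis}. Your three steps write that adaptation out. They are the separator check via \cref{thm:whencanonical}, union-closedness of the zero set via \cref{lem:uniongens}, and the maximal-union argument for the reverse inclusion.

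Your main route is genuinely different. Instead of redoing the argument, you transport \cref{thm:intersectioncompletionviaCF} along the involution $x_i\mapsto 1-x_i$ of $R$. This map sends $I_\cC$ to $I_{\bar\cC}$ and sends $x^\sigma(1-x)^\tau$ to $x^\tau(1-x)^\sigma$. Being a ring automorphism, it preserves divisibility and hence minimality, so it carries $CF(I_\cC)$ onto $CF(I_{\bar\cC})$. Complementation also turns the union completion of $\cC$ into the intersection completion of $\bar\cC$. Pulling back $\{x^{a}(1-x)^{b}\in CF(I_{\bar\cC}) : |b|\le 1\}$ then gives exactly $\Gamma'$.

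What the duality route buys:
\begin{itemize}
\item It needs no new verification.
\item It explains why the \emph{mutatis mutandis} works: the union statement is literally the image of the intersection statement under a symmetry.
\item The same trick derives \cref{lem:uniongens} from \cref{lem:intgens}.
\end{itemize}
What the direct route buys is independence from \cref{thm:intersectioncompletionviaCF}. Both routes rest on the same implicit step, which the paper also glosses over: a set in canonical form, together with the Boolean relations, generates the vanishing ideal of its own zero set, so it is that ideal's canonical form.

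There is a wording slip in both contradiction steps of your direct route. The divisor $r\in\Gamma'$ you produce does \emph{not} vanish at $\sigma$. Its $x$-part is supported in $\sigma$ and its $(1-x)$-part in $\tau=[n]\setminus\sigma$, so $r(\sigma)=1$. That nonvanishing is what contradicts $\sigma\in\cC'$.
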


\section{Morphisms as matrices}\label{sec:morphismsasmatrices}
    The observation at the heart of our work is that morphisms of codes and monomial maps can be specified by binary matrices, and composition of monomial maps is given by the \emph{Boolean} product of those matrices.
    
\subsection{Representatives of morphisms}
    If $T \subseteq \cC$ is a trunk, then there may be more than one $\tau \subseteq [n]$ such that $T = \tk_\cC(\tau)$.
    If $T = \tk_\cC(\tau)$, we say $\tau$ \emph{generates} the trunk $T$.
    The \emph{generators} of a trunk is the set
        \[ \gens(T) = \{\tau \subseteq [n] : T = \tk_\cC(\tau)\}. \]
    We adopt the convention that if $T$ is empty then $\gens(T) = \{\}$ is the empty set.
    
    \begin{lem}\label{lem:root}
        For any nonempty trunk $T \subseteq \cC$, $\gens(T)$ has a unique maximal element given by $\tau = \bigcap_{\alpha \in T} \alpha$.
    \end{lem}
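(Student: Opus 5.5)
We prove the statement by showing directly that $\tau := \bigcap_{\alpha \in T}\alpha$ lies in $\gens(T)$ and contains every other generator. Since $\gens(T)$ is ordered by inclusion, this makes $\tau$ the maximum of $\gens(T)$, and in particular its unique maximal element. We fix a nonempty trunk $T$ and some $\tau_0 \in \gens(T)$, so that $T = \tk_\cC(\tau_0)$.

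\emph{First step: every generator is contained in $\tau$.} Let $\tau' \in \gens(T)$ be arbitrary. Every $\alpha \in T = \tk_\cC(\tau')$ satisfies $\alpha \supseteq \tau'$. Because $T \neq \varnothing$, the intersection over $T$ is taken over a nonempty family, so $\tau' \subseteq \bigcap_{\alpha\in T}\alpha = \tau$. Nonemptiness is exactly what is needed here: over an empty family the intersection would not be defined, or would have to be taken as $[n]$. This is also why the paper adopts the convention $\gens(\varnothing)=\{\}$.

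\emph{Second step: $\tau$ itself generates $T$.} We show $\tk_\cC(\tau) = T$ by proving both inclusions.
\begin{itemize}
\item[($\supseteq$)] Every $\alpha\in T$ contains the intersection $\tau$, so $T \subseteq \tk_\cC(\tau)$.
\item[($\subseteq$)] By the first step applied to $\tau_0$, we have $\tau_0 \subseteq \tau$. Hence any $\beta \in \cC$ with $\beta \supseteq \tau$ also satisfies $\beta \supseteq \tau_0$, so $\beta \in \tk_\cC(\tau_0) = T$. Thus $\tk_\cC(\tau) \subseteq T$.
\end{itemize}
Therefore $\tau \in \gens(T)$.

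Combining the two steps, $\tau$ is an element of $\gens(T)$ that contains every element of $\gens(T)$, so it is the greatest element. Any maximal element $\tau'$ satisfies $\tau' \subseteq \tau$ by the first step, and maximality then forces $\tau' = \tau$, which gives uniqueness.

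The argument is elementary, and the only point requiring care is the reverse inclusion $\tk_\cC(\tau)\subseteq T$. It relies on the antitonicity of $\tk_\cC(\cdot)$ (larger sets give smaller trunks) together with the existence of at least one generator $\tau_0\subseteq\tau$. Both hold because $T$ is assumed to be a nonempty trunk.
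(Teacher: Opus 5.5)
Your proposal is correct and follows essentially the same route as the paper's proof: first show $T \subseteq \tk_\cC(\tau)$, then show every generator lies inside $\tau$. You also write out the reverse inclusion $\tk_\cC(\tau) \subseteq T$, via $\tau_0 \subseteq \tau$ and antitonicity of $\tk_\cC$, which the paper leaves implicit when it says ``thus $\tau \in \gens(T)$.''
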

    
    \begin{proof}
        It is clear that $T \subseteq \tk_\cC(\tau)$ as every word in $T$ contains $\tau$.
        If $\tau' \in \gens(T)$ then it is a subset of every codeword in $T$, and therefore it is a subset of their intersection, i.e.\ $\tau' \subseteq \tau$.
        Thus, we must have $\tau \in \gens(T)$ and $\tau$ is maximal in $\gens(T)$.
    \end{proof}
    
    In light of the above, we introduce the following terminology and notation:
    
    \begin{defn}\label{def:root}
        Let $S \subseteq 2^{[n]}$ be nonempty. 
        Then the \emph{root of $S$} is the intersection of all elements of $S$, that is $\sqrt{S} = \bigcap_{\sigma \in S} \sigma \subseteq [n]$.
        Given a code $\cC\subseteq 2^{[n]}$ and $\sigma \subseteq [n]$, we define the \emph{root of $\sigma$ relative to $\cC$} as $\sqrt[\cC]{\sigma} = \sqrt{\tk_\cC(\sigma)}$.
        
        For a trunk $T \subseteq \cC$, we say $T$ is \emph{rooted} if $\sqrt{T} \in \cC$ and \emph{free} if $\sqrt{T} \notin \cC$. For $i \in [n]$, we say $i$ is rooted if $\sqrt[\cC]{i} \in \cC$ and free if $\sqrt[\cC]{i} \notin \cC$.
    \end{defn}
        
    \begin{defn}
    Let $f: \cC \subseteq 2^{[n]} \to \cD \subseteq 2^{[r]}$ be a morphism of codes such that for each $i \in [r]$, $f^{-1}(\tk_\cD(i))$ is nonempty.
    A \emph{representative of $f$} is an ordered tuple $(\tau_1,\dots,\tau_r)$ of subsets of $[n]$ such that, for each $i \in [r]$, $\tau_i \in \gens(f^{-1}(\tk_\cD(i)))$.
    Alternatively, the $r\times n$ matrix where the support of the $i$th row is $\tau_i$ represents $f$:
        \begin{equation}\label{eq: T}
             \bT = [t_{ij}], \quad t_{ij} = \begin{cases}
            1 & j \in \tau_i\\
            0 & j \notin \tau_i
        \end{cases} 
        \end{equation}
    \end{defn}
    The condition that $f^{-1}(\tk_\cD(i))$ be nonempty is necessary due to our convention that $\gens(T) = \varnothing$ when $T$ is empty.
    
    If $(\tau_1,\dots,\tau_r)$ is a representative of a morphism $f$, formula \eqref{eq:morphismtrunks} can be written
        \begin{align}
            f(\sigma) = \{j \in [r] : \tau_j \subseteq \sigma\}. \label{eq:morphismsets} 
        \end{align}
    For binary vectors $\bc = (c_1,\dots,c_n) \in \{0,1\}^n$, we can rewrite \eqref{eq:morphismsets} as
        \begin{align}
            f(\bc) = \Bigl(\prod_{i \in \tau_1} c_i, \dots, \prod_{i \in \tau_r} c_i \Bigr)=\Bigl( \bc^{\tau_1}, \dots, \bc^{\tau_r} \Bigr). \label{eq:morphismvecs}
        \end{align}
    Given a tuple $t = (\tau_1,\dots,\tau_r)$ of subsets of $[n]$ we denote by $f_t$ the map specified by \eqref{eq:morphismsets} or \eqref{eq:morphismvecs}, and call it the \emph{morphism represented by $t$.}

\subsection{The pullback homomorphism}\label{sec:pullback}
    Writing a morphism of codes $f$ in the form \eqref{eq:morphismvecs} makes it clear that the pullback homomorphism $f^*: R_\cD \to R_\cC$ acts on coordinate functions by $x_i \mapsto x^{\tau_i}$.
    Such a ring homomorphism, in which the image of each variable is a monomial, is called a \emph{monomial map}.
    Jeffs showed that the category whose objects are combinatorial codes and morphisms are morphisms of codes is equivalent to the category of neural rings with monomial maps~\cite{morphisms}.

    Thus, a matrix $\bT$ as in \eqref{eq: T} specifies a morphism of codes which is equivalent to a monomial map. Conversely, given a monomial map $\phi$, reading the exponent vector of $\phi(x_i)$ as the  $i$th row of a matrix gives a representative of $\phi$.
    We will denote this matrix $[\phi]$; that is, $[\phi]_{ij} = 1$ if $x_j \mid \phi(x_i)$ and $0$ otherwise.
    If $\phi: R_\cC \to R_\cD$ and $\psi: R_\cD \to R_\cE$ are monomial maps, a straightforward calculation shows that $[\phi \circ \psi] = [\psi][\phi]$. Because $x_i^2 = x_i$ for all $i$ in their respective neural rings, the matrix product is actually carried out over $\B$.

    In general, the matrix representing a monomial map need not be unique, as the following example shows.
    \begin{ex}\label{ex:nonuniquemonomialmap}
        Consider $\cC$ from \cref{ex:standard1}, and let $\cV$ be the code
        \begin{align*}
        \cV & = \{\varnothing, 1,2,3,12,23,123\}.
        \end{align*}
        The following matrices represent the same ring homomorphism $R_\cV \to R_\cC$:
        \[ \bH = \begin{bmatrix}
            1 & \mathbf{1} & 0 & 0\\
            0 & 1 & 1 & 0\\
            0 & 0 & 1 & 1
        \end{bmatrix}
        \qquad
        \bH' = \begin{bmatrix}
            1 & \mathbf{0} & 0 & 0\\
            0 & 1 & 1 & 0\\
            0 & 0 & 1 & 1
        \end{bmatrix}\]
        The difference is highlighted in bold in the first row. To see these specify the same map, it suffices to note that $x_1 = x_1x_2$ in $R_\cC$, as $x_1(1-x_2)$ is a generator of $I_\cC$.
    \end{ex}

    This example is simply a manifestation of the fact that both
    \[ (12,~23,~34)\quad \text{and} \quad(1,~23,~34)\] 
    are representative of the same morphism of codes $\cC \to \cV$.

\subsection{Galois connection}
    
    A Galois connection between two posets $A,B$ is a pair of functions $F: A \to B$, $G: B\to A$, such that
            \[ F(a) \leq b \quad \iff \quad a \leq G(b). \] 
        In this setting, $F$ is called the \emph{lower adjoint} and $G$ the \emph{upper adjoint}.
    
        The two maps defined by a matrix $\bH$ form a Galois connection (proven below in \cref{lem:galoisconnection}).
        In what follows, $H_a$ denotes the $a$th row of $\bH$ and $\eta_a = \supp(H_a)$. 
        \begin{defn}\label{def:galoisconnectionofH}
            Let $\bH$ be an $r \times n$ binary matrix. Then we define the following pair of maps between $2^{[r]}$ and $2^{[n]}$:
            \[ F_{\bH}: 2^{[r]} \to 2^{[n]} \quad \text{and} \quad G_{\bH}: 2^{[n]} \to 2^{[r]} \]
            \begin{itemize}
                \item $F_{\bH}$ is Boolean matrix multiplication:
                    \begin{align}
                        2^{[r]} & \to 2^{[n]} & \{0,1\}^r & \mapsto \{0,1\}^n\nonumber\\
                        \xi &\mapsto \bigcup_{j\in\xi} \eta_j & \bx &\mapsto \bx\bH\label{eq:galoisF}\\
                        & & (F_\bH(x))_i & = \bigvee_{j=1}^n x_j \wedge h_{ji}
                    \end{align}
                \item $G_{\bH}$ is the morphism of codes represented by $(\eta_1,\dots,\eta_r)$
                    \begin{align}
                        2^{[n]} & \to 2^{[r]} & \{0,1\}^n & \to \{0,1\}^r\nonumber\\
                        \xi & \mapsto \{j : \eta_j \subseteq \xi\}
                         & (x_1,\dots,x_n) & \to (x^{H_1},\dots, x^{H_r})\label{eq:galoisG}.
                    \end{align}
            \end{itemize}
        \end{defn}
    \noindent Note that $F_\bH$ is not, in general, a morphism of codes.
    \begin{lem}\label{lem:galoisconnection}
            The maps $F_{\bH},G_{\bH}$ defined above are the lower and upper adjoints, respectively, in a Galois connection between $2^{[n]}$ and $2^{[r]}$.
        \end{lem}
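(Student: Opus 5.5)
The plan is to verify the defining equivalence directly: for $\xi \subseteq [r]$ and $\zeta \subseteq [n]$,
\[ F_\bH(\xi) \subseteq \zeta \quad \iff \quad \xi \subseteq G_\bH(\zeta), \]
with both posets ordered by containment. Both sides will be unwound into the same elementwise condition on the rows $\eta_j = \supp(H_j)$.

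First, by \eqref{eq:galoisF}, $F_\bH(\xi) = \bigcup_{j \in \xi} \eta_j$. A union is contained in $\zeta$ exactly when each member is, so the left-hand side is equivalent to $\eta_j \subseteq \zeta$ for every $j \in \xi$.

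Second, by \eqref{eq:galoisG}, $G_\bH(\zeta) = \{ j : \eta_j \subseteq \zeta\}$. Thus $\xi \subseteq G_\bH(\zeta)$ says precisely that every $j \in \xi$ satisfies $\eta_j \subseteq \zeta$. This is the same condition, which proves the equivalence.

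Finally, I would translate this into binary-vector language to confirm that the two descriptions in \cref{def:galoisconnectionofH} agree. The entry $(\bx\bH)_i$ equals $1$ iff some $j$ with $x_j = 1$ has $h_{ji} = 1$, so $\supp(\bx\bH) = \bigcup_{j \in \supp \bx} \eta_j$. Likewise, $x^{H_j} = 1$ iff $\eta_j \subseteq \supp(\bx)$. The only care needed is with empty rows and an empty $\xi$. If $\eta_j = \varnothing$, then $j \in G_\bH(\zeta)$ for every $\zeta$, and $\eta_j$ contributes nothing to the union, so the equivalence is unaffected. If $\xi = \varnothing$, then $F_\bH(\xi) = \varnothing$ and both sides of the equivalence hold trivially. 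No earlier result is needed, and I do not expect a genuine obstacle; this notational check is the only place where care is required.
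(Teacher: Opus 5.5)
Your proof is correct and follows the same approach as the paper: both directions reduce to the condition that $\eta_j \subseteq \zeta$ for every $j \in \xi$, since $F_\bH(\xi)=\bigcup_{j\in\xi}\eta_j$ and $G_\bH(\zeta)=\{j : \eta_j\subseteq\zeta\}$. Your extra checks (that the binary-vector formulas agree with the set formulas, and the cases of empty rows and $\xi=\varnothing$) are harmless additions that the paper omits.
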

        
        \begin{proof}
            Let $\alpha \subseteq [r], \sigma \subseteq [n]$. Suppose $F_{\bH}(\alpha) \subseteq \sigma$. This means $\bigcup_{j\in\alpha} \eta_j \subseteq \sigma$. Therefore $\alpha \subseteq \{k : \eta_k \subseteq \sigma\} = G_{\bH}(\sigma)$

            For the converse, suppose $\alpha \subseteq G_\bH(\sigma)$. This means $\alpha \subseteq \{k : \eta_k \subseteq \sigma\}$. Then this gives us 
                \[ F_\bH(\alpha) = \bigcup_{j\in\alpha} \eta_j \subseteq \bigcup_{\{k: \eta_k \subseteq \sigma\}} \eta_k \subseteq \sigma.\]
            Thus we have shown $F_\bH(\alpha) \subseteq \sigma$ iff $\alpha \subseteq G_\bH(\sigma)$.
        \end{proof}

        The relationship between morphisms of codes and Boolean matrix multiplication suggests that there should be a way to express a morphism of codes in terms of Boolean algebra. Indeed, letting $\lnot$ denote the complement operator (e.g.\ $(\lnot\bH)_{ij} = 1 -\bH_{ij}$), we have the following formula.
        \begin{lem}\label{lem:morphismasmatrixoperations}
            Let $\bH$ be a Boolean matrix, and $G_\bH$ defined as above. Then for $\bx \in \B^n$, 
            $G_\bH(\bx) = \lnot((\lnot\bx)\bH^\top)$
        \end{lem}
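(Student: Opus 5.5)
The plan is a direct coordinatewise computation: we compare the $j$th entry of each side for an arbitrary $j \in [r]$, using De Morgan's laws in the Boolean semiring $\B$. First we fix dimensions. $\bH$ is $r\times n$ and $\bx\in\B^n$ is a row vector, so $\lnot\bx \in \B^n$ and $\bH^\top$ is $n \times r$. Hence $(\lnot\bx)\bH^\top \in \B^r$, which matches the codomain of $G_\bH$ in \cref{def:galoisconnectionofH}.

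Next we compute the $j$th coordinate of the Boolean product. By definition of Boolean matrix multiplication,
\[ \bigl((\lnot\bx)\bH^\top\bigr)_j = \bigvee_{i=1}^n (1-x_i)\wedge h_{ji}. \]
This entry equals $1$ exactly when there is some $i$ with $h_{ji}=1$ and $x_i = 0$. In set language, with $\xi = \supp(\bx)$ and $\eta_j = \supp(H_j)$, this says $\eta_j \not\subseteq \xi$. Negating, the $j$th coordinate of $\lnot((\lnot\bx)\bH^\top)$ equals $1$ if and only if $\eta_j\subseteq\xi$.

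By \eqref{eq:galoisG}, that condition is precisely the condition for $j \in G_\bH(\xi)$. Equivalently, in vector form the $j$th coordinate of $G_\bH(\bx)$ is
\[ \bx^{H_j} = \bigwedge_{i : h_{ji}=1} x_i, \]
which by De Morgan equals $\lnot \bigvee_{i:h_{ji}=1} \lnot x_i$. This is the expression computed in the previous step. Since the two sides agree in every coordinate $j$, they are equal.

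There is no substantial obstacle. The only points needing care are the conventions: $\bx$ is a row vector, so the transpose of $\bH$ is required, and $\lnot$ acts entrywise. The degenerate case $\eta_j=\varnothing$ should be checked explicitly: the empty product is $1$ and the empty join is $0$, so both sides give $1$ in that coordinate.
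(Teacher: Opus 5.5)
Your proposal is correct and follows essentially the same route as the paper: a coordinatewise comparison via De Morgan's law. The only difference is cosmetic. The paper uses the identity $x_i^{h_{ji}} = \lnot h_{ji} \vee x_i$ so that the meet runs over all $i \in [n]$, whereas you restrict the meet to $\{i : h_{ji}=1\}$ directly and add the set-theoretic reading $\eta_j \subseteq \xi$.
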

        \begin{proof}
            Note that $x_j^{h_{ij}}$ is equivalent to $\lnot h_{ij} \vee x_j$, from which it follows that
            \begin{align*}
                G_\bH(\bx)_i = \prod_{j=1}^n x_j^{h_{ij}} & = \bigwedge_{j=1}^n (x_j \vee \lnot h_{ij})  = \lnot\left(\bigvee_{j=1}^n (\lnot x_j \wedge h_{ij})\right)\qedhere
            \end{align*}
        \end{proof}

        From this we conclude the image of $F_\bH$ is union-complete and the image of $G_\bH$ is intersection-complete.
        \begin{lem}\label{lem:images}
            Let $\bH$ be a binary matrix and $F_\bH$, $G_\bH$ defined as above.
            \begin{enumerate}
                \item\label{lem:unionH} The image of $F_\bH$ is the union completion of $\bH$.
                \item\label{lem:intersectionH} The image of $G_\bH$ is the intersection completion of $\lnot\bH^\top$.
            \end{enumerate}
        \end{lem}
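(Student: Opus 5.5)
Both parts come down to identifying the image of each map with a set of unions (respectively intersections) of rows of a fixed matrix. For part (\ref{lem:unionH}), view $\bH$ as the code whose codewords are the row supports $\eta_1,\dots,\eta_r$. By \eqref{eq:galoisF}, $F_\bH(\xi) = \bigcup_{j\in\xi}\eta_j$. As $\xi$ ranges over $2^{[r]}$, this gives exactly the set of all unions of subfamilies of $\{\eta_1,\dots,\eta_r\}$. The case $\xi=\varnothing$ gives $\varnothing$, and singletons give each $\eta_j$ itself. This set is the union completion of the code of $\bH$, under the convention that the empty union (the empty set) is included. The only care needed is to fix that convention, i.e.\ that the union completion is taken to contain $\varnothing$.

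For part (\ref{lem:intersectionH}), I would use \cref{lem:morphismasmatrixoperations}: $G_\bH(\bx) = \lnot\big((\lnot\bx)\bH^\top\big)$. Write $\by = \lnot\bx$. As $\bx$ ranges over $\B^n$, so does $\by$, because complementation is a bijection. Also $\by\bH^\top = F_{\bH^\top}(\by)$. By part (\ref{lem:unionH}) applied to $\bH^\top$, the set $\{\by\bH^\top\}$ is the union completion of the rows of $\bH^\top$, i.e.\ of the columns of $\bH$. Complementing, the image of $G_\bH$ is $\{\lnot u : u \in \text{union completion of rows of }\bH^\top\}$. By De Morgan, the complement of a union of rows $R_{i}$ of $\bH^\top$ is the intersection of the rows $\lnot R_i$ of $\lnot\bH^\top$. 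Hence the image is the set of intersections of subfamilies of rows of $\lnot\bH^\top$, which is the intersection completion of $\lnot\bH^\top$.

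Again the boundary case matters: the empty family ($\by=0$, i.e.\ $\bx = \mathbf{1}$) gives the all-ones vector $[r]$. This matches $G_\bH([n]) = [r]$ and corresponds to the empty intersection. I expect the main (mild) obstacle to be precisely these conventions. One is whether the completions include the empty union $\varnothing$ and the empty intersection, the full set. The other is keeping track of transposes: rows of $\bH^\top$ indexed by $j\in[n]$ live in $\B^r$, consistent with $G_\bH$ landing in $2^{[r]}$. Beyond that, the argument is a direct unwinding of the definitions together with \cref{lem:morphismasmatrixoperations}.
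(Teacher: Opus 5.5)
Your proposal is correct and follows the paper's argument. Part (1) is read off directly from \eqref{eq:galoisF}, and part (2) comes from \cref{lem:morphismasmatrixoperations} by letting $\lnot\bx$ range over $\B^n$, applying part (1) to $\bH^\top$, and complementing via De Morgan. Your remark about conventions is apt: the image of $F_\bH$ always contains $\varnothing$ and the image of $G_\bH$ always contains $[r]$, so the completions must be understood to include the empty union and the empty intersection, a point the paper leaves implicit.
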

        \begin{proof}
            \cref{lem:unionH} is immediate from \eqref{eq:galoisF}.

            \cref{lem:intersectionH} follows from \cref{lem:morphismasmatrixoperations}. Letting $\bx$ range over all of $\B^n$, we see that the image of $G_\bH$ is the complement of the union-completion of $\bH^\top$ which is the intersection completion of the complement of $\bH^\top$.
        \end{proof}

        \begin{rmk}\label{rmk:residual}
            Luce showed that the maximal $\bV$ that satisfies $\bV\bH \leq \bC$ (where the comparison $\leq$ is carried out entrywise) is $\lnot((\lnot\bC) \bH^\top)$ \cite{luce1952note}*{Theorem 5.2}.
            Thus, the algebra of Boolean matrices is \emph{(right) residuated}: there is always a maximal $\bX$ satisfying $\bX\bA \leq \bB$.
            This matrix is denoted $\bB : \bA$. Therefore, from here we will write $F_\bH(\bx) = \bx\bH$ and $G_\bH(\bx) = \bx:\bH$.
        \end{rmk}
        
\section{Morphisms as matrix factorizations}\label{sec:adjoints}
    We have established that morphisms of codes and Boolean matrix multiplication form a Galois connection.
    Using this connection, we will now give a partial characterization those morphisms whose adjoint is their inverse.
    Then we will show this induces a weak bigrading on the poset $\pcode$.
    In the next section we apply these results to the problem of Boolean matrix factorization.
    
    \subsection{Factorizations giving morphisms} 
        Given a fixed matrix $\bH \in \B^{r \times n}$, we now establish which matrix products with $\bH$ give rise to morphisms of codes. The following proposition follows from basic properties of Galois connections \cite{Bergman}*{Lemma 6.3.1}, but we give a direct proof here.
        \begin{prop}\label{prop:imagefixedpoints}
            For $\bH$ a binary matrix, $F_\bH(\bx) = F_\bH(G_\bH(F_\bH(\bx)))$, that is, $\bx\bH = ((\bx\bH):\bH)\bH$.
        \end{prop}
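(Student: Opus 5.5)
The plan is to prove the identity $F_\bH = F_\bH \circ G_\bH \circ F_\bH$ by two inclusions, using only the adjunction of \cref{lem:galoisconnection} and monotonicity. I work with sets, writing $\xi \subseteq [r]$ for the support of $\bx$. Both maps are monotone: $F_\bH(\xi) = \bigcup_{j\in\xi}\eta_j$ is clearly increasing in $\xi$, and $G_\bH(\sigma) = \{j : \eta_j \subseteq \sigma\}$ is clearly increasing in $\sigma$.

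First I extract the unit of the Galois connection. Applying \cref{lem:galoisconnection} with $\alpha = \xi$ and $\sigma = F_\bH(\xi)$, the trivial containment $F_\bH(\xi) \subseteq F_\bH(\xi)$ gives
\[ \xi \subseteq G_\bH(F_\bH(\xi)). \]
Applying the monotone map $F_\bH$ to this yields the inclusion $F_\bH(\xi) \subseteq F_\bH(G_\bH(F_\bH(\xi)))$.

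Next I extract the counit. Applying the adjunction with $\alpha = G_\bH(\sigma)$, the trivial containment $G_\bH(\sigma) \subseteq G_\bH(\sigma)$ gives
\[ F_\bH(G_\bH(\sigma)) \subseteq \sigma \quad \text{for every } \sigma \subseteq [n]. \]
This is also immediate from the definitions, since $F_\bH(G_\bH(\sigma))$ is the union of those $\eta_j$ contained in $\sigma$. Specializing to $\sigma = F_\bH(\xi)$ gives the reverse inclusion $F_\bH(G_\bH(F_\bH(\xi))) \subseteq F_\bH(\xi)$, and the two inclusions together prove the statement.

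Finally, I translate back to matrix notation via the identifications $F_\bH(\bx) = \bx\bH$ and $G_\bH(\bx) = \bx:\bH$ from \cref{rmk:residual}, which gives $\bx\bH = ((\bx\bH):\bH)\bH$. There is no real obstacle here. The only care needed is keeping straight which map is the lower adjoint, so that the unit and counit inclusions point in the right directions; \cref{lem:galoisconnection} fixes $F_\bH$ as lower and $G_\bH$ as upper.
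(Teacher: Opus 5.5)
Your proposal is correct and follows the paper's proof: both use the unit inclusion $\bx \leq G_\bH(F_\bH(\bx))$ and the monotonicity of $F_\bH$ for one direction, and the counit inclusion $F_\bH(G_\bH(\by)) \leq \by$ specialized to $\by = F_\bH(\bx)$ for the other. Your assignment of lower and upper adjoints matches \cref{lem:galoisconnection}, so both inclusions point the right way.
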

        \begin{proof}
            Starting with the trivial statement $F_\bH(\bx) \leq F_\bH(\bx)$ and applying \cref{lem:galoisconnection} we have $\bx \leq G_\bH(F_\bH(\bx))$. From the definition it is clear $F_\bH$ is order-preserving, so $F_\bH(\bx) \leq F_\bH(G_\bH(F_\bH(\bx)))$. On the other hand, starting from $G_\bH(\by) \leq G_\bH(\by)$ we get $F_\bH(G_\bH(\by)) \leq \by$. Applying $F_\bH \circ G_\bH$ to $F_\bH(\bx)$ we get $F_\bH(G_\bH(F_\bH(\bx))) \leq F_\bH(\bx)$ and so equality follows.
        \end{proof}
        \begin{cor}\label{lem:useHtogetfactorization}
            Let $\bH$ be a $r \times n$ binary matrix and $F_\bH$ and $G_\bH$ as above. Let $\bC$ be a matrix representing the image of $F_\bH$, and let $\bV$ be obtained by applying $G_\bH$ to each row of $\bC$. Then $\bC = \bV\bH$ as a Boolean product.
        \end{cor}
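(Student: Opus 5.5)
This is a row-by-row application of \cref{prop:imagefixedpoints}. The plan is to fix a row of $\bC$, write it as $F_\bH(\bx)$ for some $\bx$, and read off the corresponding row of $\bV\bH$.

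First, I will record that Boolean matrix multiplication acts row by row. For an $m\times r$ matrix $\bV$ with rows $V_1,\dots,V_m$, the $k$th row of $\bV\bH$ is the Boolean product $V_k\bH = F_\bH(V_k)$. This is immediate from the definition of the Boolean product and \eqref{eq:galoisF}. So it suffices to show, for each row $C_k$ of $\bC$, that $F_\bH(G_\bH(C_k)) = C_k$.

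Second, I will use the hypothesis that $\bC$ represents the image of $F_\bH$. Every row $C_k$ is then a codeword in $\im F_\bH$, so there is some $\bx_k\in\B^r$ with $C_k = F_\bH(\bx_k) = \bx_k\bH$. By construction the $k$th row of $\bV$ is $V_k = G_\bH(C_k) = C_k:\bH$. Applying \cref{prop:imagefixedpoints},
\[ V_k\bH = F_\bH(G_\bH(F_\bH(\bx_k))) = F_\bH(\bx_k) = C_k. \]
Assembling the rows then gives $\bC=\bV\bH$.

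There is no real obstacle here; the only care needed is bookkeeping. First, $\bC$ may list codewords with repetition or in any order, which is harmless because the argument is row by row. Second, the choice of preimage $\bx_k$ does not matter, since $\bV$ is defined through $G_\bH(C_k)$ rather than through $\bx_k$. It is also worth checking that the vector–matrix conventions (row vectors multiplied on the right by $\bH$) match \cref{def:galoisconnectionofH}, so that $F_\bH(V_k)$ really is the $k$th row of $\bV\bH$.
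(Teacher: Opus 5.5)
Your proposal is correct and is exactly the intended argument. The paper states this corollary without a separate proof, as an immediate consequence of \cref{prop:imagefixedpoints}, and your row-by-row application of $F_\bH(G_\bH(F_\bH(\bx)))=F_\bH(\bx)$ to each row $C_k=F_\bH(\bx_k)$ is precisely that deduction.
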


        Note the order of operations here: Given $\bH$, we compute the image of $F_\bH$. Writing this as a matrix and applying $G_\bH$ to each row results in a Boolean matrix factorization of the image of $F_\bH$.
        However, it is \emph{not} the case that, reading the mapping between rows in an arbitrary matrix produce $\bC = \bV'\bH$ will yield a morphism of codes, as the following example shows.
        \begin{ex}
            Continuing \cref{ex:nonuniquemonomialmap}, 
            the following are distinct factorizations of $\bC$:
            \[\underbrace{\begin{bmatrix}
                0 & 0 & 0 & 0\\
                1 & 1 & 0 & 0\\
                0 & 1 & 1 & 0\\
                0 & 0 & 1 & 1\\
                1 & 1 & 1 & 0\\
                0 & 1 & 1 & 1\\
                1 & 1 & 1 & 1
                \end{bmatrix}}_{\mathbf{C}}
                =
                \underbrace{\begin{bmatrix}
                0 & 0 & 0\\
                1 & 0 & 0\\
                0 & 1 & 0\\
                0 & 0 & 1\\
                1 & 1 & 0\\
                0 & 1 & 1\\
                1 & \mathbf{1} & 1
                \end{bmatrix}}_{\mathbf{V}}
                \underbrace{\begin{bmatrix}
                1 & 1 & 0 & 0\\
                0 & 1 & 1 & 0\\
                0 & 0 & 1 & 1
                \end{bmatrix}}_{\bH}
                =
                \underbrace{\begin{bmatrix}
                0 & 0 & 0\\
                1 & 0 & 0\\
                0 & 1 & 0\\
                0 & 0 & 1\\
                1 & 1 & 0\\
                0 & 1 & 1\\
                1 & \mathbf{0} & 1
                \end{bmatrix}}_{\bV'}
                \underbrace{\begin{bmatrix}
                1 & 1 & 0 & 0\\
                0 & 1 & 1 & 0\\
                0 & 0 & 1 & 1
                \end{bmatrix}}_{\bH}
                \]
            The difference is highlighted by the bold entry in the bottom row. The map $\bC \to \bV$ is a morphism of codes (in fact, the morphism described in \cref{ex:nonuniquemonomialmap}), while the map $\bC \to \bV'$ is not.
        \end{ex}

        \begin{cor}\label{lem:Hmaximalfactorizationmorphism}
            Suppose $\bC = \bV\bH$ as a Boolean product of matrices.
            If $\bV = (\bV\bH):\bH$, then the map taking rows of $\bC$ to rows of $\bV$ is a morphism of codes.
        \end{cor}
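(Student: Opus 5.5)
The idea is to identify the row map $\bC \to \bV$ with the restriction of $G_\bH$ to the code represented by $\bC$. Once that identification is made, the morphism property is inherited directly from the fact that $G_\bH$ is the morphism represented by the rows of $\bH$.

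Let $\cC$ be the code whose codewords are the rows of $\bC$, and let $C_k$, $V_k$ denote the $k$th rows of $\bC$ and $\bV$. Since $\bC = \bV\bH$, we have $C_k = V_k\bH$ for every $k$. Residuation is computed row by row, because $(\bx:\bH)_i = \lnot((\lnot \bx)\bH^\top)_i$ depends only on $\bx$. Hence the hypothesis $\bV = (\bV\bH):\bH = \bC:\bH$ says exactly that $V_k = C_k:\bH = G_\bH(C_k)$ for each $k$.

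The first thing to check is that the assignment $C_k \mapsto V_k$ is actually a function on $\cC$. If rows $k$ and $\ell$ of $\bC$ coincide, then $V_k = G_\bH(C_k) = G_\bH(C_\ell) = V_\ell$. So the row map is well defined, and it equals the restriction $G_\bH|_\cC \colon \cC \to \cD$, where $\cD$ is the code formed by the rows of $\bV$. This step is the only place the hypothesis is used. Without it, as the example with $\bV'$ shows, equal or comparable rows of $\bC$ could be sent to rows not determined by $G_\bH$.

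It remains to see that $G_\bH|_\cC$ is a morphism of codes. By \eqref{eq:galoisG}, $G_\bH(\sigma) = \{j : \eta_j \subseteq \sigma\}$, which is the map $f_t$ of \eqref{eq:morphismsets} with $t = (\eta_1,\dots,\eta_r)$. So $G_\bH|_\cC$ sends $\sigma \mapsto \{j : \sigma \in \tk_\cC(\eta_j)\}$, which is the form \eqref{eq:morphismtrunks} with trunks $T_j = \tk_\cC(\eta_j)$. Directly, the preimage of a trunk $\tk_\cD(\alpha)$ is
\[
\{\sigma \in \cC : \alpha \subseteq G_\bH(\sigma)\} = \{\sigma \in \cC : F_\bH(\alpha) \subseteq \sigma\} = \tk_\cC\bigl(F_\bH(\alpha)\bigr)
\]
by \cref{lem:galoisconnection}, and this is a trunk of $\cC$. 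So preimages of trunks are trunks, and the map is a morphism.
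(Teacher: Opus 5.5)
Your proposal is correct and takes the same route as the paper: the hypothesis $\bV = (\bV\bH):\bH$ identifies the row map with $G_\bH$ restricted to the rows of $\bC$, and $G_\bH$ is the morphism represented by the rows of $\bH$. You also supply details the paper leaves implicit, namely well-definedness on repeated rows and a direct check, via the Galois connection, that the preimage of $\tk_\cD(\alpha)$ is the trunk $\tk_\cC(F_\bH(\alpha))$.
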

        \begin{proof}
            By assumption $\bC = \bV\bH$ and the map $\bC \to \bC : \bH = (\bV\bH):\bH$ is a morphism of codes. Indeed, when the equation $\bV = (\bV\bH):\bH$ is satisfied, this is the morphism $G_\bH$ in \eqref{eq:galoisG}.
        \end{proof}
        \noindent We call a matrix $\bV$ with this property \emph{$\bH$-maximal}. 
        Continuing \cref{rmk:residual}, we note that $\bH$-maximal matrices also appear in \cite{plemmons1971generalized}*{Lemma 1.1}, in which Plemmons shows that for a fixed $\bC,\bH$, the equation $\bC = \bV\bH$ has a solution if and only if $\bV = \bC:\bH$ is a solution.

    \subsection{Adjoints of morphisms} 
        To extend our results to arbitrary morphisms, 
        we need a way to extend the domain and codomain of a morphism of codes to the full Boolean lattice. For any morphism $g: \cC \to \cD$ with $\cC \subseteq 2^{[n]}$ and $\cD \subseteq 2^{[m]}$, we will say $\tilde{g}: 2^{[n]} \to 2^{[m]}$ is a \emph{lift} of $g$ if $\tilde{g}|_\cC = g$.

        \begin{lem}
            Let $g: \cC \subseteq 2^{[n]} \to \cD \subseteq 2^{[r]}$ be a morphism of codes such that the image of $g$ does not have any trivial neurons.
            Then the representatives of $g$ are in bijection with the lifts of $g$.
        \end{lem}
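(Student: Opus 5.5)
The plan is to read ``lift'' as a lift $\tilde g: 2^{[n]} \to 2^{[r]}$ that is itself a morphism of codes; with arbitrary set maps the statement fails, since these would be unconstrained off $\cC$. The bijection will be $t = (\tau_1,\dots,\tau_r) \mapsto f_t$, where $f_t$ is the morphism $2^{[n]} \to 2^{[r]}$ represented by $t$ via \eqref{eq:morphismsets}, i.e.\ $G_\bT$ for the matrix $\bT$ of $t$. The key fact is that trunks of the full Boolean lattice have unique generators: for $\tau \subseteq [n]$, the root of $\tk_{2^{[n]}}(\tau)$ is $\tau$ itself, because $\tau$ belongs to the trunk. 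Hence $\gens(\tk_{2^{[n]}}(\tau)) = \{\tau\}$ by \cref{lem:root} together with the definition of generators.

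\emph{Well-definedness.} The hypothesis that $g(\cC)$ has no trivial neurons guarantees that each $g^{-1}(\tk_\cD(i))$ is nonempty, so representatives exist. Let $t$ be a representative. Then $f_t$ is a morphism of codes on $2^{[n]}$, since it is given by the tuple of trunks $\tk_{2^{[n]}}(\tau_i)$. For $\sigma \in \cC$ we have $i \in f_t(\sigma)$ iff $\tau_i \subseteq \sigma$ iff $\sigma \in \tk_\cC(\tau_i) = g^{-1}(\tk_\cD(i))$ iff $i \in g(\sigma)$. So $f_t|_\cC = g$, and $f_t$ is a lift.

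\emph{Injectivity.} Suppose $t \ne t'$, say $\tau_i \ne \tau'_i$. Without loss of generality $\tau'_i \not\subseteq \tau_i$. Evaluating at $\sigma = \tau_i$ gives $i \in f_t(\tau_i)$ but $i \notin f_{t'}(\tau_i)$, so $f_t \neq f_{t'}$.

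\emph{Surjectivity.} This is the step needing the most care. Let $h$ be a lift that is a morphism. Each $h^{-1}(\tk(i))$ is a trunk of $2^{[n]}$ and contains the nonempty set $g^{-1}(\tk_\cD(i))$, so it is nonempty. It is therefore generated by a subset of $[n]$: a generator outside $[n]$ would give the empty trunk, which is the subtlety allowed by the convention $\tk_\cC([n+1]) = \varnothing$. By uniqueness this generator is a single $\tau_i \subseteq [n]$, and then $h = f_t$ for $t = (\tau_1,\dots,\tau_r)$ by \eqref{eq:morphismtrunks}. Intersecting with $\cC$ gives
\[ g^{-1}(\tk_\cD(i)) = h^{-1}(\tk(i)) \cap \cC = \tk_\cC(\tau_i), \]
so $\tau_i \in \gens(g^{-1}(\tk_\cD(i)))$. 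Thus $t$ is a representative of $g$, which completes the bijection.
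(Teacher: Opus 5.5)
Your proof is correct and follows the paper's argument. You send a representative $t$ to the lift $f_t$, prove injectivity by evaluating at a differing generator, and prove surjectivity by showing that a representative of a lift is a representative of $g$; the paper tacitly assumes, as you state explicitly, that lifts are morphisms, since otherwise they would have no representatives. Your injectivity step also orients the test correctly (if $\tau_i' \not\subseteq \tau_i$, evaluate at $\tau_i$), whereas the paper's write-up evaluates at $\tau_j$ under the hypothesis $\tau_j \not\subseteq \tau_j'$, which does not separate the two maps when $\tau_j' \subsetneq \tau_j$.
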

        \begin{proof}
            Given a representative $t = (\tau_1,\dots,\tau_r)$ of $g$, let $g_t: 2^{[n]} \to 2^{[r]}$ be defined by
                \[ g_t(\sigma) = \{j \in [r] : \tau_j \subseteq \sigma\}\]
            (c.f. formula \eqref{eq:morphismsets}).
            By definition, $g_t|_\cC = g$,
            thus every representative of $g$ yields a lift of $g$.
            
            Suppose $t' = (\tau_1', \dots,\tau_r')$ differs from $t$ in position $j$, so $\tau_j \neq \tau_j'$.
            Then, without loss of generality, $\tau_j \not\subseteq \tau_j'$ and so $g_t(\tau_j) \neq g_{t'}(\tau_j)$
            and so the mapping from representatives to lifts is one-to-one.

            Finally, let $\tilde{g}$ be a lift of $g$. If $\tilde{t} = (\tilde{\tau}_1, \dots, \tilde{\tau}_2)$ is a representative of $\tilde{g}$, then $\tilde{t}$ is a representative of $g$ by definition.
            Thus the mapping from representative to lifts is onto, completing the proof. 
        \end{proof}

        \begin{defn}\label{def:canonicallift}
            Let $f: \cC \subseteq 2^{[n]} \to \cD \subseteq 2^{[k]}$ be a morphism of codes defined by trunks $T_1,\dots, T_k \subseteq \cC$.
            Set $\tau_i = \sqrt{T_i}$ for each $i \in [k]$.
            We define the \emph{canonical representative} of $f$ to be $(\tau_1,\dots,\tau_k)$, and \emph{canonical lift} of $f$ to be the morphism $f_t: 2^{[n]} \to 2^{[m]}$.
        \end{defn}

        \begin{rmk}\label{rmk:hatlift}
            We note that the canonical lift is an extension of the map presented in \cite{jeffs2025coveringrelationsposetcombinatorial}*{Lemma 13}, which shows that for any morphism $f: \cC \to \cD$ there is a unique map $\hat{f}: \widehat{\cC} \to \widehat{\cD}$ such that $\hat{f}|_\cC = f$. The underlying construction is identical, we simply present it using representatives of morphisms rather than discussing the trunks directly. 
        \end{rmk}

        \begin{defn}\label{def:morphismadjoint}
            Let $f: \cC \subseteq 2^{[n]} \to \cD \subseteq 2^{[k]}$ be a surjective morphism of codes. The \emph{canonical adjoint} of $f$ is the map $f^\top: \cD \to 2^{[n]}$ defined by
            \begin{align}
                f^\top(d) = \bigcup_{j\in d} \sqrt{f^{-1}(\tk_\cD(j))}\label{eq:ftop}
            \end{align}
            Note that the sets $\sqrt{f^{-1}(\tk_\cD(j))}$ are precisely the sets appearing in the canonical representative of $f$.
            We say \emph{$f$ is a Boolean Matrix Factorization (BMF)} if $f^\top \circ f$, restricted to its image, is the identity map on $\cC$. In this case we say \emph{$\cD$ is a factor of $\cC$}.
        \end{defn}

        \begin{lem}\label{lem:adjointiscanonical}
            The map $f^\top$ is the Galois adjoint of the canonical lift of $f$, restricted to $\cD$.
        \end{lem}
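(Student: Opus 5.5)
The claim reduces to unwinding definitions and identifying the canonical lift with a map of the form $G_\bH$ from \cref{def:galoisconnectionofH}. Let $f:\cC\subseteq 2^{[n]}\to\cD\subseteq 2^{[k]}$ be surjective, with defining trunks $T_j = f^{-1}(\tk_\cD(j))$. Let $(\tau_1,\dots,\tau_k)$ be its canonical representative, $\tau_j=\sqrt{T_j}$, and let $\bH$ be the $k\times n$ binary matrix whose $j$th row has support $\tau_j$, so that $\eta_j=\tau_j$ in the notation of \cref{def:galoisconnectionofH}.

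\textbf{Step 1: the canonical lift is $G_\bH$.} By \cref{def:canonicallift}, the canonical lift is $f_t(\sigma)=\{j:\tau_j\subseteq\sigma\}$ for $\sigma\subseteq[n]$. Comparing with \eqref{eq:galoisG}, this is exactly $G_\bH(\sigma)$. Surjectivity of $f$ ensures each $T_j$ is nonempty (for $j$ non-trivial in $\cD$), so the roots are defined. If $\cD$ has trivial neurons, I would adopt the convention $\sqrt{\varnothing}=[n]$, the empty intersection in $2^{[n]}$. Then $\tk_\cC([n])$ is empty or consists of $[n]$ alone. I will treat this edge case by remarking that the same formula still defines $f$ on $\cC$ provided $[n]\notin\cC$, and otherwise note it explicitly. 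This bookkeeping is the only real obstacle, and I would handle it briefly rather than let it drive the argument.

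\textbf{Step 2: identify the adjoint.} By \cref{lem:galoisconnection}, the lower adjoint of $G_\bH$ is $F_\bH(\xi)=\bigcup_{j\in\xi}\eta_j=\bigcup_{j\in\xi}\tau_j$. Galois adjoints are unique: the lower adjoint of an order-preserving $G$ is determined by $F(a)=\min\{b: a\le G(b)\}$. Hence $F_\bH$ is \emph{the} Galois adjoint of the canonical lift, and the choice of Galois connection involves no ambiguity.

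\textbf{Step 3: restrict to $\cD$.} For $d\in\cD$, substituting $\tau_j=\sqrt{f^{-1}(\tk_\cD(j))}$ gives
\[ F_\bH(d)=\bigcup_{j\in d}\sqrt{f^{-1}(\tk_\cD(j))}, \]
which is \eqref{eq:ftop} verbatim. So $F_\bH|_\cD=f^\top$, completing the proof.
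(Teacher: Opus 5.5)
Your proof is correct and follows the paper's argument: you identify the canonical lift with $G_\bH$ for the matrix $\bH$ whose rows are the roots $\sqrt{T_j}$, read off its lower adjoint $F_\bH$ from \cref{lem:galoisconnection}, and restrict to $\cD$ to recover \eqref{eq:ftop}. Your extra remarks, on the uniqueness of lower adjoints and on a convention for empty trunks, are care the paper's proof omits, but they do not change the route.
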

        \begin{proof}
            The canonical lift of $f$ is represented by
            \[ t = (\sqrt{f^{-1}(\tk_\cD(1))},\dots,\sqrt{f^{-1}(\tk_\cD(k))}). \]
            By \cref{def:galoisconnectionofH} (see \eqref{eq:galoisF}), the adjoint of of $f_t$ is defined by sending $d \subseteq [r]$ to the union of the representative sets indexed by $d$, which is exactly the expression in \eqref{eq:ftop}.
        \end{proof}
        \noindent Note that \cref{lem:useHtogetfactorization,lem:adjointiscanonical} justify referring to a morphism $f: \bC \to \bD$ satisfying $f^\top \circ f = id_\bC$ as a Boolean matrix factorization: given such a morphism $f$, the matrix $\bH$ representing the canonical lift of $f$ is exactly the matrix such that $\bC = G_\bH(\bC)\bH = (\bC : \bH)\bH$.
        In light of \cref{lem:Hmaximalfactorizationmorphism}, given a factorization $\bC = \bV\bH$, we have $\bC = (\bV\bH: \bH)\bH$.
        Thus, we can use these notions of factorization interchangeably.

        The property of being a BMF is, in some sense, intrinsic to the partial order $\pcode$.
        That is, if $f: \cC \to \cD$ is a BMF, then there is a map $f': \rho(\cC) \to \rho(\cD)$ that is also a BMF (recall from \cref{rmk:reductionmorphism} $\rho$ is the reduction morphism).
        We will show this in two stages, first by showing you can reduce $\cC$ while maintaining a BMF, and then by reducing $\cD$.
        \begin{lem}\label{lem:BMFreducesource}
            Let $f: \cC \subseteq 2^{[n]} \to \cD \subseteq2^{[k]}$ be a BMF. Then there is a BMF $\rho(\cC) \to \cD$.
        \end{lem}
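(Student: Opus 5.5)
The plan is to build the new BMF as $f' = f \circ \rho^{-1}$, where $\rho^{-1}: \rho(\cC) \to \cC$ is the inverse isomorphism. By \cref{rmk:reductionmorphism}, $\rho$ is an isomorphism of codes, so $f'$ is a surjective morphism $\rho(\cC) \to \cD$. It then remains to check that $(f')^\top \circ f' = \mathrm{id}_{\rho(\cC)}$, and I will do this by comparing canonical representatives.

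Write $\rho(\cC) \subseteq 2^{[n']}$ with $[n'] \subseteq [n]$ the surviving neurons, so that $\rho$ acts by $\sigma \mapsto \sigma \cap [n']$ (\cref{lem:rhoisprojection}). Trunks of $\rho(\cC)$ correspond to trunks of $\cC$ under $\rho$. For each $j \in [k]$, set $T_j = f^{-1}(\tk_\cD(j))$ and $T'_j = (f')^{-1}(\tk_\cD(j)) = \rho(T_j)$.

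The key claim is that the roots are compatible with the projection, namely
\[ \sqrt{T'_j} = \sqrt{T_j} \cap [n']. \]
This holds because intersecting commutes with restricting to $[n']$:
\[ \bigcap_{\sigma \in T_j}(\sigma \cap [n']) = \Bigl(\bigcap_{\sigma\in T_j}\sigma\Bigr) \cap [n']. \]
Feeding this into \eqref{eq:ftop} gives, for $d \in \cD$,
\[ (f')^\top(d) = \bigcup_{j \in d} \bigl(\sqrt{T_j}\cap[n']\bigr) = f^\top(d) \cap [n']. \]

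With this in hand the identity follows directly. For $\sigma' = \rho(\sigma) \in \rho(\cC)$ we get
\[ (f')^\top(f'(\sigma')) = f^\top(f(\sigma)) \cap [n'] = \sigma \cap [n'] = \sigma'. \]
The middle equality uses the hypothesis that $f$ is a BMF.

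The main obstacle I expect is making sure that $\rho$ really is the coordinate projection $\sigma \mapsto \sigma\cap[n']$ with inverse a morphism. This matters in particular for deleting redundant neurons, where $\rho^{-1}$ reinserts $i$ exactly when $\sigma \supseteq$ the set $i$ is redundant to. It also matters for trivial neurons, where $\rho^{-1}$ just adds nothing. Once this is settled, the rest follows by induction on single neuron deletions, or directly from the global description above. I also need the trunks $T'_j$ to be nonempty so that roots are defined; this is inherited from the $T_j$ because $\rho$ is a bijection.
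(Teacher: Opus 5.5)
Your proof is correct and follows the same idea as the paper's: truncate the canonical representative to the surviving neurons, then observe that $(f')^\top\circ f'$ is just $f^\top\circ f$ followed by intersection with $[n']$. The differences are in packaging. You work in one step via $f' = f\circ\rho^{-1}$ instead of inducting on single redundant-neuron deletions, so the agreement $f'\circ\rho = f$ comes for free rather than being checked through the redundancy relation. You also state explicitly the identity $\sqrt{T'_j} = \sqrt{T_j}\cap[n']$, which the paper's computation of $f_{t'}^\top$ uses without saying so.
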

        \begin{proof}
            Without loss of generality, suppose $n$ is redundant to $\sigma$ in $\cC$.
            Let $t = (\tau_1,\dots,\tau_k)$ be the canonical representative of $f$, and set $t' = (\tau_1\cap[n-1],\dots,\tau_k\cap[n-1])$.

            First, we claim that $f_t(c) = f_{t'}(c\cap[n-1])$, which requires $\tau_j \subseteq c$ if and only if $\tau_j\cap[n-1] \subseteq c\cap[n-1]$.
            We prove the converse direction.
            If $n\not \in \tau_j$ then \[ \tau_j=\tau_j \cap [n-1] \subseteq c \cap [n-1]\subseteq c \] yields $\tau_j \subseteq c$.
            Suppose now that $n \in \tau_j$.
            By assumption, $n$ is redundant to some $\sigma \subseteq [n-1]$, so we must have $\sigma\subseteq\tau_j$.
            But then we also have $\sigma \subseteq \tau_j \cap [n-1] \subseteq c\cap[n-1]$, which implies $n \in c$.
            Then \[ \tau_j = \tau_j\cap[n-1]\cup n \subseteq c\cap [n-1] \cup n = c \] yields $\tau_j \subseteq c$ as desired, completing the proof of the claim.

            Direct evaluation then shows
            \begin{align*}
                f_{t'}^\top(f_{t'}(c \cap [n-1])) & = f_{t'}^\top(f_t(c))\\
                    & = \bigcup_{j \in f_t(c)} \tau_j \cap [n-1]\\
                    & = \left(\bigcup_{j\in f_t(c)} \tau_j\right) \cap [n-1]\\
                    & = f_t^\top(f_t(c)) \cap [n-1] = c \cap [n-1].
            \end{align*}
            Therefore, $f_{t'}$ is a BMF from $\cC$ with neuron $n$ deleted to $\cD$. The statement of the theorem follows by permuting neurons in $\cC$ if necessary and inducting on the number of redundant neurons in $\cC$.
        \end{proof}

        \begin{lem}\label{lem:redundancyinimage}
            Let $f: \cC \subseteq2^{[n]} \to \cD\subseteq2^{[k]}$ be a surjective morphism of codes, and suppose $k$ is redundant in $\cD$.
            For any representative $t = (\tau_1,\dots,\tau_k)$ of $f$, the tuple $t' = (\tau_1,\dots,\tau_{k-1})$ represents composition of $f$ with deletion of neuron $k$ in $\cD$.
            In particular, the image of $f_{t'}$ is isomorphic to the image of $f$.
        \end{lem}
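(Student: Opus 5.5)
The plan is to check directly that $f_{t'}$ agrees pointwise with $\pi\circ f$. Here $\pi:\cD\to 2^{[k-1]}$ denotes deletion of neuron $k$, i.e.\ $\pi(d)=d\cap[k-1]$. The point is that the first $k-1$ coordinates of $f$ are computed from $\tau_1,\dots,\tau_{k-1}$ alone. After that, I will invoke the fact that deleting a redundant neuron is an isomorphism onto its image.

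\textbf{Step 1: identification.} For $c\in\cC$, formula \eqref{eq:morphismsets} gives
\[ f(c)=f_t(c)=\{j\in[k]:\tau_j\subseteq c\}, \qquad f_{t'}(c)=\{j\in[k-1]:\tau_j\subseteq c\}. \]
Hence $f_{t'}(c)=f(c)\cap[k-1]=\pi(f(c))$, so $f_{t'}=\pi\circ f$ on $\cC$. This uses only that $t$ is a representative. Since $f$ is surjective, no $f^{-1}(\tk_\cD(j))$ is empty, so representatives make sense. It also follows that $t'$ is a representative of $\pi\circ f$: because $\pi^{-1}(\tk_{\pi(\cD)}(j))=\tk_\cD(j)$ for $j<k$, we get $(\pi\circ f)^{-1}(\tk(j))=f^{-1}(\tk_\cD(j))=\tk_\cC(\tau_j)$.

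\textbf{Step 2: the image.} By surjectivity of $f$, the image of $f_{t'}$ is $\pi(\cD)$. It remains to show $\pi:\cD\to\pi(\cD)$ is an isomorphism. Suppose $k$ is redundant to $\sigma\subseteq[k-1]$. Then by \cref{lem:characterizeredundant}, for every $d\in\cD$ we have $k\in d$ iff $\sigma\subseteq d$, i.e.\ iff $\sigma\subseteq\pi(d)$. This has two consequences:
\begin{itemize}
\item $\pi$ is injective, since $d$ is recovered as $\pi(d)$ together with $k$ exactly when $\sigma\subseteq\pi(d)$.
\item The inverse map $\pi(d)\mapsto d$ is the morphism represented by $(\{1\},\dots,\{k-1\},\sigma)$, so it is a morphism of codes.
\end{itemize}
The map $\pi$ itself is a morphism, represented by $(\{1\},\dots,\{k-1\})$. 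Thus $\pi$ is an isomorphism, and $\operatorname{im} f_{t'}\cong\cD=\operatorname{im} f$. Alternatively, this is exactly the reduction step cited in \cref{rmk:reductionmorphism}.

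\textbf{Main obstacle.} The only subtle point is bookkeeping. The isomorphism in Step 2 relies on $\sigma\subseteq[k-1]$, which is guaranteed by the definition of redundancy. I would also note explicitly that representatives require nonempty preimages of simple trunks, and that these are supplied by surjectivity.
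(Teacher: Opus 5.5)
Your proof is correct and follows the paper's argument. The paper likewise observes $f_{t'}(c)=f(c)\setminus k$ and then asserts that deleting the redundant neuron $k$ is an isomorphism; you additionally verify this by exhibiting the inverse morphism represented by $(\{1\},\dots,\{k-1\},\sigma)$. One harmless slip: surjectivity alone does not make every $f^{-1}(\tk_\cD(j))$ nonempty, since it is empty whenever $j$ is trivial in $\cD$. That nonemptiness instead comes from the hypothesis that a representative $t$ exists, and your argument never actually uses it.
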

        \begin{proof}
            Note that $f(\sigma) = f_t(\sigma) = \{j \in [k] : \tau_j \subseteq \sigma\}$ and so $f_{t'}(\sigma) = f_t(\sigma) \setminus k$. Since $k$ is redundant in $\cD$, deletion of $k$ is an isomorphism.
        \end{proof}

        \begin{thm}\label{lem:bmfredundancy}
            Suppose $f: \cC \to \cD$ is a BMF with $\cC \subseteq 2^{[n]}$, $\cD \subseteq 2^{[k]}$.
            If $\cC$ is reduced, 
            then there is a BMF $\cC \to \rho(\cD)$. 
        \end{thm}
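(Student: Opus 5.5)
The plan is to reduce $\cD$ one neuron at a time, following the recipe in \cref{rmk:reductionmorphism}: first delete trivial neurons, then delete redundant neurons one at a time. At each step I will check that composing $f$ with the deletion is again a BMF. The conclusion then follows by induction on the number of neurons deleted, since $\rho(\cD)$ is, up to permutation, the result of this sequence of deletions. So it suffices to treat a single deletion of a neuron $k$ of $\cD$ that is either trivial or redundant.

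\emph{Trivial neurons.} Since $f$ is surjective, a trivial neuron $k$ of $\cD$ has $f^{-1}(\tk_\cD(k)) = \varnothing$. Hence $k \notin f(c)$ for every $c \in \cC$, and $k$ contributes nothing to the union \eqref{eq:ftop}. Let $f'$ be $f$ followed by deletion of $k$, which is again surjective onto the deleted code. Then $f'^\top(f'(c)) = f^\top(f(c)) = c$, so $f'$ is a BMF. The same reasoning shows that trivial neurons of $\cC$ play no role; these are absent anyway because $\cC$ is reduced.

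\emph{Redundant neurons.} Suppose $k$ is redundant in $\cD$ to some $\alpha \subseteq [k-1]$, and let $t = (\tau_1,\dots,\tau_k)$ be the canonical representative of $f$. By \cref{lem:redundancyinimage}, $t' = (\tau_1,\dots,\tau_{k-1})$ represents $f' = \pi\circ f$, where $\pi$ deletes neuron $k$. Since $f'^{-1}(\tk(j)) = f^{-1}(\tk_\cD(j))$ for $j<k$, the tuple $t'$ is also the canonical representative of $f'$. Therefore
\[ f'^\top(f'(c)) = \bigcup_{j \in f(c)\setminus k} \tau_j, \]
and this must equal $\bigcup_{j\in f(c)}\tau_j = c$. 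It suffices to show $\tau_k \subseteq \bigcup_{j\in\alpha}\tau_j$ whenever $k \in f(c)$; since then $\alpha \subseteq f(c)$, this gives the equality.

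\emph{Main obstacle: the inclusion $\tau_k \subseteq \bigcup_{j\in\alpha}\tau_j$.} Redundancy gives $f^{-1}(\tk_\cD(k)) = \bigcap_{j\in\alpha} f^{-1}(\tk_\cD(j))$, so $\tau_k = \sqrt{\bigcap_{j\in\alpha} T_j}$ with $T_j = f^{-1}(\tk_\cD(j))$. In general the root of an intersection of trunks can be larger than the union of the roots. This is where the hypothesis that $\cC$ is reduced must enter.

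I would argue as follows. Set $\beta = \bigcup_{j\in\alpha}\tau_j$. Then $\tk_\cC(\beta) = \bigcap_{j \in \alpha} T_j = T_k$, so both $\beta$ and $\tau_k$ generate $T_k$, and $\beta \subseteq \tau_k$ by \cref{lem:root}. Now take $i \in \tau_k\setminus\beta$. Every codeword containing $\beta$ lies in $T_k$ and hence contains $i$, so $\tk_\cC(i) \supseteq \tk_\cC(\beta)$. To derive a contradiction with reducedness, I need the reverse inclusion $\tk_\cC(i) \subseteq \tk_\cC(\beta)$, which would make $i$ redundant to $\beta \subseteq [n]\setminus i$.

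For this I will use the BMF property. Take any $c \in \cC$ with $i \in c$. We have $c = \bigcup_{j\in f(c)}\tau_j$, so $i \in \tau_j$ for some $j \in f(c)$. I then need to relate $\tau_j$ to $\beta$, for instance by applying the BMF identity to codewords in $T_k$ and comparing their roots. I expect this step to require the most care. If the direct contradiction fails, the fallback is to replace $t$ by the representative $(\tau_1,\dots,\tau_{k-1},\beta)$. This is valid because $\beta \in \gens(T_k)$. I would then check that, with $\cC$ reduced, this representative still reconstructs each $c$ as a union, which would force $\beta = \tau_k$.
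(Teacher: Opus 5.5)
The step you flag as the main obstacle cannot be completed. The inclusion $\tau_k\subseteq\bigcup_{j\in\alpha}\tau_j$ can fail with $\cC$ reduced, and so can the weaker condition the deletion actually needs, namely $\tau_k\subseteq\bigcup_{j\in f(c)\setminus k}\tau_j$ for every $c\in T_k$.

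Take $\cC=\{\varnothing,1,2,123,34\}\subseteq 2^{[4]}$. This code is reduced: the roots $\sqrt[\cC]{i}$ are $1,2,3,34$, while $\tk_\cC(\varnothing)=\cC$ and $\tk_\cC(3)=\{123,34\}\neq\tk_\cC(4)$, so the Redundancy Lemma finds no redundant neuron. Let $f$ be given by the trunks $T_1=\tk_\cC(1)$, $T_2=\tk_\cC(2)$, $T_3=\tk_\cC(12)=\{123\}$ and $T_4=\tk_\cC(4)=\{34\}$, whose roots are $1,2,123,34$. Then $\cD=f(\cC)=\{\varnothing,1,2,123,4\}$. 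Moreover $f^\top\circ f=\mathrm{id}_\cC$; for example $f^\top(f(123))=1\cup 2\cup 123=123$ and $f^\top(f(34))=34$. So $f$ is a BMF, and neuron $3$ of $\cD$ is redundant to $\{1,2\}$.

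Yet $\tau_3=123\not\subseteq\tau_1\cup\tau_2=12$, and after deleting neuron $3$ you get $f'^\top(f'(123))=12\neq 123$. The reverse inclusion you want, $\tk_\cC(3)\subseteq\tk_\cC(\beta)=\{123\}$, is broken by the codeword $34$. That codeword is exactly what makes neuron $3$ non-redundant, and the BMF property rebuilds it from the root $\tau_4=34$. Since $\tau_4\not\subseteq 123$, this tells you nothing about $\beta$. Your fallback fails for the same reason: $(1,2,12,34)$ does not reconstruct $123$.

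In fact the statement itself is false for this example: no BMF $\cC\to\rho(\cD)\cong\{\varnothing,1,2,12,3\}$ exists. A BMF into a code on $3$ neurons would give a factorization $\bC=\bV\bH$ in which $\bH$ has $3$ rows and each row of $\bC$ is a union of rows of $\bH$ contained in it. This forces four distinct rows:
\begin{itemize}
\item the row $\{1\}$;
\item the row $\{2\}$;
\item a row $\eta$ with $4\in\eta\subseteq 34$;
\item a row $\eta'$ with $3\in\eta'\subseteq 123$.
\end{itemize}
Hence $\brank(\bC)=4$.

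The paper's proof breaks at the same spot. From a codeword $\gamma\in\tk_\cC(\ell)\setminus T_k$ and $f^\top(f(\gamma))=\gamma$, it obtains some $j\in f(\gamma)$ with $\ell\in\sqrt{T_j}$. It then treats $j$ as an element of $f(c)$, which does not follow; here $\ell=3$, $\gamma=34$ and $j=4\notin f(123)$.

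What your argument does correctly establish is the following:
\begin{itemize}
\item the case of trivial neurons of $\cD$;
\item a neuron $k$ redundant to a single neuron $j$, where $T_k=T_j$ and so $\tau_k=\tau_j$;
\item a neuron $k$ redundant to $\varnothing$, where $T_k=\cC$ and $\tau_k=\bigcap\cC=\varnothing$ because $\cC$ is reduced.
\end{itemize}
Non-simple redundancies in $\cD$ cannot in general be removed while keeping a BMF.
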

        \begin{proof}
            We will show that, if $k$ is redundant to $\sigma \subseteq[k-1]$ in $\cD$, then composing $f$ with the deletion of neuron $k$ is a BMF.

            For each $i \in [k]$, set $T_i = f^{-1}(\tk_\cD(i))$, and set $t = (\sqrt{T_1},\dots,\sqrt{T_k})$ a representative of $f$. Our goal is to show that the map represented by $t' = (\sqrt{T_1},\dots,\sqrt{T_{k-1}})$ is also a Boolean matrix factorization. Since $k$ is redundant in $\cD$, from \cref{lem:redundancyinimage} we know that the image of  $f_{t'}$ is exactly $\cD$ with $k$ deleted, so all that remains to show is that $f_{t'}^\top (f_{t'}(c)) = c$ for all $c \in \cC$.
            
            First, we claim that $f_{t'}^\top(f_{t'}(c)) = f^\top(f(c)\setminus k)$.
            That $f_{t'}(c) = f(c) \setminus k$ follows from the definition of $f_{t'}$.
            Writing $f_{t'}^\top(d) = \bigcup_{j\in d} \sqrt{T_j}$ we see that this agrees with $f^\top(d)$ when $k \notin d$.
            
            Consider some $c \in \cC$ and let $d = f(c)$.
            If $c \notin T_k$, then $k \notin d$, so from above we have $f_{t'}(c) = f(c)$ and $f_{t'}^\top (f_{t'}(c)) = c$.

            Now consider the case $c \in T_k$, so $k \in d$.
            Since $k$ is redundant to $\sigma$, $d \in \tk_\cD(\sigma)$ and so we can write $d$ as the disjoint union $d = d' \cup \sigma \cup k$. Then
            \begin{align*}
                f^\top(d) = \bigcup_{j\in d'} \sqrt{T_j} \cup \bigcup_{j\in\sigma} \sqrt{T_j} \cup \sqrt{T_k}\tag{*}\label{eq:needtoreducefactor}
            \end{align*}
            We will show that $\sqrt{T_k}$ is unnecessary in this union.
            
            Since $k$ is redundant to $\sigma$, we have $\tk_\cD(k) = \tk_\cD(\sigma) = \bigcap_{j\in\sigma} \tk_\cD(j)$.
            Applying $f^{-1}$ to both sides we have $T_k = \bigcap_{j\in\sigma} T_j$ and so, taking the root of both sides and applying \cref{lem:rootsofintersections},
            \[ \sqrt{T_k} = \sqrt{\bigcap_{j\in\sigma} T_j} \supseteq \bigcup_{j\in\sigma} \sqrt{T_j}. \]
            If $\sqrt{T_k} = \bigcup_{j\in\sigma} \sqrt{T_j}$, then clearly $\sqrt{T_k}$ is unnecessary in \eqref{eq:needtoreducefactor}.

            Suppose, on the other hand, there is some $\ell \in \sqrt{T_k} \setminus \bigcup_{j\in\sigma}\sqrt{T_j}$. If $\tk_\cC(\ell) = \tk_\cC(\sqrt{T_k}) =T_k$
            then $\ell$ is redundant to $\bigcup_{j\in\sigma} \sqrt{T_j}$
            because
                \[ \tk_\cC\Bigl(\bigcup_{j\in\sigma} \sqrt{T_j}\Bigr) = \bigcap_{j\in\sigma} \tk_\cC(\sqrt{T_j}) = \bigcap_{j\in\sigma} T_j = T_k = \tk_\cC(\ell). \]
            Since $\cC$ is reduced, therefore, we must have $\tk_\cC(\ell) \supsetneq T_k$.
            Consider $\alpha \in \tk_\cC(\ell) \setminus T_k$. Since $\ell \in \alpha$, $k \notin f(\alpha)$, and $f^\top(f(\alpha)) = \alpha$, we see from \eqref{eq:needtoreducefactor} that there must be some $j \in d'$ such that $\ell \in \sqrt{T_j}$. Once again, we conclude that $\sqrt{T_k}$ is unnecessary in \eqref{eq:needtoreducefactor}.

            Therefore, $f_{t'}$ is a BMF.
            The statement of the lemma follows by permuting neurons in $\cD$ if necessary and inducting on the number of redundant neurons in $\cD$.
        \end{proof}

    \subsection{Adjoints of Covering Maps}\label{sec:adjointsofcovering}
        A code $\cC$ \emph{covers} a code $\cD$ if $\cC \geq \cD$ but they are not isomorphic, and for any $\cE$ satisfying $\cC \geq \cE \geq \cD$, we have $\cC \cong \cE$ or $\cE \cong \cD$.
        We now show how to determine which factors of a code $\cC$ are covered by it.
        
        The construction below appears in \cite{jeffs2019sunflowers}*{Definition 3.19}; 
        we explicitly index the trunks to simplify the development that follows.

        \begin{defn}\label{def:excessivecovering}
            Let $\cC \subseteq 2^{[n]}$ and $i \in [n]$. For $j = 1,\dots, 2n$ define the following trunks:
            \[ T_j = \begin{cases} \tk_\cC(j) & j\leq n \text{ and } \tk_\cC(j) \neq \tk_\cC(i)\\
                \tk_\cC(j-n) \cap \tk_\cC(i) & j > n \text{ and } \tk_\cC(j-n) \cap \tk_\cC(i) \neq \tk_\cC(i)\\
                \varnothing & \text{o.w.}
            \end{cases} \]
            Let $f_i$ be the morphism defined by these trunks, and $\cC^{(i)}$ the image of $f_i$. This is the \emph{$i$-th covering map of $\cC$} and $\cC^{(i)}$ is its \emph{$i$-th covered code.}
            
            When $\cC$ is reduced, the explicit formula for $f_i$ is
                \begin{align}
                    f_i(c) = (c\setminus i) \cup \Bigl\{j+n : ij\subseteq c, \tk_\cC(i) \cap \tk_\cC(j) \neq \tk_\cC(i)\Bigr\} \label{eq:coveringformula}
                \end{align}
        \end{defn}
        \begin{prop}[\cite{jeffs2019sunflowers}*{Theorem 3.20}]
            If $\cC$ is reduced, then for any $i$, $\cC$ covers $f_i(\cC)$.
        \end{prop}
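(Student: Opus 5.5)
The plan is to compare codes in $\pcode$ by counting trunks. For a surjective morphism $f:\cC\to\cD$, the map $T\mapsto f^{-1}(T)$ sends trunks of $\cD$ to trunks of $\cC$, and it is injective because $f$ is surjective. So the number of trunks $t(\cD)$ is at most $t(\cC)$. I would prove three facts: (a) equality $t(\cD)=t(\cC)$ forces $f$ to be an isomorphism; (b) $t(\cC^{(i)})=t(\cC)-1$ when $\cC$ is reduced; (c) $\cC\not\cong\cC^{(i)}$. The covering statement then follows. Suppose $\cC\geq\cE\geq\cC^{(i)}$. Then $t(\cC)\geq t(\cE)\geq t(\cC)-1$, so $t(\cE)$ equals $t(\cC)$ or $t(\cC^{(i)})$. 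By (a), applied to the surjection $\cC\to\cE$ or $\cE\to\cC^{(i)}$, we get $\cE\cong\cC$ or $\cE\cong\cC^{(i)}$. Since $f_i$ is surjective onto $\cC^{(i)}$ by definition, $\cC\geq\cC^{(i)}$ holds.

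For (a), suppose the pullback map on trunks is a bijection. Then every trunk of $\cC$, in particular each simple trunk $\tk_\cC(j)$, has the form $f^{-1}(\tk_\cD(\tau_j))$. First, $f$ is injective. If $f(\sigma)=f(\sigma')$, then $\sigma$ and $\sigma'$ lie in the same pullback trunks, hence in the same simple trunks of $\cC$, so $\sigma=\sigma'$. Second, $f^{-1}$ pulls back each simple trunk $\tk_\cC(j)$ to $\tk_\cD(\tau_j)$, which is a trunk. Hence $f^{-1}$ is a morphism, and $f$ is an isomorphism. The same argument also proves (c) directly: an isomorphism $\cC\to\cC^{(i)}$ would make every trunk of $\cC$ a pullback, which contradicts (b).

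For (b), I use that the trunks of $\cC^{(i)}$ pulled back along $f_i$ are exactly the intersections of the defining trunks $T_j$ from \cref{def:excessivecovering}. The empty intersection gives $\cC$ itself. I would show these intersections are exactly the trunks of $\cC$ other than $\tk_\cC(i)$.

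First, $\tk_\cC(i)$ is not such an intersection. Every $T_j$ in an intersection equal to $\tk_\cC(i)$ must contain $\tk_\cC(i)$. The nonempty $T_j$ with $j>n$ are proper subsets of $\tk_\cC(i)$, so only indices $j\leq n$ with $\tk_\cC(j)\neq\tk_\cC(i)$ can occur. Such an intersection has the form $\tk_\cC(\sigma)$ with $i\notin\sigma$. It cannot be empty either, since $i$ is not trivial in a reduced code. Equality $\tk_\cC(\sigma)=\tk_\cC(i)$ would then make $i$ redundant, contradicting reducedness.

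Conversely, take any nonempty trunk $\tk_\cC(\sigma)\neq\tk_\cC(i)$. If $i\notin\sigma$, it is $\bigcap_{j\in\sigma}T_j$; here I use that for $j\neq i$ we have $\tk_\cC(j)\neq\tk_\cC(i)$, again by reducedness. If $i\in\sigma$, it equals $\bigcap_{j\in\sigma\setminus i}\bigl(\tk_\cC(j)\cap\tk_\cC(i)\bigr)$. Every factor with $\tk_\cC(j)\cap\tk_\cC(i)=\tk_\cC(i)$ can be dropped. At least one factor remains, because otherwise the intersection would be $\tk_\cC(i)$. The empty trunk needs separate care: I would check that it is either a pullback, for instance of $\tk_{\cC^{(i)}}([2n+1])$, or a trunk of both codes, so that the count still drops by exactly one.

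I expect (b) to be the main obstacle, specifically the bookkeeping of the degenerate cases: empty trunks, empty $T_j$, and the index convention in \cref{def:excessivecovering}. The conceptual content lies in (a) together with the observation that $\tk_\cC(i)$ is the only trunk that is lost.
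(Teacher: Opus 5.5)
Your proposal is correct. It is essentially the trunk-counting argument behind the cited result, which the paper does not reprove but relies on later: the injective pullback of trunks, the fact that a surjective morphism preserving $t$ is an isomorphism (\cite{jeffs2019sunflowers}*{Proposition 3.16}), and $t(\cC^{(i)})=t(\cC)-1$ because, for reduced $\cC$, the pullbacks of trunks of $\cC^{(i)}$ are exactly the trunks of $\cC$ other than $\tk_\cC(i)$. Your handling of the degenerate cases is also sound: reducedness gives $\varnothing\neq\tk_\cC(i)\neq\cC$ and $\tk_\cC(j)\neq\tk_\cC(i)$ for $j\neq i$, and the empty trunk is already the pullback $T_i=\varnothing$ of the trunk of $\{i\}$ in $\cC^{(i)}$.
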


        \begin{rmk}
            The above definition will result in at least two trivial neurons in the image, since $T_i$ and $T_{n+i}$ are both empty.
            However, the formula given above simplifies the ensuing discussion.
            In light of \cref{lem:bmfredundancy}, if $\cC$ is reduced it does not matter what representative we pick for $\cC^{(i)}$ -- if any representative is a factor of $\cC$, then so is $\rho(\cC^{(i)})$.
        \end{rmk}

        \begin{lem}\label{lem:norootinkernel}
            Let $\cC \subseteq 2^{[n]}$ and suppose $i \in [n]$ is not redundant.
            Let $f_i$ be the $i$-th covering map of $\cC$. Then $\sqrt[\cC]{i} \notin \im f_i^\top \circ f_i$.
        \end{lem}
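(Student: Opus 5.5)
The plan is to show directly that the element $i$ never appears in any output of $f_i^\top \circ f_i$ whose value lies inside $\sqrt[\cC]{i}$. Since $i \in \sqrt[\cC]{i}$ by definition, this forces $f_i^\top(f_i(c)) \neq \sqrt[\cC]{i}$ for every $c \in \cC$.

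Write $r = \sqrt[\cC]{i}$ and let $T_1,\dots,T_{2n}$ be the trunks of \cref{def:excessivecovering}. By \eqref{eq:ftop}, for $c\in\cC$ we have
\[ f_i^\top(f_i(c)) = \bigcup_{j :\, c \in T_j} \sqrt{T_j}. \]
Only nonempty trunks contribute to this union, since $c \notin \varnothing$. Suppose, for contradiction, that this union equals $r$ for some $c$.

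Because $i \in r$, some $j$ with $c \in T_j$ has $i \in \sqrt{T_j}$ and $\sqrt{T_j} \subseteq r$. I will derive two containments for this $T_j$.
\begin{enumerate}
\item From $i \in \sqrt{T_j}$: every codeword of $T_j$ contains $i$, so $T_j \subseteq \tk_\cC(i)$.
\item From $\sqrt{T_j} \subseteq r$: trunks are order-reversing, so $\tk_\cC(r) \subseteq \tk_\cC(\sqrt{T_j})$. By \cref{lem:root}, $\sqrt{T_j}$ generates $T_j$, so $\tk_\cC(\sqrt{T_j}) = T_j$. Likewise $r$ generates $\tk_\cC(i)$, so $\tk_\cC(r) = \tk_\cC(i)$. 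Hence $\tk_\cC(i) \subseteq T_j$.
\end{enumerate}
Together these give $T_j = \tk_\cC(i)$.

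This contradicts the construction. In both nonempty cases of \cref{def:excessivecovering}, namely $j \le n$ and $j > n$, the definition explicitly excludes trunks equal to $\tk_\cC(i)$; in those cases $T_j$ is set to $\varnothing$, which cannot contain $c$. So no such $j$ exists, and $i \notin f_i^\top(f_i(c))$, contradicting $i \in r$.

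I expect the main subtlety to be step 2, the identity $\tk_\cC(\sqrt{T}) = T$ for nonempty trunks $T$. It rests on \cref{lem:root}, which says the root is the maximal generator, together with the empty-trunk convention. The non-redundancy hypothesis on $i$ ensures $\tk_\cC(i)$ is not itself produced as some $T_j$ through a different index, for instance a $j$ with $\tk_\cC(j) = \tk_\cC(i)$. However, the definition already excludes exactly those trunks, so I will check carefully whether the hypothesis is genuinely needed or only keeps the setup consistent with \eqref{eq:coveringformula}.
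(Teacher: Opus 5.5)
Your proof is correct, and it takes a slightly different and more general route than the paper's. The paper starts from the explicit formula \eqref{eq:ftopf}, $f_i^\top(f_i(c)) = (c\setminus i)\cup\bigcup \sqrt[\cC]{ij}$, and assumes $f_i^\top(f_i(c))\subseteq\sqrt[\cC]{i}$. It then uses \cref{lem:trunkrootcontainment} to see that every $\sqrt[\cC]{ij}$ in the big union strictly contains $\sqrt[\cC]{i}$, so the union is empty and the output is $c\setminus i$, which misses $i$.

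You instead keep the abstract expression $\bigcup_{j:\,c\in T_j}\sqrt{T_j}$. You show directly that a contributing trunk whose root contains $i$ and lies inside $\sqrt[\cC]{i}$ must equal $\tk_\cC(i)$, which is the same underlying fact as \cref{lem:trunkrootcontainment}. You then note that \cref{def:excessivecovering} never outputs $\tk_\cC(i)$ as a nonempty $T_j$.

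This settles the question you leave open at the end: your argument never uses non-redundancy. It works for any $i$ with $\tk_\cC(i)\neq\varnothing$, which is exactly what is needed for $\sqrt[\cC]{i}$ to be defined. In the paper the hypothesis enters only through \eqref{eq:coveringformula}, and hence \eqref{eq:ftopf}, in two places:
\begin{enumerate}
\item If $\tk_\cC(j)=\tk_\cC(i)$ for some $j\neq i$, the first part of $f_i(c)$ is not $c\setminus i$, so the formula itself fails.
\item Collapsing the $j\le n$ contributions $\sqrt[\cC]{j}$ to $c\setminus i$ is not automatic. When $\tk_\cC(j)\subsetneq\tk_\cC(i)$, the set $\sqrt[\cC]{j}$ contains $i$. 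Such a term is harmless only because it then equals $\sqrt[\cC]{ij}$, which already appears in the big union. The paper does not spell this out.
\end{enumerate}
So your route avoids that bookkeeping and drops an unnecessary hypothesis. The paper's explicit formula earns its keep later, since the same expression drives \cref{lem:squeezeandcollision} and \cref{thm:coveringBMF}.
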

        \begin{proof}
            From \eqref{eq:coveringformula}, we have
            \begin{align}
                f_i^\top(f_i(c)) = (c \setminus i) \cup \bigcup_{\substack{ij\subseteq c \\\tk_\cC(ij) \neq \tk_\cC(i)}} \sqrt[\cC]{ij}. \label{eq:ftopf}
            \end{align}
            Suppose $f_i^\top(f_i(c)) \subseteq \sqrt[\cC]{i}$.
            By \cref{lem:trunkrootcontainment}, we know that any $j$ which satisfies the condition of the big union in \eqref{eq:ftopf} must have $\sqrt[\cC]{ij} \supsetneq \sqrt[\cC]{i}$, and so
            \[ \Bigl\{ij \subseteq c : \tk_\cC(j) \cap \tk_\cC(i) \neq \tk_\cC(i)\Bigr\} = \varnothing. \]
            If this set were nonempty then some $j \in \cC$ would contribute $\sqrt[\cC]{\{i,j\}}$ to the union in \eqref{eq:ftopf}.
            But then $f_i^\top(f_i(c)) \supseteq \sqrt[\cC]{ij} \supsetneq \sqrt[\cC]{i}$, contradicting the supposition $f_i^\top(f_i(c)) \subseteq \sqrt[\cC]{i}$.
            So it must be empty and this means that $f_i^\top(f(c)) = c\setminus i \subsetneq \sqrt[\cC]{i}$.
            Therefore, $\sqrt[\cC]{i} \notin \im f_i^\top \circ f_i$.
        \end{proof}

        \noindent Thus, we have a necessary condition for a covered code to be a factor:

        \begin{cor}\label{cor:BMFimpliesfree}
            Let $\cC \subseteq 2^{[n]}$ and $f_i$ its $i$-th covering map. If $f_i$ is a BMF, then $\sqrt[\cC]{i} \notin \cC$ (that is, $i$ is free).
        \end{cor}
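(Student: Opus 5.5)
We argue by contraposition, taking \cref{lem:norootinkernel} as the main input. Suppose $i$ is rooted, so $\sqrt[\cC]{i} \in \cC$. We want to conclude that $f_i$ is not a BMF. By \cref{def:morphismadjoint}, $f_i$ being a BMF means $f_i^\top(f_i(c)) = c$ for every $c \in \cC$. Hence every codeword of $\cC$ lies in the image of $f_i^\top \circ f_i$. In particular, $\sqrt[\cC]{i} = f_i^\top(f_i(\sqrt[\cC]{i}))$ would lie in $\im f_i^\top \circ f_i$, which contradicts \cref{lem:norootinkernel}.

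The only point needing care is the hypothesis of \cref{lem:norootinkernel}, namely that $i$ is not redundant. Covering maps are set up for reduced codes: $\cC$ covers $f_i(\cC)$ under that assumption, and the explicit formula \eqref{eq:coveringformula} is stated only in the reduced case. So the plan is to read the corollary with $\cC$ reduced, as in the surrounding discussion, in which case $i$ is automatically not redundant.

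One should also note that $\tk_\cC(i)$ is nonempty, since a reduced code has no trivial neurons. This guarantees that $\sqrt[\cC]{i}$ is defined, so that the dichotomy rooted/free makes sense.

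If one wants to avoid assuming that $\cC$ is reduced, we handle the redundant case separately. If $i$ is redundant to $\sigma$, then $\tk_\cC(i)=\tk_\cC(\sigma)$ for some $\sigma \subseteq [n]\setminus i$. The first step of the argument for \cref{lem:norootinkernel} should then be revisited.

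This degenerate case is the main obstacle. I would simply state that the corollary is intended for reduced $\cC$, since that is where $f_i$ is a genuine covering map. With that understanding, the proof is the two-line contrapositive above.
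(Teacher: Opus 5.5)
Your proof is correct and is the paper's argument: a BMF makes $f_i^\top\circ f_i$ the identity on $\cC$, so a rooted $\sqrt[\cC]{i}\in\cC$ would lie in $\im f_i^\top\circ f_i$, which contradicts \cref{lem:norootinkernel}. Your caveat is also fair, since the paper invokes \cref{lem:norootinkernel} without stating that $i$ is non-redundant, and reading the corollary for reduced $\cC$ as in \cref{thm:coveringBMF} is the intended setting; the redundant case you left open in fact also goes through directly, because $f_i(\sqrt[\cC]{i})$ consists only of those $j\in\sqrt[\cC]{i}$ with $\tk_\cC(j)\supsetneq\tk_\cC(i)$, none of whose roots contain $i$, so $i\notin f_i^\top(f_i(\sqrt[\cC]{i}))$.
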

        \begin{proof}
            Suppose $f_i$ is a BMF, that is, $f_i^\top\circ f_i = id_\cC$. By \cref{lem:norootinkernel}, $\sqrt[\cC]{i} \notin \im f_i^\top \circ f_i = \cC$.
        \end{proof}

        Note that \cref{eq:ftopf} implies that covering maps are ``nearly'' injective:
        \begin{lem}\label{lem:squeezeandcollision}
            Let $\cC$ be a reduced code and $f_i$ its $i$-th covering map. Then we have the following:
            \begin{enumerate}
                \item\label{lem:squeeze} (Squeeze lemma) For any $\sigma \in \cC$, we have $\sigma \setminus i \subseteq f_i^\top(f_i(\sigma)) \subseteq \sigma.$
                \item\label{lem:collision} (Weak collision lemma) If $\sigma \neq \tau$ are distinct codewords such that $f_i(\sigma) = f_i(\tau)$, then $i \notin \sigma$ and $\tau = \sigma\cup i$
                \item\label{lem:strongcollision} (Strong collision lemma) If $\sigma\neq \tau$, then $f_i(\sigma) = f_i(\tau)$ if and only if $\sigma = \sqrt[\cC]{i} \setminus i$ and $\tau = \sqrt[\cC]{i}$.
            \end{enumerate}
        \end{lem}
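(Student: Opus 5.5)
The plan is to read off all three parts directly from the explicit formula \eqref{eq:coveringformula} and its consequence \eqref{eq:ftopf}, both valid because $\cC$ is reduced. The key structural observation is that $f_i(c)$ splits into a ``low'' part $f_i(c)\cap[n] = c\setminus i$ and a ``high'' part
\[ \{j+n : ij\subseteq c,\ \tk_\cC(ij)\neq\tk_\cC(i)\}. \]
The low part is exactly $c\setminus i$ because, in a reduced code, $\tk_\cC(j)\neq\tk_\cC(i)$ for every $j\neq i$; otherwise $i$ would be redundant to $\{j\}$.

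\emph{Squeeze lemma.} By \eqref{eq:ftopf}, $f_i^\top(f_i(\sigma))$ is the union of $\sigma\setminus i$ with sets $\sqrt[\cC]{ij}$ for certain $j$ with $ij\subseteq\sigma$. The lower bound $\sigma\setminus i\subseteq f_i^\top(f_i(\sigma))$ is then immediate. For the upper bound, note that whenever $ij\subseteq\sigma$ we have $\sigma\in\tk_\cC(ij)$. The root $\sqrt[\cC]{ij}$ is the intersection of all members of that trunk, so $\sqrt[\cC]{ij}\subseteq\sigma$, and the whole union lies in $\sigma$.

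\emph{Weak collision lemma.} Suppose $f_i(\sigma)=f_i(\tau)$ with $\sigma\neq\tau$. Comparing low parts gives $\sigma\setminus i=\tau\setminus i$. So the two codewords differ only in whether they contain $i$, and after relabeling we may take $i\notin\sigma$ and $\tau=\sigma\cup i$.

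\emph{Strong collision lemma.} This is the main step. For the forward direction, assume $f_i(\sigma)=f_i(\tau)$ with $\sigma\neq\tau$, and use the weak collision lemma to write $\tau=\sigma\cup i$ with $i\notin\sigma$.
\begin{itemize}
\item Since $i\notin\sigma$, the high part of $f_i(\sigma)$ is empty, so the high part of $f_i(\tau)$ must be empty too.
\item Hence every $j\in\tau$ satisfies $\tk_\cC(ij)=\tk_\cC(i)$. This means $j$ lies in every codeword containing $i$, i.e.\ $j\in\sqrt[\cC]{i}$. Therefore $\tau\subseteq\sqrt[\cC]{i}$.
\item Conversely, $\tau\in\tk_\cC(i)$, so $\sqrt[\cC]{i}\subseteq\tau$.
\end{itemize}
Together these give $\tau=\sqrt[\cC]{i}$ and $\sigma=\sqrt[\cC]{i}\setminus i$.

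For the converse, suppose both $\sqrt[\cC]{i}\setminus i$ and $\sqrt[\cC]{i}$ are codewords. Their low parts agree. Every $j\in\sqrt[\cC]{i}$ satisfies $\tk_\cC(ij)=\tk_\cC(i)$, so the high part of $f_i(\sqrt[\cC]{i})$ is empty. The high part of $f_i(\sqrt[\cC]{i}\setminus i)$ is empty because it omits $i$. Hence the images coincide.

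The only delicate point throughout is justifying that the low part equals $c\setminus i$, which is exactly where reducedness enters.
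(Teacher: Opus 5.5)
Your proposal is correct and follows essentially the same route as the paper. All three parts are read off from \eqref{eq:coveringformula} and \eqref{eq:ftopf}; the weak collision lemma comes from comparing the parts of $f_i(\sigma)$ and $f_i(\tau)$ lying in $[n]$; and the forward strong collision direction uses the emptiness of the ``high'' part of $f_i(\tau)$ to force $\tau \subseteq \sqrt[\cC]{i}$. You also make explicit two details the paper leaves implicit: where reducedness makes the low part exactly $c \setminus i$, and why each $\sqrt[\cC]{ij} \subseteq \sigma$.
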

        \begin{proof}
            \cref{lem:squeeze} follows from \cref{eq:ftopf}.
            \cref{lem:collision} follows because $f_i(\sigma) = f_i(\tau)$ requires $\sigma\setminus i = \tau \setminus i$.

            \cref{lem:strongcollision}.
            $(\Leftarrow)$ $f_i(\sqrt[\cC]{i}\setminus i) = f_i(\sqrt[\cC]{i}) = \sqrt[\cC]{i}\setminus i$ from all the preceding definitions and lemmas.
            
            $(\Rightarrow)$ Suppose $f_i(\sigma) = f_i(\tau)$ with $\sigma\neq \tau$.
            By \cref{lem:collision}, we must have (without loss of generality) $\tau = \sigma\cup i$, with $i\notin \sigma$. Note that since $\tau \in \cC$ and $i \in \tau$, we must have $\sqrt[\cC]{i} \subseteq \tau$.
            It remains to show that $\tau \subseteq \sqrt[\cC]{i}$ to complete the proof.
            
            Since $i \notin \sigma$, we have $f_i(\sigma) = \sigma$ and so $f_i(\tau) = \sigma$.
            This means we have
                \begin{align*}
                    \{j + n : ij\subseteq \tau, \sqrt[\cC]{ij} \supsetneq \sqrt[\cC]{i}\} = \varnothing
                    \quad \text{which implies}\quad \{ j \in \sigma : \sqrt[\cC]{ij} \supsetneq \sqrt[\cC]{i}\} = \varnothing
                \end{align*}
                which means that for all $j \in \sigma$, we have $\sqrt[\cC]{ij} = \sqrt[\cC]{i}$. Therefore, $\sigma \subseteq \sqrt[\cC]{i}$.
                In fact, the containment must be strict since $i \notin \sigma$, and so in particular we must have $\tau = \sigma \cup i \subseteq \sqrt[\cC]{i}$, completing the proof.
        \end{proof}

        We now show that the condition in \cref{cor:BMFimpliesfree} is sufficient.

        \begin{thm}\label{thm:coveringBMF}
            Let $\cC \subseteq 2^{[n]}$ be a reduced code and $f_i: \cC \to \cC^{(i)}$ the $i$-th covering map.
            Then $f_i$ is a BMF if and only if $i$ is free.
        \end{thm}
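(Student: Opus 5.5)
The forward direction is already done: if $f_i$ is a BMF, then \cref{cor:BMFimpliesfree} shows that $i$ is free. (Since $\cC$ is reduced, $i$ is not redundant, so \cref{lem:norootinkernel} applies.) The work is therefore the converse. Assuming $\sqrt[\cC]{i}\notin\cC$, we must show $f_i^\top(f_i(\sigma))=\sigma$ for every $\sigma\in\cC$.

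By the squeeze lemma (\cref{lem:squeezeandcollision}\eqref{lem:squeeze}), $\sigma\setminus i\subseteq f_i^\top(f_i(\sigma))\subseteq\sigma$. So the only codewords that can fail are those with $i\in\sigma$, and for these it suffices to show $i\in f_i^\top(f_i(\sigma))$. By \eqref{eq:ftopf}, this means finding some $j\in\sigma$ with $\tk_\cC(ij)\neq\tk_\cC(i)$. Any such $j$ works, because every codeword of $\tk_\cC(ij)$ contains $i$, so $i\in\sqrt[\cC]{ij}$, and that root then lies in the union in \eqref{eq:ftopf}.

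I will argue by contradiction. Suppose $i\in\sigma$ and every $j\in\sigma$ satisfies $\tk_\cC(ij)=\tk_\cC(i)$. Then every codeword containing $i$ contains each $j\in\sigma$, so $\sigma\subseteq\sqrt[\cC]{i}$. On the other hand, $\sigma\in\tk_\cC(i)$, so $\sqrt[\cC]{i}\subseteq\sigma$. Hence $\sigma=\sqrt[\cC]{i}$, and this root lies in $\cC$, contradicting freeness of $i$. (For $j=i$ the condition holds trivially, so the real content is about $j\in\sigma\setminus i$; if $\sigma=\{i\}$, the same argument still gives $\sigma=\sqrt[\cC]{i}$.) The same conclusion can be read off the strong collision lemma \cref{lem:squeezeandcollision}\eqref{lem:strongcollision}: the only potential failure of injectivity, and hence of the identity $f_i^\top\circ f_i=\mathrm{id}$, sits at $\sqrt[\cC]{i}$, which is not a codeword when $i$ is free.

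The main obstacle is bookkeeping rather than ideas. I need \eqref{eq:ftopf} to hold exactly as stated for reduced $\cC$, with the trunks $T_{j+n}$ having roots $\sqrt[\cC]{ij}$ and the trunks $T_j$ for $j\ne i$ contributing exactly $c\setminus i$. The latter needs $\sqrt{\tk_\cC(j)}\subseteq c\setminus i$ whenever $j\in c$, which uses that $i$ is not redundant, so $i\notin\sqrt[\cC]{j}$ unless $\tk_\cC(j)=\tk_\cC(i)$, a case that is excluded. I would verify this containment carefully, since it is what makes the squeeze lemma and the final equality $f_i^\top(f_i(\sigma))=\sigma$ go through.
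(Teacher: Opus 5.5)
Your argument is correct and is the paper's proof in contrapositive form. The paper uses freeness to get $\sqrt[\cC]{i}\subsetneq\sigma$, picks $j\in\sigma\setminus\sqrt[\cC]{i}$, and invokes \cref{lem:trunkrootcontainment} to get $\tk_\cC(ij)\neq\tk_\cC(i)$. It then concludes with the squeeze lemma exactly as you do.

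Two of your asides are wrong, although the proof does not depend on either.

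First, it is false that $i\notin\sqrt[\cC]{j}$ unless $\tk_\cC(j)=\tk_\cC(i)$. In the reduced code $\cC=\{\varnothing,1,12\}$ with $i=1$ and $j=2$, one has $\sqrt[\cC]{2}=12$. So the trunks $T_j$ with $j\in c\setminus i$ can contribute $i$, and they need not contribute exactly $c\setminus i$. Nevertheless, \eqref{eq:ftopf} holds exactly. If $i\in\sqrt[\cC]{j}$, then $\tk_\cC(ij)=\tk_\cC(j)\neq\tk_\cC(i)$, so $\sqrt[\cC]{j}=\sqrt[\cC]{ij}$ already appears in the big union. In any case, your converse only needs the two squeeze inclusions and the fact that $i\in\sqrt[\cC]{ij}$.

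Second, the strong collision lemma only gives injectivity of $f_i$. Injectivity does not by itself give $f_i^\top\circ f_i=\mathrm{id}$: for $\cC=\{1,2\}$ and $i=1$, $f_1$ is injective but $f_1^\top(f_1(\{1\}))=\varnothing$. So it is your direct argument, not the collision lemma, that proves the converse.
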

        
        \begin{proof}
            The forward implication is \cref{cor:BMFimpliesfree}.
            
            For the converse, suppose $i$ is free, that is, $\sqrt[\cC]{i} \notin \cC$. We want to show $f_i^\top(f_i(\sigma)) = \sigma$ for all $\sigma \in \cC$. So, consider some codeword $\sigma \in \cC$. If $i \notin \sigma$, then $\sigma\setminus i = \sigma$ and so by \cref{lem:squeezeandcollision}(\ref{lem:squeeze}) we have $f_i^\top(f_i(\sigma)) = \sigma$.
            
            Now suppose $i \in \sigma$.
            Since $i$ is free, we have a strict containment $\sqrt[\cC]{i} \subsetneq \sigma$, and so there is some $j \in \sigma \setminus \sqrt[\cC]{i}$.
            For this $j$, $\sqrt[\cC]{ij} \supsetneq \sqrt[\cC]{i}$; applying \cref{lem:trunkrootcontainment} we see the set
                \[\{ij \subseteq \sigma : \tk_\cC(j) \cap \tk_\cC(i) \neq \tk_\cC(i)\}\]
            is nonempty. Therefore, the big union in 
                \[ f_i^\top(f_i(\sigma)) = (\sigma \setminus i) \cup \bigcup_{\substack{ij\subseteq \sigma \\\tk_\cC(ij) \neq \tk_\cC(i)}} \sqrt[\cC]{ij} \]
            has at least one term and so $i \in f_i^\top(f_i(\sigma))$. By \cref{lem:squeezeandcollision}(\ref{lem:squeeze}), this means $f_i^\top(f_i(\sigma)) = \sigma$, completing the proof.
        \end{proof}

    \subsection{Defect of a code}\label{sec:defect}
        \Cref{lem:squeezeandcollision} actually shows that covering maps are very rigid, in some sense, 
        as composition with the adjoint must send all codewords nearly back to themselves. 
        In fact, the image of a covering map has cardinality equal to that of its domain or exactly one less. We will use this to define a weak bigrading on $\pcode$.

        For a code $\cC$, let $T(\cC)$ denote the set of nonempty trunks of $\cC$, and let $t(\cC) = |T(\cC)|$ count the number of nonempty trunks of $\cC$.
        This is a rank function on $\pcode$ (see \cite{jeffs2019sunflowers}*{Section 3}, in particular Remark 3.23): $\cC$ covers $\cD$ if and only if $\cC \geq \cD$ and $t(\cC) = t(\cD) + 1$.
        Every codeword of $\cC$ defines a distinct trunk, so we have the following lemma.

        \begin{lem}\label{lem:trunkinjection}
            For any code $\cC$, $|\cC| \leq t(\cC)$.
        \end{lem}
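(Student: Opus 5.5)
The plan is to exhibit an explicit injection $\cC \to T(\cC)$, namely $\sigma \mapsto \tk_\cC(\sigma)$. First, for each codeword $\sigma \in \cC$ the set $\tk_\cC(\sigma)$ is a trunk by definition. It is nonempty, since $\sigma \supseteq \sigma$ gives $\sigma \in \tk_\cC(\sigma)$. So the map lands in $T(\cC)$.

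Next I check injectivity using roots. By \cref{lem:root}, the nonempty trunk $\tk_\cC(\sigma)$ has root $\sqrt{\tk_\cC(\sigma)} = \bigcap_{\alpha \in \tk_\cC(\sigma)} \alpha$. Every $\alpha$ in this trunk contains $\sigma$, so the intersection contains $\sigma$. Conversely, $\sigma$ itself lies in the trunk, so the intersection is contained in $\sigma$. Hence $\sqrt[\cC]{\sigma} = \sigma$ whenever $\sigma \in \cC$.

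Therefore if $\tk_\cC(\sigma) = \tk_\cC(\sigma')$ for codewords $\sigma, \sigma'$, taking roots gives $\sigma = \sigma'$. So the map is injective and $|\cC| \leq t(\cC)$.

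There is no real obstacle here. The only point needing care is that the trunk is nonempty, so that its root is defined; this is exactly where $\sigma \in \cC$ is used.
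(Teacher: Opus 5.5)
Your proof is correct and uses the same injection $\sigma \mapsto \tk_\cC(\sigma)$ as the paper. The only difference is cosmetic. You check injectivity by recovering $\sigma$ as $\sqrt[\cC]{\sigma}$, so the root acts as a left inverse, as in \cref{lem:misctrunks}(\ref{rootedtrunk}). The paper instead argues directly that if $\sigma \neq \tau$, then one of them fails to lie in the other's trunk.
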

        \begin{proof}
            Consider $\tk_\cC$ as a map $\cC \to T(\cC)$. It is injective: if $\sigma \neq \tau$ then at least one of the two is not in the trunk of the other so $\tk_\cC(\sigma) \neq \tk_\cC(\tau)$.
        \end{proof}

        \begin{defn}\label{def:defect}
            For $\cC$ a code, define the \emph{defect} of $\cC$ by $d(\cC) = t(\cC) - |\cC|$.
        \end{defn}
        From \cref{lem:trunkinjection}, we have $d(\cC) \geq 0$ for all codes.
        In fact, $d(\cC) = 0$ if and only if $\cC$ is intersection-complete as the next proposition shows.
        \begin{prop}[\cite{jeffs2025coveringrelationsposetcombinatorial}*{Proposition 33}]\label{lem:intcompletetrunks}
            Restricted to $\cC$, $\tk_\cC$ is a bijection if and only if $\cC$ is intersection-complete.
        \end{prop}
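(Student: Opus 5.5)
We will show that the restriction $\tk_\cC|_\cC : \cC \to T(\cC)$, which is injective by \cref{lem:trunkinjection}, is surjective exactly when $\cC$ is intersection-complete. The key bookkeeping tool is the root: by \cref{lem:root}, every nonempty trunk $T$ satisfies $T = \tk_\cC(\sqrt{T})$, where $\sqrt{T}$ is the intersection of the codewords in $T$. So the question reduces to whether roots of nonempty trunks are codewords.

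For the reverse direction, suppose $\cC$ is intersection-complete and let $T \in T(\cC)$ be a nonempty trunk. Then $\sqrt{T}$ is an intersection of finitely many codewords, so $\sqrt{T} \in \cC$. By \cref{lem:root}, $T = \tk_\cC(\sqrt{T})$, so $T$ lies in the image of $\tk_\cC|_\cC$. Hence this restriction is surjective, and with injectivity it is a bijection.

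For the forward direction, suppose $\tk_\cC|_\cC$ is a bijection onto $T(\cC)$, and let $\sigma, \tau \in \cC$. The set $T = \tk_\cC(\sigma \cap \tau)$ is a trunk, and it is nonempty because it contains $\sigma$. Surjectivity gives $\gamma \in \cC$ with $\tk_\cC(\gamma) = T$. Since $\sigma \in T$ and $\tau \in T$, we have $\gamma \subseteq \sigma$ and $\gamma \subseteq \tau$, so $\gamma \subseteq \sigma \cap \tau$. Conversely, $\gamma \in \tk_\cC(\gamma) = T$, so $\gamma \supseteq \sigma \cap \tau$. Therefore $\sigma \cap \tau = \gamma \in \cC$, and $\cC$ is intersection-complete.

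No step here is a serious obstacle. The only point needing care is to check, in the forward direction, that the trunk $\tk_\cC(\sigma \cap \tau)$ is nonempty, so that it belongs to $T(\cC)$ and surjectivity applies. We also note the conventions: the empty trunk is excluded from $T(\cC)$, and the empty intersection is never formed because $\cC$ is a nonempty code whenever this question is meaningful. As an immediate consequence, $d(\cC) = 0$ if and only if $\cC$ is intersection-complete.
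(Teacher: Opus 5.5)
Your proof is correct and follows the paper's argument. For intersection-complete codes, surjectivity comes from $\sqrt{T}\in\cC$ together with $T=\tk_\cC(\sqrt{T})$. For the converse, you write $\tk_\cC(\sigma\cap\tau)=\tk_\cC(\gamma)$ and squeeze $\gamma=\sigma\cap\tau$. The only cosmetic difference is that you get $\gamma\supseteq\sigma\cap\tau$ directly from $\gamma\in\tk_\cC(\gamma)$, whereas the paper first identifies $\gamma$ with $\sqrt{T}$ via the appendix lemmas.
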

        \begin{proof}
            $(\Rightarrow)$ If $\cC$ is intersection-complete, then $\sqrt{T} \in \cC$ for any nonempty trunk $T \subseteq \cC$. 
            Thus $\tk_\cC(\sqrt{T}) = T$ meaning $\tk_\cC$ is surjective and therefore bijective.
            
            $(\Leftarrow)$ Suppose $\sigma, \tau \in \cC$. Let $T = \tk_\cC(\sigma \cap \tau)$. By assumption, $T = \tk_\cC(c)$ for some $c \in \cC$.
            By \cref{lem:misctrunks}~(\ref{rootedtrunk}), we have $c = \sqrt{T}$. But then $\sigma \cap \tau \subseteq c$ by \cref{lem:misctrunks}~(\ref{insideownroot}).
            
            On the other hand, both $\sigma \in T$ and $\tau \in T$ so $c =\sqrt{T} \subseteq \sigma \cap \tau$. Therefore $c = \sigma \cap \tau$ and so $\sigma \cap \tau \in \cC$.
        \end{proof}
        Defect is an isomorphism invariant of codes.

        \begin{lem}
            Suppose $\cC \cong \cD$ are isomorphic codes. Then $d(\cC) = d(\cD)$.
        \end{lem}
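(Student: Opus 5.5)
Since $d(\cC) = t(\cC) - |\cC|$, it suffices to show that an isomorphism $\phi: \cC \to \cD$ preserves both the number of codewords and the number of nonempty trunks. The first is immediate, because an isomorphism is by definition a bijection $\cC \to \cD$, so $|\cC| = |\cD|$. The work is in the second.

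The plan is to show that $\phi^{-1}$, i.e.\ taking preimages under $\phi$, induces a bijection between trunks of $\cD$ and trunks of $\cC$.

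\begin{enumerate}
\item Since $\phi$ is a morphism of codes, the preimage $\phi^{-1}(T)$ of any trunk $T$ of $\cD$ is a trunk of $\cC$. This gives a map $\Phi: \{\text{trunks of }\cD\} \to \{\text{trunks of }\cC\}$, $T \mapsto \phi^{-1}(T)$.
\item The inverse $\psi = \phi^{-1}$ is also a morphism, so preimages under $\psi$ of trunks of $\cC$ are trunks of $\cD$. Because $\phi$ is a bijection, $\psi^{-1}(S) = \phi(S)$. This gives a map $\Psi: S \mapsto \phi(S)$ in the other direction.
\item $\Phi$ and $\Psi$ are mutually inverse, since $\phi(\phi^{-1}(T)) = T$ and $\phi^{-1}(\phi(S)) = S$ for a bijection $\phi$.
\item Both maps preserve cardinality of subsets because $\phi$ is bijective. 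In particular they send nonempty trunks to nonempty trunks, so they restrict to a bijection $T(\cD) \to T(\cC)$, giving $t(\cC) = t(\cD)$.
\end{enumerate}

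Combining the two equalities gives $d(\cC) = t(\cC) - |\cC| = t(\cD) - |\cD| = d(\cD)$.

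The only point needing care is step 2: identifying the image of a trunk under $\phi$ as a preimage under the morphism $\phi^{-1}$. This relies on the isomorphism having a morphism as its inverse, not merely being a bijective morphism. Once that is noted, the rest is formal set theory.
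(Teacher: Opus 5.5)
Your proof is correct and follows the same approach as the paper: both reduce the claim to $|\cC| = |\cD|$ and $t(\cC) = t(\cD)$. The paper cites Jeffs for the fact that an isomorphism is a bijection on trunks. You instead verify it directly, noting that taking preimages under $\phi$ and under the morphism $\phi^{-1}$ gives mutually inverse, cardinality-preserving maps between the sets of trunks.
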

        \begin{proof}
            An isomorphism is a bijection on codewords and on trunks \cite{jeffs2019sunflowers}. Therefore $|\cC| = |\cD|$ and $t(\cC) = t(\cD)$, from which it follows $d(\cC) = d(\cD)$.
        \end{proof}
        Defect is order-preserving on $\pcode$. In fact, we show something stronger.

        \begin{thm}\label{thm:bigrading}
            Let $\cC$ be a code and $i$ not redundant in $\cC$. Then 
            \[d(\cC) - 1 \leq d(\cC^{(i)}) \leq d(\cC).\]
        \end{thm}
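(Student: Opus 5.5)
The plan is to compute both ingredients of the defect, the number of nonempty trunks and the number of codewords, and track how each changes under the covering map $f_i$. By definition $d(\cC^{(i)}) - d(\cC) = \bigl(t(\cC^{(i)}) - t(\cC)\bigr) - \bigl(|\cC^{(i)}| - |\cC|\bigr)$. I treat the two differences separately, first reducing to the case $\cC$ reduced. Defect is an isomorphism invariant, and the reduction morphism $\rho$ is an isomorphism. Presumably the $i$-th covered code of a non-reduced code is defined after reduction, via the neuron corresponding to $i$; this is where the hypothesis that $i$ is not redundant enters, since then $i$ survives in $\rho(\cC)$. So we may assume $\cC$ is reduced, which makes the Jeffs covering result and \cref{lem:squeezeandcollision} available.

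For the trunk count, $\cC$ covers $\cC^{(i)}$ (the proposition from \cite{jeffs2019sunflowers}). Since $t$ is a rank function on $\pcode$ with covering relations exactly the steps with $t(\cC) = t(\cD)+1$, we get $t(\cC^{(i)}) = t(\cC) - 1$.

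For the codeword count, $f_i$ is surjective onto $\cC^{(i)}$. By the strong collision lemma \cref{lem:squeezeandcollision}(\ref{lem:strongcollision}), the only possible collision is between $\sqrt[\cC]{i}\setminus i$ and $\sqrt[\cC]{i}$, and it happens exactly when both of these are codewords of $\cC$. Hence $|\cC^{(i)}| = |\cC| - \epsilon$ with $\epsilon \in \{0,1\}$, and this pair is the only one that can collide. Substituting,
\[ d(\cC^{(i)}) = t(\cC) - 1 - |\cC| + \epsilon = d(\cC) - 1 + \epsilon, \]
which lies in $\{d(\cC)-1, d(\cC)\}$ and proves the theorem. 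As a byproduct, the defect drops exactly when $f_i$ is injective, consistent with \cref{thm:coveringBMF}: a free $i$ gives $\sqrt[\cC]{i}\notin\cC$, so no collision occurs and the defect decreases.

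The main obstacle is the reduction step, in particular verifying that the strong collision lemma's ``if and only if'' is used correctly. The statement of that lemma quantifies over codewords, so a collision requires both $\sqrt[\cC]{i}$ and $\sqrt[\cC]{i}\setminus i$ to lie in $\cC$. Only the inequality $\epsilon \le 1$ is needed, and that follows from uniqueness of the colliding pair. I would also check that the trivial neurons $T_i = T_{n+i} = \varnothing$ in the image do not change the count of nonempty trunks; this holds because trivial neurons only generate the empty trunk, which $t$ excludes.
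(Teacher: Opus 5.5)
Your proposal is correct and is essentially the paper's proof: $t$ drops by exactly one because $\cC$ covers $\cC^{(i)}$, and $|\cC|$ drops by $0$ or $1$ by the strong collision lemma (\cref{lem:squeezeandcollision}), which gives $d(\cC^{(i)}) = d(\cC)-1+\epsilon$ with $\epsilon\in\{0,1\}$. Your one addition is the reduction to reduced $\cC$, which the paper skips by silently applying reduced-code results; since \cref{def:excessivecovering} defines $\cC^{(i)}$ for any code, that step actually needs $\cC^{(i)}\cong\rho(\cC)^{(i)}$ (true for nontrivial $i$, because the two defining trunk families have the same intersections up to empty or full trunks), while a trivial non-redundant $i$ makes $f_i$ an isomorphism rather than a covering, and there the inequality holds trivially.
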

        
        \begin{proof}
            The strong collision lemma (\cref{lem:squeezeandcollision}(\ref{lem:strongcollision})) states that a covering map is either injective (so $|\cC^{(i)}| = |\cC|$), or the image has cardinality one less than the domain (so $|\cC^{(i)}| = |\cC| - 1$).
            Since it's a covering map, $t(\cC^{(i)}) = t(\cC) - 1$.
            Putting this together, we have
            \begin{align*}
                d(\cC^{(i)}) & = t(\cC^{(i)}) -|\cC^{(i)}|\\
                    & = (t(\cC) - 1) - (|\cC| - x)\\
                    & = d(\cC) - 1 + x
            \end{align*}
            where $x = 0$ or $1$.
            Therefore, $d(\cC) - 1 \leq d(\cC^{(i)}) \leq d(\cC)$.
        \end{proof}
        \begin{cor}
            Let $f: \cC \to \cD$ be a surjective morphism of codes.
            Then $d(\cC) \geq d(\cD)$.
        \end{cor}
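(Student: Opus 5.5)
The plan is to reduce a surjective morphism to a chain of covering maps and apply \cref{thm:bigrading} at each step. Since $f$ is surjective, $\cD \leq \cC$ in $\pcode$. Because $t$ is a rank function on $\pcode$, the interval between $\cD$ and $\cC$ is finite. If $\cD \cong \cC$, then defect is an isomorphism invariant and we are done. Otherwise we pick a saturated chain
\[ \cD \cong \cE_0 < \cE_1 < \cdots < \cE_m \cong \cC, \]
in which each $\cE_{k+1}$ covers $\cE_k$. Such a chain exists because $t$ strictly increases along strict relations and is bounded by $t(\cC)$. It then suffices to show that $d(\cE_{k+1}) \geq d(\cE_k)$ whenever $\cE_{k+1}$ covers $\cE_k$, and to chain these inequalities.

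For a single covering relation, replace $\cE_{k+1}$ by its reduced representative $\rho(\cE_{k+1})$ using \cref{rmk:reductionmorphism}; this does not change the defect. The key input is the classification of covering relations from \cite{jeffs2019sunflowers}*{Theorem 3.20} and the surrounding discussion. Every code covered by a reduced code $\cC'$ should be isomorphic to some $\cC'^{(i)}$ with $i$ a neuron of $\cC'$, which is non-redundant since $\cC'$ is reduced. Granting this, \cref{thm:bigrading} gives $d(\cC'^{(i)}) \leq d(\cC')$, and isomorphism invariance of $d$ transfers the inequality to $\cE_k$ and $\cE_{k+1}$.

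The main obstacle is the claim that every cover of a reduced code arises as an $i$-th covered code. \Cref{thm:bigrading} only treats the specific maps $f_i$. If that completeness statement is not available from \cite{jeffs2019sunflowers}, I would avoid it with a direct argument. The argument needs two facts:
\begin{itemize}
\item $|\cD| \leq |\cC|$, since $f$ is surjective;
\item $t(\cC) - t(\cD) \geq |\cC| - |\cD|$.
\end{itemize}
For the second, the map $T \mapsto f^{-1}(T)$ is an injection $T(\cD) \to T(\cC)$, because it preserves nonemptiness under surjectivity and is injective since $f(f^{-1}(T)) = T$. Its image therefore misses at least $|\cC| - |\cD|$ trunks of $\cC$.

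To prove that last count, I would exhibit $|\cC| - |\cD|$ distinct nonempty trunks of $\cC$ that are not preimages. For each $d \in \cD$ with fibre $f^{-1}(d)$ of size $s_d$, the trunks $\tk_\cC(\sigma)$ for $\sigma \in f^{-1}(d)$ are pairwise distinct by \cref{lem:trunkinjection}. I claim at most one of them is a preimage trunk. A preimage trunk containing $\sigma$ contains the whole fibre $f^{-1}(d)$. So $\tk_\cC(\sigma)$ can be a preimage trunk for at most one $\sigma$ in the fibre: if two distinct elements $\sigma, \sigma'$ of the fibre both had this property, then $\sigma' \in \tk_\cC(\sigma)$ and $\sigma \in \tk_\cC(\sigma')$, giving $\sigma = \sigma'$.

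It also has to be checked that non-preimage trunks coming from different fibres are distinct, which holds since the trunks are indexed by distinct codewords. This produces $\sum_d (s_d - 1) = |\cC| - |\cD|$ non-preimage trunks, hence $d(\cC) \geq d(\cD)$ directly. I expect this direct count to be the cleanest route, with the covering-chain argument as a fallback.
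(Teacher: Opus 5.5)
Your direct counting argument is correct and complete, and it takes a different route from the paper. The paper gives no separate proof of this corollary. It is stated right after \cref{thm:bigrading}, and the intended argument is your first one: factor the surjection through a saturated chain of covers and apply \cref{thm:bigrading} at each step. As you noticed, that route silently needs every cover of a reduced code to be isomorphic to some $\cC^{(i)}$. The paper only quotes the opposite direction from Jeffs, namely that each $\cC^{(i)}$ is covered by $\cC$. In addition, \cref{thm:bigrading} rests on the strong collision lemma, which concerns only the specific maps $f_i$.

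Your second argument avoids all of this. It uses two facts. First, $T \mapsto f^{-1}(T)$ is an injection $T(\cD) \to T(\cC)$. Second, every preimage trunk is a union of fibres, so in each fibre at most one codeword (necessarily the minimum of the fibre) can have its $\tk_\cC$ equal to a preimage trunk. Together these give $t(\cC) - t(\cD) \geq |\cC| - |\cD|$ directly, which is exactly $d(\cC) \geq d(\cD)$.

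It also proves more than the corollary. Since $t$ drops by exactly one along a cover, your inequality forces $|\cC| - |\cD| \in \{0,1\}$ for every covering relation. That recovers the two inequalities of \cref{thm:bigrading} for arbitrary covers, not just for the maps $f_i$.

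What the paper's route provides instead is structural information. The strong collision lemma identifies which pair of codewords can collide under $f_i$: $\sqrt[\cC]{i}\setminus i$ and $\sqrt[\cC]{i}$. Consequently a free neuron $i$ forces $f_i$ to be injective and hence to lower defect by exactly one. This is what links defect to the free/rooted dichotomy and to the BMF characterization in \cref{thm:coveringBMF}.
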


        Every isomorphism class of codes can thus be assigned a pair of coordinates $(d(\cC), t(\cC))$; covering maps change the coordinates by either  $(0,-1)$ or $(-1,-1)$. Cardinality is only preserved in the $(-1,-1)$ direction, which means that any bijectiive morphism of codes (other than an isomorphism) must decrease defect.

        \begin{cor}\label{cor:BMFdefect}
            If $f: \cC \to \cD$ is a BMF, then either it is an isomorphism or it strictly decreases defect.
        \end{cor}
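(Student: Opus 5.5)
The approach is to compare the two ingredients of the defect, $t(\cdot)$ and $|\cdot|$, separately along $f$. Being a BMF controls cardinality, and $t$ being a rank function on $\pcode$ controls the trunk count. Throughout, $\cD$ is the image of $f$, as in \cref{def:morphismadjoint}, so $f$ is surjective.

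First I will show that a BMF is injective on codewords. By definition $f^\top\circ f = \mathrm{id}_\cC$. So if $f(\sigma)=f(\tau)$ for $\sigma,\tau\in\cC$, then $\sigma = f^\top(f(\sigma)) = f^\top(f(\tau)) = \tau$. Since $f$ is surjective onto $\cD$, this gives $|\cD| = |\cC|$.

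Second I will compare trunk counts. Since $f$ is surjective, $\cD \leq \cC$ in $\pcode$. Suppose $f$ is not an isomorphism. The key point is that then $\cC\not\cong\cD$. If some isomorphism $\cC\to\cD$ existed, it would give $t(\cC)=t(\cD)$. I will show directly that $t(\cD)=t(\cC)$ forces $f$ itself to be an isomorphism, and this single argument handles both issues. Pulling back trunks gives a map $T(\cD)\to T(\cC)$, $S\mapsto f^{-1}(S)$. This map is injective because $f$ is surjective, so $f(f^{-1}(S))=S$. Hence $t(\cD)\le t(\cC)$. If equality holds, every nonempty trunk of $\cC$ is the preimage of a trunk of $\cD$. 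Then the bijection $f$ maps trunks of $\cC$ onto trunks of $\cD$, so $f^{-1}$ also pulls trunks back to trunks (using that $f(f^{-1}(S))=S$ and that $f$ is bijective). Thus $f^{-1}$ is a morphism and $f$ is an isomorphism. Consequently, if $f$ is not an isomorphism, then $t(\cD)\le t(\cC)-1$. Alternatively, one can cite that $t$ is a rank function on $\pcode$ (\cite{jeffs2019sunflowers}*{Remark 3.23}): a strict relation $\cD<\cC$ is realized by a saturated chain of covers, each of which lowers $t$ by exactly one.

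Combining the two steps, when $f$ is not an isomorphism,
\[ d(\cD) = t(\cD)-|\cD| \le t(\cC)-1-|\cC| = d(\cC)-1. \]
This is the claimed strict decrease.

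The main obstacle is the second step: ruling out that a bijective morphism which is not an isomorphism could still preserve the trunk count. I handle this with the pullback injection on trunks, plus the observation that equality of trunk counts makes $f^{-1}$ trunk-preserving. If that argument proves delicate, I will fall back on the rank-function property of $t$, which is stated earlier in the paper.
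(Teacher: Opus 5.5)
Your proof is correct and follows the paper's route: $|\cD| = |\cC|$ because $f^\top\circ f = \mathrm{id}_\cC$ makes $f$ injective, and $t(\cD) < t(\cC)$ unless $f$ is an isomorphism, so $d(\cD) < d(\cC)$. The only difference is that you prove the trunk-count step yourself, where the paper simply cites \cite{jeffs2019sunflowers}*{Proposition 3.16} for ``equal trunk counts implies isomorphism.'' Your direct argument is the injectivity of $S \mapsto f^{-1}(S)$ on nonempty trunks, together with the observation that, when $t(\cD)=t(\cC)$, the bijection $f$ sends trunks to trunks.
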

        \begin{proof}
            Suppose $f:\cC\to\cD$ is a BMF, so in particular it is a surjective morphism of codes.
            If $t(\cC) = t(\cD)$, then $f$ is an isomorphism \cite{jeffs2019sunflowers}*{Proposition 3.16}.            
            Otherwise, we must have $t(\cC) > t(\cD)$. Since $|\cC| = |\cD|$ we have $d(\cC) = t(\cC) - |\cC| > t(\cD) - |\cD| = d(\cD)$.
        \end{proof}

    \subsection{Another Galois Connection}
        We note that \cref{lem:intcompletetrunks} is suggestive: given a fixed code $\cC$, the map $\sigma \mapsto \tk_\cC(\sigma)$ defines a map from $2^{[n]}$ to its power set. In the oppposite direction we have the map $S \mapsto \sqrt{S}$. In fact, these maps form a Galois connection as well.
        \begin{lem}
            Let $\cC\subseteq 2^{[n]}$ be fixed. Then the maps $G: S \mapsto \sqrt{S}$ and $F: \sigma \mapsto \tk_\cC(\sigma)$ are the upper and lower adjoints, respectively, in a Galois connection between $2^\cC$ and $2^{[n]}$.
        \end{lem}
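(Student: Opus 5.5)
The plan is to verify the defining equivalence of a Galois connection directly from the definitions, as in \cref{lem:galoisconnection}, after fixing the orders correctly. Both maps reverse inclusion: enlarging $\sigma$ shrinks $\tk_\cC(\sigma)$, and enlarging $S$ shrinks $\sqrt{S}$. So with plain inclusion on both sides this is an antitone Galois connection. I will therefore order $2^{[n]}$ by $\subseteq$ and $2^\cC$ by reverse inclusion $\supseteq$. With these orders the required statement is
\[ F(\sigma) \leq S \iff \sigma \leq G(S), \quad\text{that is,}\quad S \subseteq \tk_\cC(\sigma) \iff \sigma \subseteq \sqrt{S}, \]
for all $\sigma \subseteq [n]$ and $S \subseteq \cC$.

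Both sides unwind to the same statement, namely that every $\alpha \in S$ satisfies $\alpha \supseteq \sigma$. If $S \subseteq \tk_\cC(\sigma)$, then every $\alpha \in S$ contains $\sigma$, so $\sigma \subseteq \bigcap_{\alpha\in S}\alpha = \sqrt{S}$. Conversely, if $\sigma \subseteq \sqrt{S}$, then for each $\alpha \in S$ we have $\sigma \subseteq \sqrt{S} \subseteq \alpha$, and $\alpha \in \cC$, so $\alpha \in \tk_\cC(\sigma)$. This is the same argument as \cref{lem:root}, and I would cite that lemma where convenient.

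The only real obstacle is bookkeeping at the boundary, since \cref{def:root} defines $\sqrt{S}$ only for nonempty $S$. I will extend the definition by the standard convention $\sqrt{\varnothing} = [n]$, the empty intersection inside the ambient set $2^{[n]}$. With this convention the case $S = \varnothing$ holds automatically: $\varnothing \subseteq \tk_\cC(\sigma)$ is always true, and so is $\sigma \subseteq [n]$. I will also note the alternative formulation: ordering both sides by inclusion, the statement $S \subseteq F(\sigma) \iff \sigma \subseteq G(S)$ is the usual ``polarity'', that is, an antitone Galois connection. From it, $\sqrt{\tk_\cC(\sigma)} = \sqrt[\cC]{\sigma}$ and $\tk_\cC(\sqrt{S})$ are the corresponding closure operators. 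This explains the suggestion drawn from \cref{lem:intcompletetrunks}.
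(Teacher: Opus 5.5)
Your proof is correct and is essentially the paper's argument: both directions unwind to the equivalence $S \subseteq \tk_{\cC}(\sigma) \iff \sigma \subseteq \sqrt{S}$, exactly as in the paper's two-line proof. Your explicit choice of reverse inclusion on $2^{\cC}$ (needed because both maps are order-reversing) and the convention $\sqrt{\varnothing} = [n]$ make precise what the paper leaves implicit. Indeed, the paper's first line writes the hypothesis as $\tk_{\cC}(\sigma) \subseteq S$, but the inference it draws only follows from $S \subseteq \tk_{\cC}(\sigma)$.
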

        \begin{proof}
            Suppose $\tk_\cC(\sigma) \subseteq S$. Then every codeword in $S$ contains $\sigma$, and so $\sigma \subseteq \sqrt{S}$.

            Suppose $\sigma \subseteq \sqrt{S}$. Then $\sigma \subseteq s$ for all $s \in S$, and so each $s\in S$ is also in $\tk_\cC(\sigma)$.
        \end{proof}
        \Cref{lem:intcompletetrunks} then gives the condition under which these maps, restricted to $\cC$ and $T(\cC)$, are mutual inverses (namely, when $\cC$ is intersection-complete, which is true if and only if $d(\cC) = 0$).

\section{Boolean rank}\label{sec:booleanrank}

    We now show an application of our results to computing the Boolean rank of a matrix.
    A rank $r$ factoriziation of a matrix $\bC \in \B^{m\times n}$ is an expression of $\bC$ as the Boolean product of matrices $\bC = \bV\bH$ with $\bV\in \B^{m\times r}$ and $\bH \in \B^{r\times n}$. The Boolean rank of $\bC$, denoted $\brank(\bC)$, is the minimum $r$ for which there is a rank $r$ factorization of $\bC$.
    For a code $\cC \subseteq 2^{[n]}$, we define $\brank(\cC)$ as the Boolean rank of any matrix representing $\cC$.
    The following small example shows that Boolean rank is not an isomorphism invariant of combinatorial codes.
    \begin{ex}\label{ex:isomorphicnotbrank}
        The following matrices represent isomorphic codes in $\code$:
        \[ \bC = \begin{bmatrix}
            0&0\\
            1&0\\
            0&1\\
            1&1
        \end{bmatrix} \qquad
        \bC' = \begin{bmatrix}
            0&0&0\\
            1&0&0\\
            0&1&0\\
            1&1&1
        \end{bmatrix}\]
        In $\bC'$, neuron $3$ is redundant to $\{1,2\}$; deleting this redundant neuron yields $\bC$. By inspection, $\brank(\bC) = 2$ while $\brank(\bC') = 3$.
    \end{ex}
    
    However, it is true that the reduced representative(s) of an isomorphism class have the smallest Boolean rank of codes in that class.
    \begin{lem}\label{lem:reducedminimumrank}
        For any code $\bC$, $\brank(\rho(\bC)) \leq \brank(\bC)$.
    \end{lem}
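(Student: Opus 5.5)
By \cref{lem:rhoisprojection}, $\rho(\cC)$ is represented by $\bC\bP$, where $\bP$ is a projection matrix. Concretely, $\bP$ is the $n\times n'$ Boolean matrix that selects the surviving columns: for each column $k$ of $\rho(\cC)$ there is an original neuron $\pi(k)$ such that $\bP_{\pi(k),k}=1$ and the rest of column $k$ of $\bP$ is zero.

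The plan is to push any factorization of $\bC$ through $\bP$. Let $r=\brank(\bC)$ and choose a Boolean factorization $\bC=\bV\bH$ with $\bV\in\B^{m\times r}$ and $\bH\in\B^{r\times n}$. Boolean matrix multiplication is associative, since $\B$ is a semiring. Hence
\[ \bC\bP = (\bV\bH)\bP = \bV(\bH\bP), \]
and $\bH\bP\in\B^{r\times n'}$. This exhibits a rank $r$ factorization of a matrix representing $\rho(\cC)$, so $\brank(\rho(\cC))\le r$.

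One bookkeeping point needs checking: $\brank$ of a code should not depend on which representing matrix is used. Two matrices representing the same code (same neuron labels) differ only by reordering rows and repeating rows. A factorization $\bV\bH$ transfers to the other matrix by permuting or duplicating the rows of $\bV$, and $r$ does not change, so Boolean rank is well defined here. The remaining freedom in $\rho$, namely a permutation of neurons, is handled the same way: it amounts to right multiplication by a permutation matrix, which is absorbed into $\bH\bP$.

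I do not expect a real obstacle. The only step needing care is confirming that $\bP$ is a genuine Boolean column-selection matrix, so that $\bC\bP$ is literally the submatrix of $\bC$ on the kept columns. This holds because deleting trivial and redundant neurons only removes columns. If the proof of \cref{lem:rhoisprojection} is deemed too terse, I would instead argue directly: column deletion from $\bC=\bV\bH$ equals deleting the same columns of $\bH$, giving the factorization $\bV\bH'$ with $\bH'$ the corresponding column-submatrix of $\bH$.
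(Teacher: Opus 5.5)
Your proposal is correct and is essentially the paper's proof: take an optimal factorization $\bC=\bV\bH$, use \cref{lem:rhoisprojection} to write $\rho(\bC)=\bC\bP$, and regroup as $\bV(\bH\bP)$ by associativity. Your additional checks (that $\bP$ only selects columns, and that Boolean rank does not depend on row order or repeated rows) are valid and only make explicit what the paper leaves implicit.
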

    \begin{proof}
        Suppose $\bC = \bV\bH$ is a rank $r = \brank(\bC)$ factorization of $\bC$. By \cref{lem:rhoisprojection} there is a $\bP$ such that
        \[ \rho(\bC) = \bC\bP = (\bV\bH)\bP = \bV(\bH\bP)\]
        which is a rank $r$ factorization of $\rho(\bC)$.
    \end{proof}

    Taken together, our results suggest a graph search algorithm for factoring a reduced Boolean matrix: First, enumerate the free neurons of code $\bC$. For each free neuron $i$, compute a reduced representative of covered code $\bC^{(i)}$, and then recursively search. The graph we are searching is the Hasse diagram of $\pcode$, restricted to the edges which represent matrix factorizations, part of which is illustrated in \cref{fig:pcode3BMF}.
    The following lemma shows that determining if a neuron $i$ is redundant can be done in time linear in the number of entries in the matrix representing $\cC$ (and thus, for a fixed $n$, linear in the size of the code).

    \begin{lem}[Redundancy Lemma]\label{lem:redundancylemma}
        If $i$ is redundant to $\sigma$, then $i$ is redundant to $\tau$ for any $\tau \in \gens(\tk_\cC(\sigma))$ with $\tau \subseteq \sigma$.
        In particular, neuron $i$ is redundant if and only if $\tk_\cC(i) = \tk_\cC(\sqrt[\cC]{i} \setminus i)$.
    \end{lem}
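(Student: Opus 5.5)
The first claim reduces to the fact that a trunk depends only on the trunk it generates. If $i$ is redundant to $\sigma$, then by definition $\sigma \subseteq [n]\setminus i$ and $\tk_\cC(i) = \tk_\cC(\sigma)$. Take $\tau \in \gens(\tk_\cC(\sigma))$ with $\tau \subseteq \sigma$. Then $\tk_\cC(\tau) = \tk_\cC(\sigma) = \tk_\cC(i)$, and $\tau \subseteq \sigma \subseteq [n]\setminus i$. So $i$ is redundant to $\tau$, and the first sentence follows directly from the definitions.

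For the ``in particular'', the reverse direction is immediate. If $\tk_\cC(i) = \tk_\cC(\sqrt[\cC]{i}\setminus i)$, then $i$ is redundant to $\sqrt[\cC]{i}\setminus i$, which is a subset of $[n]\setminus i$.

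For the forward direction, suppose $i$ is redundant to some $\sigma \subseteq [n]\setminus i$. I plan to sandwich the trunk of $\sqrt[\cC]{i}\setminus i$. Set $T = \tk_\cC(i) = \tk_\cC(\sigma)$.
\begin{enumerate}
\item If $T$ is empty, then $\sqrt[\cC]{i}$ is undefined. In that case I either restrict attention to nontrivial $i$, where redundancy is what matters, or adopt the convention $\sqrt{\varnothing} = [n]$, under which the equality still holds because $\tk_\cC([n]\setminus i)\subseteq \tk_\cC(\sigma) = \varnothing$.
\item If $T$ is nonempty, then $\sigma \in \gens(T)$, so \cref{lem:root} gives $\sigma \subseteq \sqrt{T} = \sqrt[\cC]{i}$. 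Since $i\notin\sigma$, this sharpens to $\sigma \subseteq \sqrt[\cC]{i}\setminus i$.
\end{enumerate}
Trunks reverse inclusion, so in the nonempty case
\[ \tk_\cC(\sqrt[\cC]{i}\setminus i) \subseteq \tk_\cC(\sigma) = \tk_\cC(i). \]
For the opposite inclusion, every codeword in $\tk_\cC(i)$ contains $\sqrt[\cC]{i}$ by the definition of the root, and hence contains $\sqrt[\cC]{i}\setminus i$. Therefore $\tk_\cC(i) \subseteq \tk_\cC(\sqrt[\cC]{i}\setminus i)$, and equality holds.

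The proof is essentially bookkeeping. The only delicate points are the empty-trunk convention for the root and remembering that $i\notin\sigma$, which is what allows $\sigma$ to be placed inside $\sqrt[\cC]{i}\setminus i$ rather than merely inside $\sqrt[\cC]{i}$.

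The linear-time claim in the surrounding text then follows from the criterion. Computing $\sqrt[\cC]{i}$ is one pass over the rows with $c_i=1$, intersecting them. Checking the trunk equality is a second pass, so the whole test is linear in the size of the matrix.
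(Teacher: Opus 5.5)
Your proof is correct and follows the same route as the paper. The first claim is read off from the definitions. The forward direction of the criterion uses that $\sqrt[\cC]{i}$ is the largest generator of $\tk_\cC(i)$, so $\sigma \subseteq \sqrt[\cC]{i}\setminus i \subseteq \sqrt[\cC]{i}$ and the trunks are sandwiched; this is exactly what the paper's terse assertion $\sqrt[\cC]{i}\setminus i \in \gens(\tk_\cC(\sigma))$ compresses. You also treat the trivial-neuron case explicitly, where $\sqrt[\cC]{i}$ is undefined in the paper, which the paper's proof omits.
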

    
    \begin{proof}
        The meaning of $\tau \in \gens(\tk_\cC(\sigma))$ is that $\tk_\cC(\tau) = \tk_\cC(\sigma) = \tk_\cC(i)$. If $\tau \subseteq \sigma \subseteq [n] \setminus i$, then $i$ is redundant to $\tau$.
        
        Clearly, if $\tk_\cC(i) = \tk_\cC(\sqrt[\cC]{i} \setminus i)$ then $i$ is redundant, so the only thing we need to check is the converse. Suppose $i$ is redundant to $\sigma$, with $\sigma \subseteq [n] \setminus i$. Then we have $\tk_\cC(i) = \tk_\cC(\sigma) = \tk_\cC(\sigma\cup i)$, and so in particular $\sqrt[\cC]{i} \setminus i \in \gens(\tk_\cC(\sigma))$. 
    \end{proof}

    Redundant neurons also bound Boolean rank of a matrix relative to the Boolean rank of a reduced representative of that matrix.
    \begin{lem}\label{lem:redundancybound}
        Suppose $\bC$ is a code with $k$ redundant neurons. Then $\brank(\bC) \leq \brank(\rho(\bC)) + k$.
    \end{lem}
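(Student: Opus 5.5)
We build a rank factorization of $\bC$ directly out of one for its reduced representative, adding one rank-one piece for each redundant neuron. By \cref{rmk:reductionmorphism}, $\rho$ first deletes trivial neurons and then deletes the $k$ redundant neurons one at a time. Since the bound is additive, it suffices to prove two one-step statements and then induct:
\begin{itemize}
    \item deleting a trivial neuron does not change Boolean rank;
    \item if neuron $i$ is redundant in $\bC$ and $\bC'$ is obtained by deleting column $i$, then $\brank(\bC) \leq \brank(\bC') + 1$.
\end{itemize}
The trivial case is immediate: a zero column can be appended to $\bH$ (or deleted from it) without changing $\bV$.

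For the redundant case, let $r = \brank(\bC')$ and take a rank $r$ factorization $\bC' = \bV\bH'$. Column $i$ of $\bC$, call it $\bc_i \in \B^m$, must now be produced. We enlarge the factorization by one rank:
\begin{itemize}
    \item append $\bc_i$ as a new $(r+1)$-st column of $\bV$;
    \item form $\bH$ from $\bH'$ by inserting a zero column in position $i$, and append a new row $e_i$, the standard basis vector supported at $i$.
\end{itemize}
Then $[\bV \mid \bc_i]\,\bH$ has the columns of $\bV\bH' = \bC'$ in positions other than $i$, because the new row contributes only in column $i$. In column $i$ it has $\bc_i$, because the old rows contribute zero there. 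Hence $\bC = [\bV\mid \bc_i]\bH$ is a rank $r+1$ factorization.

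Notice that this step never uses redundancy: it shows that appending any column raises Boolean rank by at most one. Redundancy matters only in that it guarantees the deletions performed by $\rho$ reach $\rho(\bC)$ after exactly $k$ redundant-column deletions. Here one must check that the count $k$ of redundant neurons of $\bC$ is consistent with the number of deletions $\rho$ actually performs. Deleting one redundant neuron may make others non-redundant, for example when two columns are equal, so $\rho$ performs at most $k$ such deletions.

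The induction then runs along the sequence of deletions defining $\rho$, giving $\brank(\bC) \leq \brank(\rho(\bC)) + (\text{number of redundant deletions}) \leq \brank(\rho(\bC)) + k$. The main point needing care is exactly this bookkeeping: confirming that the number of redundant deletions in $\rho$ is at most the number of redundant neurons of the original $\bC$. Trivial neurons are not counted in $k$ but cost nothing, and a neuron that is redundant at some stage of the reduction was already redundant in $\bC$.
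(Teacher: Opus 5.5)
Your proof is correct and is essentially the paper's argument. The paper appends all $k$ extra columns of $\bC$ to $\bV$ at once and uses the block matrix $\begin{bmatrix}\bH & \mathbf{0}\\ \mathbf{0} & \bI\end{bmatrix}$. That is exactly your one-column step iterated, and like yours it never uses redundancy. Your bookkeeping — trivial columns cost nothing, and $\rho$ performs at most $k$ redundant deletions because any neuron redundant at an intermediate stage was already redundant in $\bC$ — fills in a point the paper glosses over. The paper tacitly assumes $\rho(\bC)$ is exactly the first $n-k$ columns of $\bC$.
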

    \begin{proof}
        Without loss of generality, assume neurons $n-k+1,\dots,n$ are redundant in $\bC$. Suppose $\rho(\bC) = \bV\bH$ is a rank $r = \brank(\rho(\bC))$ factorization of $\rho(\bC)$. Let $\bC'$ be the last $k$ columns of $\bC$, and set
        \[\bV' = \begin{bmatrix}
            \bV & \bC'
        \end{bmatrix}
        \quad \text{and}\quad
        \bH' = \begin{bmatrix}
            \bH & \mathbf{0}\\
            \mathbf{0} & \bI
        \end{bmatrix}\]
        where $\bI$ is a $k \times k$ identity matrix. Then we have $\bC = \bV'\bH'$ is a rank $r + k$ factorization of $\bC$.
    \end{proof}

        Our results also show that intersection-complete codes must be full Boolean rank.

        \begin{thm}\label{thm:intcompletefullrank}
            If $\cC = \{\sigma_1,\dots,\sigma_m\} \subseteq 2^{[n]}$ is intersection-complete, then $\brank(\cC) = \min(m,n)$
        \end{thm}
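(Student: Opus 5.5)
The plan is to turn any Boolean factorization of $\bC$ into a BMF in the sense of \cref{def:morphismadjoint}, and then use the defect to show that this BMF must be an isomorphism. The upper bound $\brank(\cC)\le\min(m,n)$ is trivial (factor as $\bI\bC$ or $\bC\bI$), so only the lower bound needs work.

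\textbf{A hypothesis that is needed.} The bound fails without reducedness. For example, $\{\varnothing,12,34,1234\}$ is intersection-complete with $m=n=4$, yet its Boolean rank is $2$: rows $12$ and $34$ generate everything, and neurons $2$ and $4$ are redundant. So I would prove the statement for reduced $\cC$, which I take to be the intended reading (compare \cref{lem:reducedminimumrank}). For reduced codes I first check that $n\le m$. Every neuron $i$ is nontrivial, and since $\cC$ is intersection-complete, $\sqrt[\cC]{i}\in\cC$, i.e.\ every $i$ is rooted. The map $i\mapsto\sqrt[\cC]{i}$ is injective: if $\sqrt[\cC]{i}=\sqrt[\cC]{j}$ with $i\ne j$, then $\tk_\cC(i)=\tk_\cC(\sqrt[\cC]{i})=\tk_\cC(j)$, so $i$ would be redundant to $j$. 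Hence $\min(m,n)=n$, and it remains to show $r\ge n$ for every rank $r$ factorization $\bC=\bV\bH$.

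\textbf{From a factorization to a BMF.} Given $\bC=\bV\bH$ with $\bH\in\B^{r\times n}$, I replace $\bV$ by $\bC:\bH$. By Plemmons' observation after \cref{lem:Hmaximalfactorizationmorphism}, $\bC=(\bC:\bH)\bH$ still holds. So $G_\bH$ restricted to $\cC$ is a morphism of codes onto its image $\cD\subseteq 2^{[r]}$, and $F_\bH\circ G_\bH$ is the identity on $\cC$; in particular $G_\bH|_\cC$ is injective and $|\cD|=|\cC|$. By the remarks following \cref{lem:adjointiscanonical}, this is a BMF $\cC\to\cD$.

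\textbf{Forcing an isomorphism.} Now apply \cref{cor:BMFdefect}: a BMF is either an isomorphism or strictly decreases defect. Since $\cC$ is intersection-complete, \cref{lem:intcompletetrunks} gives $d(\cC)=0$, and $d(\cD)\ge 0$ by \cref{lem:trunkinjection}, so strict decrease is impossible. Thus $\cC\cong\cD$. By \cref{rmk:reductionmorphism}, isomorphic codes have reduced representatives that agree up to a permutation of neurons. Since $\cC$ is already reduced, $\rho(\cD)$ has exactly $n$ neurons. Reduction only deletes neurons and $\cD\subseteq 2^{[r]}$, so $n\le r$, which is the required bound.

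\textbf{Expected obstacle.} The main technical point is the middle step: checking that $G_\bH$, built from an arbitrary $\bH$ that need not be the canonical representative, really yields a BMF in the sense needed by \cref{cor:BMFdefect}. I would address this by passing to the canonical representative of $G_\bH|_\cC$. Its rows are $\sqrt{T_j}\supseteq\eta_j$, with $T_j$ the preimage of the $j$th simple trunk, and by monotonicity together with the squeeze $F_\bH(G_\bH(\sigma))\le\sigma$, the canonical adjoint still returns each $\sigma$. A secondary point is that any trivial neurons of $\cD$, arising from empty $T_j$, must be discarded before representatives are defined. This only lowers the count of neurons and so does not affect the inequality $n\le r$.
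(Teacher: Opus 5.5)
Your argument is correct and is the paper's own: $d(\cC)=0$ together with \cref{cor:BMFdefect} forces every BMF out of $\cC$ to be an isomorphism. Your counterexample is also valid (already $\{\varnothing,12\}$ is intersection-complete with $m=n=2$ but Boolean rank $1$), so the reducedness hypothesis you add is genuinely needed.

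The paper's one-line proof leaves implicit the passage from an arbitrary factorization to the bijective morphism $G_\bH|_\cC$ via $\bC:\bH$, and it ignores the isomorphism alternative of \cref{cor:BMFdefect} entirely. Your use of uniqueness of reduced representatives to turn $\cC\cong\cD$ into $r\ge n$ is exactly the missing step. Your canonical-representative check is harmless but optional, since \cref{cor:BMFdefect} only uses that the morphism is bijective.
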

        \begin{proof}
            By \cref{cor:BMFdefect}, if $\cC \to \cC'$ is a Boolean matrix factorization, we must have the strict inequality $d(\cC') < d(\cC)$.
            However, since $\cC$ is intersection-complete, $d(\cC) = 0$.
        \end{proof}

        \noindent Determining if $\bC$ is intersection-complete can be accomplished by inspecting the generators of $I_\bC$, per \cref{lem:intgens}.

        Finally, we can use the neural ring to bound Boolean rank from below.
        We say $R_\bC$ is \emph{generated by $r$ monomials} if there is a surjective monomial map $\F_2[x_1,\dots,x_r] \to R_\bC$. The smallest $r$ such that $R_\bC$ is generated by $r$ monomials is the \emph{monomial rank of $R_\bC$}, denoted $\mrank(R_\bC)$.

        \begin{thm}\label{thm:monomialrankbound}
            For any code $\bC$, $\mrank(\bC) \leq \brank(\bC)$.
        \end{thm}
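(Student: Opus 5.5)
Given a rank $r=\brank(\bC)$ factorization $\bC=\bV\bH$ with $\bH\in\B^{r\times n}$, I will produce a surjective monomial map $\F_2[y_1,\dots,y_r]\to R_\bC$. The natural candidate uses the columns of $\bV$. Let $\bV_a$ be the $a$th column of $\bV$. View it as a function on $\cC$: the row of $\bC$ that is $\bc=\bv\bH$ is sent to $v_a$. The first thing to check is that this is well-defined, i.e.\ that two equal rows of $\bC$ give equal rows of $\bV$. This can fail for an arbitrary factorization. So I will first replace $\bV$ by the $\bH$-maximal matrix $\bC:\bH=G_\bH(\bC)$. By the result of Plemmons quoted after \cref{lem:Hmaximalfactorizationmorphism} (equivalently \cref{prop:imagefixedpoints}), we still have $\bC=(\bC:\bH)\bH$, and now the rows of $\bV$ are a function of the rows of $\bC$. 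Moreover, by \cref{lem:Hmaximalfactorizationmorphism} the map $g=G_\bH|_\cC:\cC\to\cV$, with $\cV$ the code of rows of $\bV$, is a morphism of codes.

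Next I consider the pullback $g^*:R_\cV\to R_\cC$, which is the monomial map $y_a\mapsto x^{\eta_a}$. This map goes the wrong way for a surjection onto $R_\cC$, so instead I use the other adjoint. The map $\cV\to\cC$, $\bv\mapsto\bv\bH$, is surjective onto $\cC$ because $\bC=\bV\bH$. I claim it is a morphism of codes when restricted to $\cV$, with representative $\tau_j=\{a\in[r]: h_{aj}=1\}$ read as a product. This is the key point: on $\cV$ we need $c_j=\bigvee_a v_a h_{aj}$ to equal a product $\prod_{a\in\tau}v_a$. That is false in general, since an OR is not an AND.

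So I will abandon a monomial map directly from $R_\cV$ and argue with functions instead. The real target is a surjection $\F_2[y_1,\dots,y_r]\to R_\bC$ sending each $y_a$ to a monomial. For this I need $R_\bC$ to be generated as an $\F_2$-algebra by $r$ monomials in the $x_j$. The candidates are $m_a=x^{\eta_a}$, which on $\cC$ equals $v_a=G_\bH(\bc)_a$. It then suffices to show that each coordinate function $x_j$ lies in the subalgebra generated by $m_1,\dots,m_r$. On $\cC$ we have
\[ x_j=\bigvee_{a:h_{aj}=1} m_a = 1-\prod_{a:h_{aj}=1}(1-m_a), \]
which is a polynomial in the $m_a$ over $\F_2$. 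Hence every $x_j$, and so all of $R_\bC$, lies in the image of $y_a\mapsto x^{\eta_a}$. This map is therefore a surjective monomial map, giving $\mrank(R_\bC)\le r$.

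The main obstacle is the identity $x_j=\bigvee_a h_{aj}m_a$ on $\cC$. It rests on the factorization $\bC=(\bC:\bH)\bH$ from \cref{prop:imagefixedpoints} and Plemmons' lemma, applied row by row. This is exactly where $\bH$-maximality is needed: we must use $m_a=x^{\eta_a}$ evaluated at $\bc$, and not the original, possibly non-maximal $\bV$, whose entries need not be functions of $\bc$ at all.
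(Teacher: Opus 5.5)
Your proof is correct and is essentially the paper's. The paper uses the same map $y_a\mapsto x^{\eta_a}$, written as the pullback $g^*:R_\cV\to R_\cC$ of the bijective morphism $g=G_\bH|_\cC$ composed with $\F[y_1,\dots,y_r]\to R_\cV$. It deduces surjectivity from the injectivity of $g$, which rests on the same fact $\bc=(\bc:\bH)\bH$ that you use, whereas you verify surjectivity explicitly by writing $x_j=1-\prod_{a:h_{aj}=1}(1-x^{\eta_a})$ on $\cC$.

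One remark in your middle paragraph is mistaken: $g^*$ does not ``go the wrong way.'' It lands in $R_\cC$ and is exactly the map your final argument ends up using, so the detour through the other adjoint $\cV\to\cC$ can simply be deleted.
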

        \begin{proof}
            Let $\bC = \bV\bH$ be a rank-$r$ factorization of $\bC$ such that $\bV$ is $\bH$-maximal. By \cref{lem:Hmaximalfactorizationmorphism}, the map $\bC \to \bV$ is a bijective morphism of codes, so its pullback $R_\bV \to R_\bC$ is surjective. Thus composition with the natural map $\F[x_1,\dots,x_r] \to R_\bV$ yields a surjection $\F[x_1,\dots,x_r] \to R_\bC$.
        \end{proof}


\section{Discussion and Open Questions}\label{sec:conclusion}

Our work places codes and morphisms of codes on the same footing, by representing both with binary matrices.
A combinatorial code can be seen as an equivalence class of matrices, where matrices are equivalent if the set of supports of the rows of the two matrices produce isomorphic combinatorial codes.
However, different representative matrices of morphisms typically yield distinct combinatorial codes (recall \cref{ex:nonuniquemonomialmap}), so morphisms of codes induce a different partition on the set of binary matrices.
Future work might investigate what properties of matrices are invariant in these equivalence classes.

Our work complements the recent work of Jeffs and Trang~\cite{jeffs2025coveringrelationsposetcombinatorial}. They study the codes that cover a given code, giving the complementary operation to \cref{def:excessivecovering}. A natural question to ask is whether there is an analogue of \cref{thm:coveringBMF} for their construction: That is, which codes covering a given code can be obtained by matrix multiplication?

They also showed that passing to the intersection completion is an endofunctor on the category of codes, and that this functor preserves covering relations in $\pcode$~\cite{jeffs2025coveringrelationsposetcombinatorial}*{Lemmas 13 and 14}. 
We can describe this pictorially by saying that every edge in \cref{fig:pcode3} can be ``translated'' to the left, giving an edge of the Hasse diagram in the $d = 0$ column.
A covering relation between intersection-complete codes cannot be a BMF (\cref{cor:BMFdefect}). However, is there any property of the codes or the morphism which guarantees that one of the edges that translates to it is a BMF?

The problem of finding a low-rank Boolean matrix factorization is equivalent to a number of problems, such as finding a small cover of a bipartite graph by bicliques or factorizing a general relation between two sets.
We offer one more equivalent formulation: finding the minimal number of variables in a polynomial ring that a given monomial map factors through (see \cref{sec:pullback}).

We conclude with some specific questions and conjectures that have arisen over the course of our work.

\subsection{Interest in intersection completion.}
    In a preprint version of \cite{curto2019neuralringhomomorphisms}, the authors present a streaming algorithm to construct the canonical form of a code $\cC$. Given an ordering of the codewords $\sigma_1,\dots,\sigma_m$, for each $i = 1,\dots,m$ it constructs the canonical form of the subset $\cC_i = \{\sigma_1,\dots,\sigma_i\} \subseteq\cC$. 
    Thus, if the elements that violate \cref{lem:intgens} (respectively, \cref{lem:uniongens}) are discarded at each intermediate step, we obtain at each step the canonical form of $\widehat{\cC_i}$ (resp., $\check{\cC}_i$), and the output of the algorithm is the canonical form of $\widehat{\cC}$ (resp., $\check{\cC}$).
    The size of the canonical form can grow very large for arbitrary codes, so a natural question is how quickly the size of the canonical form of an intersection-complete (union-complete) code can grow.

    \begin{question}
        What is the largest number of elements in the canonical form of an intersection complete or union-complete code on $n$ neurons?
    \end{question}
    \noindent This is posed as an extremal problem, but there is an accompanying enumerative problem: For a given size of canonical form, how many codes on $n$ neurons achieve a canonical form of that size? 

\subsection{Properties of Defect}
    Let $\lambda(\cC)$ denote the miniminum neuron number of a code, the smallest $n$ such that $\cC$ is isomorphic to a code $\cC' \subseteq 2^{[n]}$.
    \Cref{fig:pcode3} illustrates the down-set of $\pcode$ generated by all codes with $\lambda = 3$.
    Surprisingly, this includes some codes with $\lambda = 4$, showing that $\lambda$ is not order-preserving on $\pcode$.
    \begin{question}
        What is $\max_{\cC \in \code} \{\lambda(\cC^{(i)}) - \lambda(\cC)\}$?
    \end{question}
    From \cref{def:excessivecovering} we see that $\lambda(\cC^{(i)}) \leq 2\lambda(\cC) - 2$. Does any code achieve this bound? Moreover, we see from \cref{fig:pcode3} that $24$ of the $82$ codes in the down-set generated by $\lambda = 3$ have $\lambda = 4$. What is the largest number of codes of length $\ell + 1$ in the down-set generated by codes of length $\ell$?

\subsection{Relevance of redundancy}
    Adding a repeated row to a matrix does not change the corresponding code and does not change the Boolean rank.
    Adding a repeated column or a column of all $1$s likewise does not change the Boolean rank, but it does introduce a redundant neuron to the code. 
    However, \cref{ex:isomorphicnotbrank} shows that it is possible to increase the Boolean rank of a matrix by adding a column equal to a product of other columns but not duplicating an existing column.
    Thus, we can distinguish between \emph{simply redundant} neurons (a neuron which is redundant to a singleton set or the empty set) and \emph{non-simple redundancies} (a neuron which is redundant but not simply redundant). 
    The bound in \cref{lem:redundancybound} could therefore be improved by replacing $k$ by the number of simply redundant neurons, but
    we conjecture that \cref{lem:reducedminimumrank,lem:redundancybound} can be made exact.

    \begin{conj}
        Let $\bC$ be a binary matrix with $k$ redundant neurons of which $\ell \leq k$ are not simply redundant. Then $\brank(\bC) = \brank(\rho(\bC)) + \ell$.
    \end{conj}

    A related question is what is the largest $\ell$ can be. We note that adding non-simple redundancies to a code amounts to adding intersections of the supports of columns of a representative matrix, and so there is an upper bound of $\ell \leq 2^{\lambda(\bC)} - 1$, as this counts all nonempty subsets of distinct non-redundant columns. We expect this bound is rarely achieved among codes, and any progress on bounding $\ell$ in turn bounds the gap between $\brank(\bC)$ and $\brank(\rho(\bC))$.

\bibliographystyle{amsalpha}
\bibliography{BMFandNeuralRings}

@book {Bergman,
    AUTHOR = {Bergman, George M.},
     TITLE = {An invitation to general algebra and universal constructions},
    SERIES = {Universitext},
   EDITION = {Second},
 PUBLISHER = {Springer, Cham},
      YEAR = {2015},
     PAGES = {x+572},
      ISBN = {978-3-319-11477-4; 978-3-319-11478-1},
   MRCLASS = {08-01 (00A05 18Axx 20Fxx)},
  MRNUMBER = {3309721},
       DOI = {10.1007/978-3-319-11478-1},
       URL = {https://doi-org.libproxy.unl.edu/10.1007/978-3-319-11478-1},
}

@article{cruz2019open,
	author = {Cruz, Joshua and Giusti, Chad and Itskov, Vladimir and Kronholm, Bill},
	date = {2019/03/01},
	doi = {10.1007/s00454-018-00050-1},
	id = {Cruz2019},
	isbn = {1432-0444},
	journal = {Discrete \& Computational Geometry},
	number = {2},
	pages = {247--270},
	title = {On Open and Closed Convex Codes},
	url = {https://doi.org/10.1007/s00454-018-00050-1},
	volume = {61},
	year = {2019}}

@article{curto2017whatconvex,
author = {Curto, Carina and Gross, Elizabeth and Jeffries, Jack and Morrison, Katherine and Omar, Mohamed and Rosen, Zvi and Shiu, Anne and Youngs, Nora},
title = {What Makes a Neural Code Convex?},
journal = {SIAM Journal on Applied Algebra and Geometry},
volume = {1},
number = {1},
pages = {222-238},
year = {2017},
doi = {10.1137/16M1073170},
URL = {       https://doi.org/10.1137/16M1073170}
}

@article{curto2019algebraic,
	author = {Carina Curto and Elizabeth Gross and Jack Jeffries and Katherine Morrison and Zvi Rosen and Anne Shiu and Nora Youngs},
	doi = {https://doi.org/10.1016/j.jpaa.2018.12.012},
	issn = {0022-4049},
	journal = {Journal of Pure and Applied Algebra},
	number = {9},
	pages = {3919-3940},
	title = {Algebraic signatures of convex and non-convex codes},
	url = {https://www.sciencedirect.com/science/article/pii/S0022404918303025},
	volume = {223},
	year = {2019}}

@article{neuralring13,
	author = {Curto, Carina and Itskov, Vladimir and Veliz-Cuba, Alan and Youngs, Nora},
	doi = {10.1007/s11538-013-9860-3},
	fjournal = {Bulletin of Mathematical Biology},
	issn = {0092-8240},
	journal = {Bull. Math. Biol.},
	number = {9},
	pages = {1571--1611},
	title = {The neural ring: an algebraic tool for analyzing the intrinsic structure of neural codes},
	url = {https://doi.org/10.1007/s11538-013-9860-3},
	volume = {75},
	year = {2013}}

@incollection {curto2019neuralringhomomorphisms,
    AUTHOR = {Curto, Carina and Youngs, Nora},
     TITLE = {Neural ring homomorphisms and maps between neural codes},
 BOOKTITLE = {Topological data analysis---the {A}bel {S}ymposium 2018},
    SERIES = {Abel Symp.},
    VOLUME = {15},
     PAGES = {163--180},
 PUBLISHER = {Springer, Cham},
      YEAR = {[2020] \copyright 2020},
      ISBN = {978-3-030-43407-6; 978-3-030-43408-3},
   MRCLASS = {13P25 (52A37 92B20)},
     DOI = {10.1007/978-3-030-43408-3\_7},
       URL = {https://doi.org/10.1007/978-3-030-43408-3_7},
}

@article {Geller2024,
    AUTHOR = {Geller, Hugh and R. G., Rebecca},
     TITLE = {Canonical forms of neural ideals},
   JOURNAL = {Matematica},
  FJOURNAL = {La Matematica. Official Journal of the Association for Women
              in Mathematics},
    VOLUME = {3},
      YEAR = {2024},
    NUMBER = {2},
     PAGES = {721--752},
      ISSN = {2730-9657},
MRREVIEWER = {Eduardo\ S\'aenz-de-Cabez\'on},
       DOI = {10.1007/s44007-024-00105-1},
       URL = {https://doi.org/10.1007/s44007-024-00105-1},
}

@article{jeffs2019sunflowers,
	author = {R. Amzi Jeffs},
	doi = {https://doi.org/10.1016/j.aam.2019.101935},
	issn = {0196-8858},
	journal = {Advances in Applied Mathematics},
	pages = {101935},
	title = {Sunflowers of convex open sets},
	url = {https://www.sciencedirect.com/science/article/pii/S0196885819301198},
	volume = {111},
	year = {2019}}

@article{morphisms,
author = {Jeffs, R. Amzi},
title = {Morphisms of Neural Codes},
journal = {SIAM Journal on Applied Algebra and Geometry},
volume = {4},
number = {1},
pages = {99-122},
year = {2020},
doi = {10.1137/18M1205509},
URL = { https://doi.org/10.1137/18M1205509},
}

@phdthesis{jeffs2021phd,
    author = {R. Amzi Jeffs},
    title = {Morphisms, Minors, and Minimal Obstructions to Convexity of
Neural Codes},
    school = {University of Washington},
    year = {2021}
}

@misc{jeffs2025coveringrelationsposetcombinatorial,
      title={Covering Relations in the Poset of Combinatorial Neural Codes}, 
      author={R. Amzi Jeffs and Trong-Thuc Trang},
      year={2025},
     eprint={2512.04241},
      archivePrefix={arXiv},
      primaryClass={math.CO},
      note ={available at \url{https://arxiv.org/abs/2512.04241}},
}

@article{luce1952note,
 ISSN = {00029939, 10886826},
 URL = {http://www.jstor.org/stable/2031888},
 author = {R. Duncan Luce},
 journal = {Proceedings of the American Mathematical Society},
 number = {3},
 pages = {382--388},
 publisher = {American Mathematical Society},
 title = {A Note on Boolean Matrix Theory},
 urldate = {2026-01-26},
 volume = {3},
 year = {1952}
}

@article{plemmons1971generalized,
 ISSN = {00361399},
 URL = {http://www.jstor.org/stable/2099964},
 author = {R. J. Plemmons},
 journal = {SIAM Journal on Applied Mathematics},
 number = {3},
 pages = {426--433},
 publisher = {Society for Industrial and Applied Mathematics},
 title = {Generalized Inverses of Boolean Relation Matrices},
 urldate = {2026-01-26},
 volume = {20},
 year = {1971}
}

\appendix
\section{Trunk Lemmas}\label{sec:trunklemmas}
    We state here several technical lemmas concerning trunks of a code.

    \begin{lem}\label{lem:rootsofintersections}
        Let $S,T$ be two trunks of a code $\cC$. Then $\sqrt{S \cap T} \supseteq \sqrt{S} \cup \sqrt{T} \supseteq \sqrt{S} \cap \sqrt{T}$.
    \end{lem}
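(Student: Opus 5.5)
The plan is to reduce both containments to the definition of the root as the intersection of all members of a set, together with the elementary fact that the root reverses inclusion. That is, if $\varnothing \neq A \subseteq B$ then $\sqrt{B} \subseteq \sqrt{A}$, since $\bigcap_{\beta\in B}\beta$ is an intersection over more sets than $\bigcap_{\alpha\in A}\alpha$. Since $\sqrt{\cdot}$ is only defined for nonempty sets (\cref{def:root}), I will assume $S \cap T \neq \varnothing$, which is the setting in which the lemma is used, for instance in the proof of \cref{lem:bmfredundancy}. Should $S\cap T$ be empty, one could instead adopt the convention $\sqrt{\varnothing} = [n]$ (the empty intersection inside $2^{[n]}$), and the statement holds trivially.

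For the first containment, I note that $S \cap T \subseteq S$ and $S \cap T \subseteq T$. By inclusion reversal this gives $\sqrt{S} \subseteq \sqrt{S\cap T}$ and $\sqrt{T} \subseteq \sqrt{S\cap T}$, and taking the union yields $\sqrt{S}\cup\sqrt{T} \subseteq \sqrt{S\cap T}$. Equivalently, I can argue elementwise: every codeword $\sigma\in S\cap T$ lies in $S$, so it contains $\sqrt{S}$; it also lies in $T$, so it contains $\sqrt{T}$. Hence $\sigma$ contains their union, and so does the intersection of all such $\sigma$.

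The second containment, $\sqrt{S}\cup\sqrt{T} \supseteq \sqrt{S}\cap\sqrt{T}$, is immediate set theory and needs no argument beyond a remark. Neither step uses that $S$ and $T$ are trunks; the result holds for arbitrary nonempty subsets of $\cC$.

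There is no real obstacle here. The only point needing care is the nonemptiness convention for roots, which I would state explicitly at the start.
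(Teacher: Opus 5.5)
Your proposal is correct: the paper states this lemma in the appendix with no proof, and your inclusion-reversal argument (for nonempty $S\cap T$, since $\varnothing\neq S\cap T\subseteq S$ gives $\sqrt{S}\subseteq\sqrt{S\cap T}$, and likewise for $T$) is exactly the reasoning the paper leaves implicit. The same argument also gives $\sqrt{\bigcap_{j\in\sigma}T_j}\supseteq\bigcup_{j\in\sigma}\sqrt{T_j}$ for any nonempty intersection of several trunks, which is the form the paper actually invokes in the proof of \cref{lem:bmfredundancy}.
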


    \begin{lem}\label{lem:trunkrootcontainment}
        Let $\cC \subseteq 2^{[n]}$.
        Then $\tk_\cC(i) \cap \tk_\cC(j) \neq \tk_\cC(i)$ if and only if $\sqrt[\cC]{ij} \supsetneq \sqrt[\cC]{i}$. (Equivalently, $\tk_\cC(i) \cap \tk_\cC(j) = \tk_\cC(i)$ iff $\sqrt[\cC]{ij} = \sqrt[\cC]{i}$.)
    \end{lem}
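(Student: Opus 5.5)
We prove the equivalent form: $\tk_\cC(i)\cap\tk_\cC(j) = \tk_\cC(i)$ if and only if $\sqrt[\cC]{ij} = \sqrt[\cC]{i}$. The tool is the Galois connection between $\sigma\mapsto\tk_\cC(\sigma)$ and $S\mapsto\sqrt{S}$ established at the end of \cref{sec:adjoints}. Throughout we use $\tk_\cC(ij) = \tk_\cC(i)\cap\tk_\cC(j)$, which is immediate from the definition of trunks. We also first dispose of the degenerate case $\tk_\cC(i)=\varnothing$. There the left-hand condition holds trivially, and both roots are empty intersections; under the convention that the root of the empty family is the ambient set $[n]$, they agree. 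We handle this case by that convention and otherwise assume $\tk_\cC(i)\neq\varnothing$.

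The containment is automatic. Since $\tk_\cC(ij)\subseteq\tk_\cC(i)$, an intersection over a smaller family is larger, so $\sqrt[\cC]{ij}\supseteq\sqrt[\cC]{i}$ always holds. It therefore suffices to show that equality of the trunks is equivalent to equality of the roots.

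For the forward direction, if $\tk_\cC(ij)=\tk_\cC(i)$, then taking roots of both sides gives $\sqrt[\cC]{ij}=\sqrt[\cC]{i}$ immediately.

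For the converse, suppose $\sqrt[\cC]{ij}=\sqrt[\cC]{i}$. Since $\tk_\cC(ij)$ is nonempty (this needs a check, see below), its root contains $j$: every codeword in it contains $j$. Hence $j\in\sqrt[\cC]{i}$, so every codeword of $\tk_\cC(i)$ contains $j$. This gives $\tk_\cC(i)\subseteq\tk_\cC(j)$, and therefore $\tk_\cC(i)\cap\tk_\cC(j)=\tk_\cC(i)$.

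The only real obstacle is the case $\tk_\cC(ij)=\varnothing$ while $\tk_\cC(i)\neq\varnothing$. Then $\sqrt[\cC]{ij}$ is the empty intersection, which under the convention equals $[n]$ (it can also be read as all of $\N$). Since $\tk_\cC(i)$ is nonempty, $\sqrt[\cC]{i}$ is a finite codeword intersection strictly contained in $[n]$ whenever $\tk_\cC(i)\neq\tk_\cC([n])$. The one remaining edge case is $\sqrt[\cC]{i}=[n]$, that is, $\tk_\cC(i)=\{[n]\}$. But then $j\in[n]$ lies in every codeword of $\tk_\cC(i)$, so $\tk_\cC(ij)$ is nonempty, contradicting our assumption. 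So in this case the roots differ strictly, and the trunks differ as well, consistent with the claim. With the nonempty case settled by the Galois-connection argument above, the lemma follows.
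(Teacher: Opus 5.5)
Your proof is correct. It shares the paper's skeleton: $\sqrt[\cC]{ij}\supseteq\sqrt[\cC]{i}$ is automatic, so the real content is that equal roots force equal trunks. You justify that key step more elementarily than the paper does. The paper appeals to \cref{lem:root}: the root of a nonempty trunk is its maximal generator, so $T=\tk_\cC(\sqrt{T})$, and the strictly smaller trunk $\tk_\cC(ij)\subsetneq\tk_\cC(i)$ must have a strictly larger root. You use only $ij\subseteq\sqrt[\cC]{ij}$ (part (1) of \cref{lem:misctrunks}) together with the observation that $j\in\sqrt[\cC]{i}$ forces $\tk_\cC(i)\subseteq\tk_\cC(j)$. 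In effect this proves by hand the half of $T=\tk_\cC(\sqrt{T})$ that is needed. The paper's route is shorter once its lemma is available; yours is self-contained.

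Two side remarks. First, the relation your key step actually uses is the order-reversing one, $S\subseteq\tk_\cC(\sigma)\iff\sigma\subseteq\sqrt{S}$. The lemma at the end of \cref{sec:adjoints} that you cite as your tool is phrased as a monotone adjunction, and no step of yours invokes it, so drop that citation.

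Second, your treatment of empty trunks goes beyond the paper. The paper defines $\sqrt{S}$ only for nonempty $S$ and tacitly assumes $\tk_\cC(ij)\neq\varnothing$, which holds in all its applications because there $ij$ lies in a codeword. However, under your own convention that the root of the empty family is $[n]$, you still have $j\in\sqrt[\cC]{ij}$ when $\tk_\cC(ij)=\varnothing$, since $j\in[n]$. The implication $j\in\sqrt[\cC]{i}\Rightarrow\tk_\cC(i)\subseteq\tk_\cC(j)$ also holds vacuously when $\tk_\cC(i)=\varnothing$. So your converse argument covers every case uniformly, and the separate case analysis in your first and last paragraphs is unnecessary.
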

    \begin{proof}
        $(\Rightarrow)$ Suppose $\tk_\cC(i) \cap \tk_\cC(j) \neq \tk_\cC(i)$. Since the intersection on the lefthand side is always a subset of the righthand side, the inequality implies it is a strict subset. This implies that the maximal generator of the LHS (which is $\sqrt[\cC]{ij}$) strictly contains the maximal generator of the RHS (which is $\sqrt[\cC]{i}$).
        
        $(\Leftarrow)$ The strict containment $\sqrt[\cC]{ij} \supsetneq \sqrt[\cC]{i}$ implies the strict containment $\tk_\cC(ij) \subsetneq \tk_\cC(i)$, and the LHS is precisely $\tk_\cC(i) \cap \tk_\cC(j)$.
    \end{proof}

    \begin{lem}[The obvious lemmas]\label{lem:misctrunks}
        Let $\cC \subseteq 2^{[n]}$ be a code.
        \begin{enumerate}
            \item\label{insideownroot} For any $\sigma \subseteq [n]$, $\sigma \subseteq \sqrt[\cC]{\sigma}$.
            \item For any nonempty trunk $T \subseteq \cC$, $T = \tk_\cC(\sqrt{T})$.
            \item\label{rootedtrunk} For any $\sigma \in \cC$, $\sigma = \sqrt[\cC]{\sigma}$.
        \end{enumerate}
    \end{lem}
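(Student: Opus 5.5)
Each of the three statements in \cref{lem:misctrunks} comes straight from the definitions of trunk and root. I would check them in order, feeding the earlier ones into the later ones.

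For (\ref{insideownroot}), fix $\sigma \subseteq [n]$. If $\tk_\cC(\sigma)$ is nonempty, then every $\alpha \in \tk_\cC(\sigma)$ satisfies $\alpha \supseteq \sigma$ by the definition of a trunk. Hence $\sigma \subseteq \bigcap_{\alpha \in \tk_\cC(\sigma)} \alpha = \sqrt[\cC]{\sigma}$. The root is only defined for nonempty sets in \cref{def:root}, so I would note that the statement is meant for $\sigma$ with $\tk_\cC(\sigma) \neq \varnothing$. Alternatively, one can adopt the convention that the empty intersection is $[n]$ (or the ambient ground set), in which case the containment holds trivially. This degenerate case is the only point needing any care here.

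For the second statement, take a nonempty trunk $T$. By definition $T = \tk_\cC(\tau)$ for some $\tau \subseteq [n]$. \Cref{lem:root} then says that $\gens(T)$ has unique maximal element $\bigcap_{\alpha\in T}\alpha = \sqrt{T}$. In particular $\sqrt{T} \in \gens(T)$, which is exactly the statement $\tk_\cC(\sqrt{T}) = T$. To keep the argument self-contained, I would also give the two inclusions directly:
\begin{itemize}
\item every $\alpha \in T$ contains $\sqrt{T}$, so $T \subseteq \tk_\cC(\sqrt{T})$;
\item conversely, $\tau \subseteq \sqrt{T}$ (every $\alpha\in T$ contains $\tau$), so $\tk_\cC(\sqrt{T}) \subseteq \tk_\cC(\tau) = T$, since trunks are inclusion-reversing in the generating set.
\end{itemize}

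For (\ref{rootedtrunk}), let $\sigma \in \cC$. Then $\sigma \in \tk_\cC(\sigma)$, so this trunk is nonempty and its root is defined. Because $\sigma$ is one of the sets being intersected, $\sqrt[\cC]{\sigma} \subseteq \sigma$. Part (\ref{insideownroot}) gives the reverse containment $\sigma \subseteq \sqrt[\cC]{\sigma}$, and so the two are equal.

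None of these steps is a real obstacle; each is a one-line unwinding of definitions. The only thing I would be careful about is the nonemptiness hypothesis, so that every root that appears is well defined.
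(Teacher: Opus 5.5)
Your proof is correct and follows the paper's argument essentially line for line. Part (1) intersects over the trunk. Part (2) uses the two inclusions, with $\tau \subseteq \sqrt{T}$ being exactly part (1) applied to a generator. Part (3) combines $\sigma \in \tk_\cC(\sigma)$ with part (1). Your remark that $\sqrt[\cC]{\sigma}$ is only defined when $\tk_\cC(\sigma) \neq \varnothing$ (or else requires the convention that the empty intersection is $[n]$) is a worthwhile precision that the paper's proof of part (1) leaves implicit.
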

    \begin{proof}
        (1) Since $\sigma \subseteq \tau$ for all $\tau \in \tk_\cC(\sigma)$, it is contained in their intersection.

        (2) Let $\sigma \in \gens(T)$. From part 1, $\sigma \subseteq \sqrt[\cC]{\sigma}$, and therefore $T = \tk_\cC(\sigma) \supseteq \tk_\cC(\sqrt[\cC]{\sigma})$. On the other hand, the statement $T \subseteq \tk_\cC(\sqrt{T})$ is immediate.

        (3) Since $\sigma \in \cC$, we have $\sigma \in \tk_\cC(\sigma)$ and therefore $\sigma \supseteq \sqrt[\cC]{\sigma}$. From part 1, $\sigma \subseteq \sqrt[\cC]{\sigma}$.
    \end{proof}

\section{All isomorphism classes on at most three neurons}\label{sec:isomorphismclasses}
    \Cref{fig:pcode3,fig:pcode3BMF} illustrate the downset in $\pcode$ generated by codes with minimum neuron number 3. Note that this downset includes some codes with minimum neuron number 4, and in fact there are examples of codes of minimum neuron number 3 covering a code of length 4 such that the covering map is a Boolean matrix factorization.

    \begin{figure}[b]
        \centering
        \includegraphics[width=0.75\linewidth]{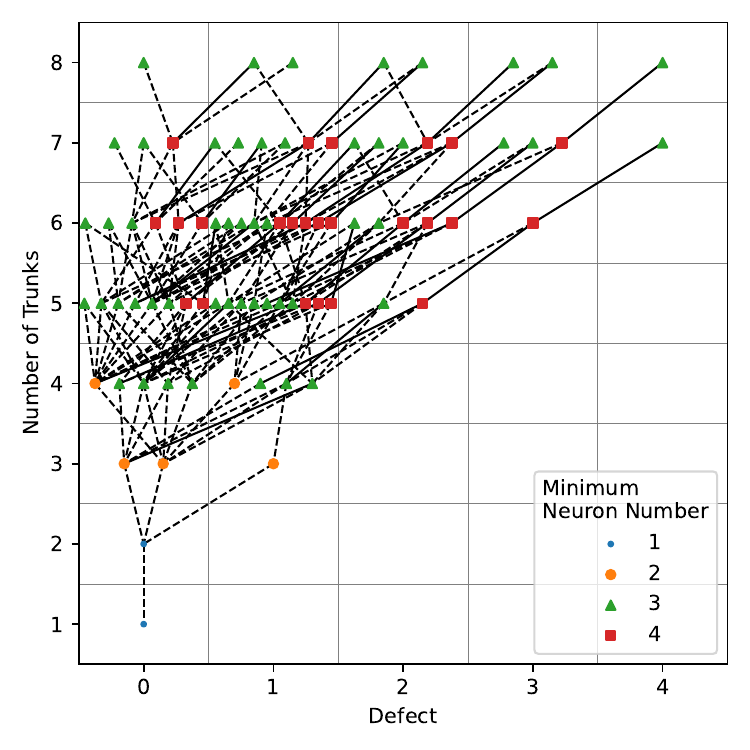}
        \caption{The Hasse diagram of the downset in $\pcode$ generated by all codes of minimum neuron number 3.
        Each gray grid square contains all codes with that combination of trunk number and defect. Solid lines indicate covering relations which are matrix factorizations.}
        \label{fig:pcode3}
    \end{figure}

    \begin{figure}[b]
        \centering
        \includegraphics[width=0.75\linewidth]{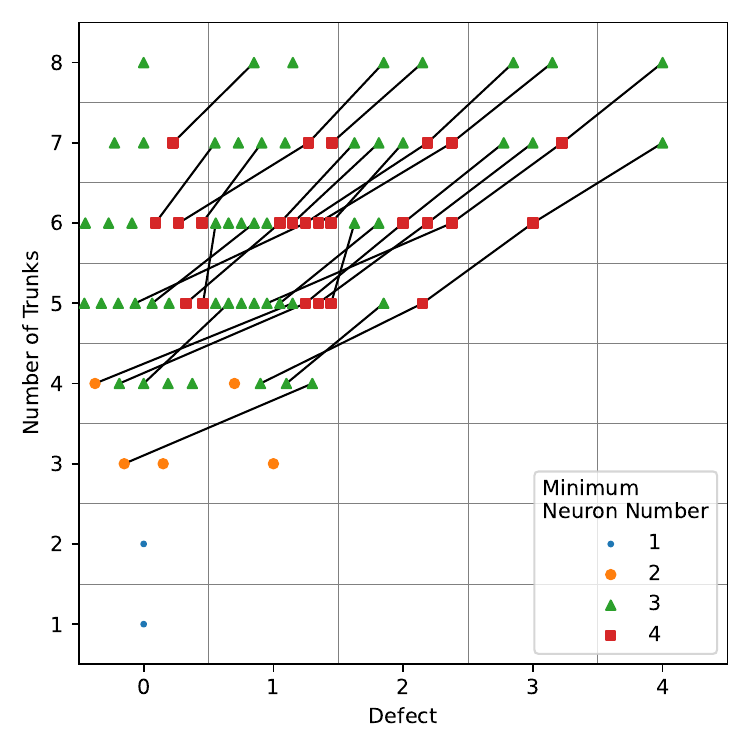}
        \caption{The subset of edges in \cref{fig:pcode3} that correspond to matrix factorizations.}
        \label{fig:pcode3BMF}
    \end{figure}
\end{document}